\newcommand{\tikzbox}[1]{%
  \begin{tikzpicture}%
    \node[draw,shape=rectangle] at (0,0) {$#1$};%
  \end{tikzpicture}%
}
\makeatletter \define@key{meshkeys}{u}{\def\myu{#1}}
\makeatletter \define@key{meshkeys}{u}{\def\myu{#1}}
\makeatletter \define@key{meshkeys}{u}{\def\myu{#1}}
\DeclareMathOperator{\add}{add} 
 \DeclareMathOperator{\End}{End}
\DeclareMathOperator{\Ext}{Ext} \DeclareMathOperator{\Hom}{Hom}
\DeclareMathOperator{\coh}{coh}
\DeclareMathOperator{\vect}{vect} 
\DeclareMathOperator{\qgr}{qgr} \DeclareMathOperator{\gr}{gr}
\DeclareMathOperator{\slope}{slope} \DeclareMathOperator{\rk}{rk}
\DeclareMathOperator{\lcm}{\mathrm{lcm}}
\newcommand{\cf}{\emph{cf.}~} \newcommand{\ie}{\emph{i.e.}~}
\newcommand{\NN}{\mathbb{N}} 
\newcommand{\ZZ}{\mathbb{Z}}
\newcommand{\QQ}{\mathbb{Q}}
\newcommand{\xto}{\xrightarrow}
\newcommand{\set}[1]{\left\{#1\right\}}
\newcommand{\setP}[2]{\set{#1\mid#2}}
\newcommand{\cat}[1]{\mathcal{#1}} 
\newcommand{\C}{\cat{C}}
\newcommand{\M}{\mathcal{M}}
\newcommand{\T}{\mathcal{T}}
\newcommand{\K}{\mathrm{K}}
\renewcommand{\mod}{\mathrm{mod}\,} \newcommand{\op}{\mathrm{op}}
\newcommand{\Db}{\mathrm{D}^\mathrm{b}}
\newcommand{\OO}{\cat{O}} 
\newcommand{\HH}{\cat{H}} 
\newcommand{\A}{\cat{H}} 
\newcommand{\LL}{\mathbb{L}}
\renewcommand{\P}{\mathbb{P}} 
\newcommand{\XX}{\mathbb{X}}
\newcommand{\YY}{\mathbb{Y}}
\newcommand{\cohX}{\coh\XX} 
\newcommand{\vecx}{\vec{x}}
\newcommand{\vecy}{\vec{y}} 
\newcommand{\vecz}{\vec{z}}
\newcommand{\vecc}{\vec{c}}
\newcommand{\vecw}{\vec{\omega}}
\newcommand{\p}{\mathbf{p}} 
\newcommand{\blambda}{\bm{\lambda}}
\DeclareMathOperator{\Jac}{Jac} 
\newcommand{\Q}{\tilde{Q}}
\numberwithin{figure}{section} \numberwithin{table}{section}
\newtheorem{theorem}{Theorem}[section]
\newtheorem{proposition}[theorem]{Proposition}
\newtheorem{lemma}[theorem]{Lemma}
\theoremstyle{definition} \newtheorem{definition}[theorem]{Definition}
\newtheorem*{acknowledgements}{Acknowledgements}
\theoremstyle{remark} 
\begin{document}

\title{$\tau^2$-stable tilting complexes over weighted projective
  lines}

\author[G. Jasso]{Gustavo Jasso} \address{Mathematisches Institut, Universit\"at Bonn, Endenicher Allee 60, 53115 Bonn, Germany}
\email{gjasso@math.uni-bonn.de}

\begin{abstract}
  Let $\XX$ be a weighted projective line and $\cohX$ the associated
  categoy of coherent sheaves.  We classify the tilting complexes $T$
  in $\Db(\cohX)$ such that $\tau^2T\cong T$, where $\tau$ is the
  Auslander-Reiten translation in $\Db(\cohX)$.  As an application of
  this result, we classify the 2-representation-finite algebras which
  are derived-equivalent to a canonical algebra.  This complements
  Iyama-Oppermann's classification of the iterated tilted
  2-representation-finite algebras.  By passing to 3-preprojective
  algebras, we obtain a classification of the selfinjective
  cluster-tilted algebras of canonical-type.  This complements
  Ringel's classification of the selfinjective cluster-tilted
  algebras.
\end{abstract}

\maketitle

\section{Introduction}

Let $\XX$ be a weighted projective line over an algebraically closed
field and
\[
\tau\colon \Db(\cohX)\to\Db(\cohX)
\]
be the Auslander-Reiten translation in the bounded derived category of
$\cohX$, see \cite{geigle_class_1985} for definitions.  The following
objects, which are closely related to each other, are classified in
this article:
\begin{enumerate}
\item \label{intro:tau2-stable} The $\tau^2$-stable tilting complexes
  in $\Db(\cohX)$,
\item \label{intro:2-RF} the 2-representation-finite algebras which
  are derived equivalent to $\cohX$ and
\item \label{intro:selfinjective-ct-can} the selfinjective
  cluster-tilted algebras of canonical type.
\end{enumerate}

The interest in classifying the objects above has its origin in higher
Auslander-Reiten theory which was introduced by Iyama in
\cite{iyama_higher-dimensional_2007}.  As the name suggests, it is a
higher-dimensional analog of classical Auslander-Reiten theory for
finite dimensional algebras. Let $\Lambda$ be a finite dimensional algebra. Higher Auslander-Reiten theory can be
developed in distinguished subcategories of $\mod \Lambda$,  nowadays
called $n$-cluster-tilting subcategories.  A subcategory $\M$ of $\mod
\Lambda$ is an \emph{$n$-cluster-tilting subcategory} if
\begin{align*}
  \M &= \setP{N\in\mod \Lambda}{\Ext_\Lambda^i(-,N)|_{\M}=0\text{ for
    }
    i\in\set{1,\dots,n-1}} \\
  &= \setP{N\in\mod \Lambda}{\Ext_\Lambda^i(N,-)|_\M=0\text{ for }
    i\in\set{1,\dots,n-1}}.
\end{align*}
One of the most remarkable features of higher Auslander-Reiten theory
is the existence of a functor $\tau_n\colon\M\to\M$ together with a
natural isomorphism
\[
\Ext_\Lambda^n(X,Y) \cong D\overline{\Hom}_\Lambda(Y,\tau_nX)
\quad\text{for all}\quad X,Y\in\M,
\]
which is a higher analog of usual Auslander-Reiten duality.

The simplest class of algebras which have an $n$-cluster-tilting
subcategory are the so-called $n$-representation-finite algebras,
which where introduced by Iyama and Oppermann in
\cite{iyama_n-representation-finite_2011}.  A finite dimensional
algebra $\Lambda$ is said to be \emph{$n$-representation-finite} if
$\Lambda$ has global dimension $n$ and there exists a $\Lambda$-module
$M$ such that $\add M$ is an $n$-cluster-tilting subcategory (in this
case $M$ is called a \emph{$n$-cluster-tilting module}).  For example,
1-representation-finite algebras are precisely representation-finite
hereditary algebras.  In this sense, $n$-representation-finite
algebras may be regarded as a higher analog of representation-finite
hereditary algebras.

Now we explain what are the objects that we classify in this article,
and how do they relate to each other.  The 1-representation-finite
algebras were classified by Gabriel in
\cite{gabriel_unzerlegbare_1972}: they are precisely the algebras
which are Morita-equivalent to the path algebras of quivers whose
underlying graph is a Dynkin diagram of simply-laced type (we work over a fixed algebraically closed field). It is then
natural to study 2-representation-finite algebras. Important
structural results regarding 2-representation finite algebras in terms
of selfinjective quivers with potential have been obtained by
Herschend and Iyama in \cite{herschend_selfinjective_2011} where they
also have provided large classes of examples of such algebras.
Following \cite{happel_tilting_1996}, we say that a finite dimensional
algebra is \emph{piecewise hereditary} if it is derived equivalent to
a hereditary category $\HH$ or, equivalently, if it is isomorphic to
the endomorphism algebra of a tilting complex in $\Db(\HH)$.  From a
homological point of view, the simplest kind of
2-representation-finite algebras are the ones which are piecewise
hereditary.

By a celebrated result of Happel
\cite[Thm. 3.1]{happel_characterization_2001}, it is known that there
are only two kinds of hereditary categories (satisfying suitable
finiteness conditions) which have a tilting object: the ones which are
derived equivalent to $\mod H$ where $H$ is a finite dimensional
hereditary algebra, and the ones which are derived equivalent to
$\cohX$ where $\XX$ is a weighted projective line. We distinguish
between piecewise hereditary algebras as follows: We
say that a finite dimensional algebra $\Lambda$ is \emph{iterated
  tilted} if $\mod \Lambda$ is derived equivalent to $\mod H$ where
$H$ is a finite dimensional hereditary algebra.  Similarly, we say
that $\Lambda$ is \emph{derived-canonical} if $\mod \Lambda$ is
derived equivalent to $\cohX$ for some weighted projective line $\XX$.

Taking advantage of Ringel's classification of the selfinjective
cluster-tilted algebras \cite{ringel_self-injective_2008}, the
2-representation-finite algebras which are iterated tilted were
classified by Iyama and Oppermann in
\cite[Thm. 3.12]{iyama_stable_2013}.  Note that these
algebras are derived equivalent to representation-finite
hereditary algebras whose underlying quiver is of Dynkin type $D$.  In
particular, there are no 2-representation-finite algebras which are
derived equivalent to a tame or wild hereditary algebra.

The following result is the main result of this article. It gives a classification of the 2-representation-finite
derived canonical algebras, and thus complements Iyama-Oppermann's
classification \cite[Thm. 3.12]{iyama_stable_2013}.

\begin{theorem}[see Theorem \ref{thm:2-rf-der-can}]
  \label{intro:classification-2RF}
  The complete list of all basic 2-representation-finite
  derived-canonical algebras is given in Figures \ref{fig:2222-2-APR},
  \ref{fig:244-2RF} and \ref{fig:236-2RF}. In this case, the
  corresponding weighted projective line has tubular type
  $(2,2,2,2;\lambda)$, $(2,4,4)$ or $(2,3,6)$.
  \begin{figure}
    \centering
    \includegraphics[scale=0.5]{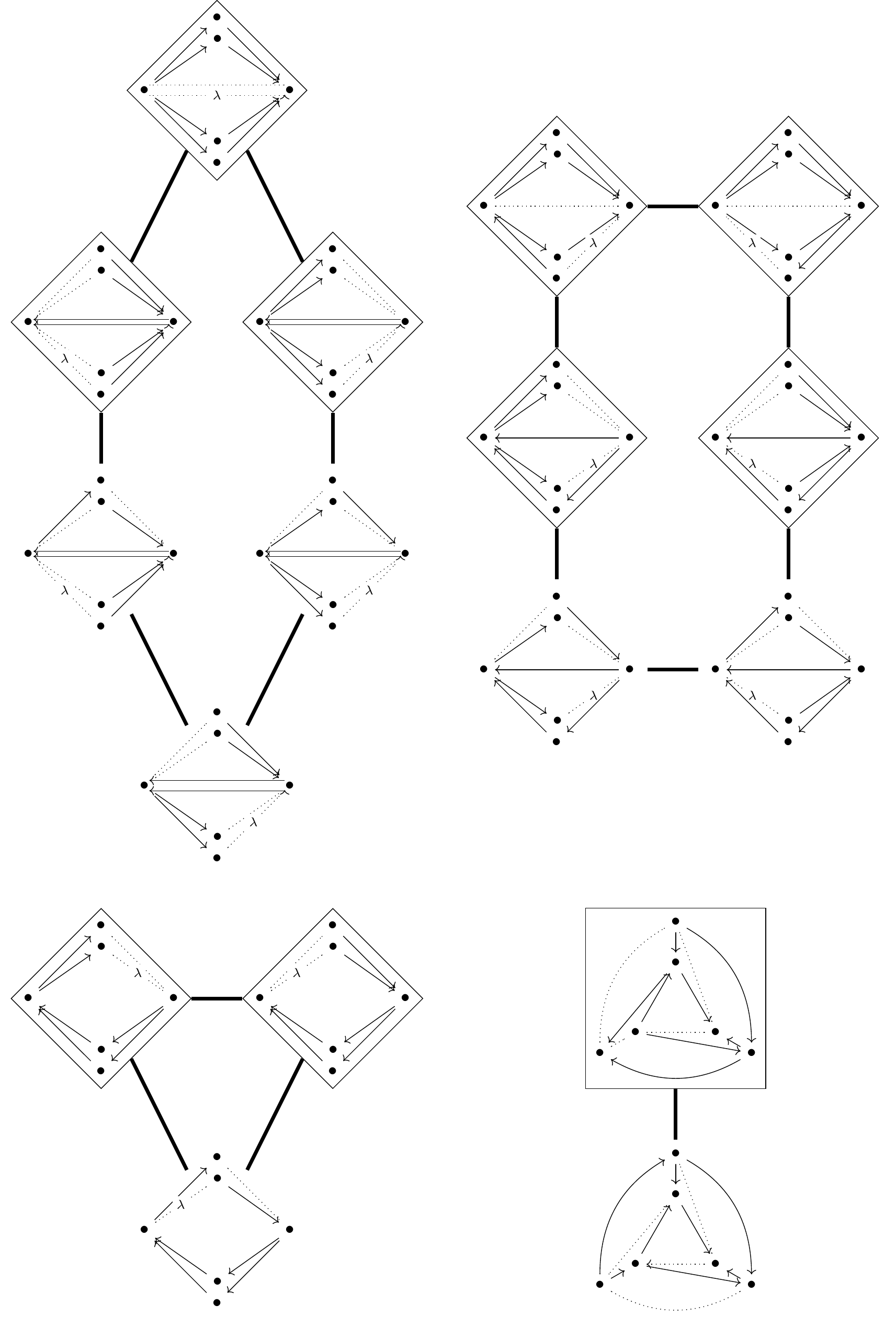}
    \caption{Endomorphism algebras of basic tilting complexes in
      $\Db(\cohX)$ for type $(2,2,2,2;\lambda)$. All complexes are
      $\tau^2$-stable since $\tau^2$ is the identity on $\cohX$. The
      relations are induced by the quivers with potential in Figure
      \ref{fig:2222}; those with label $\lambda$ correspond to relations involving
      the distinguished parameter. Thick lines indicate
      2-APR-(co)tilting. The algebras that arise as
      endomorphism algebras of tilting sheaves in $\cohX$ are enclosed
      in a frame.}
    \label{fig:2222-2-APR}
  \end{figure}
  \begin{figure}
    \centering
     \includegraphics[scale=0.5]{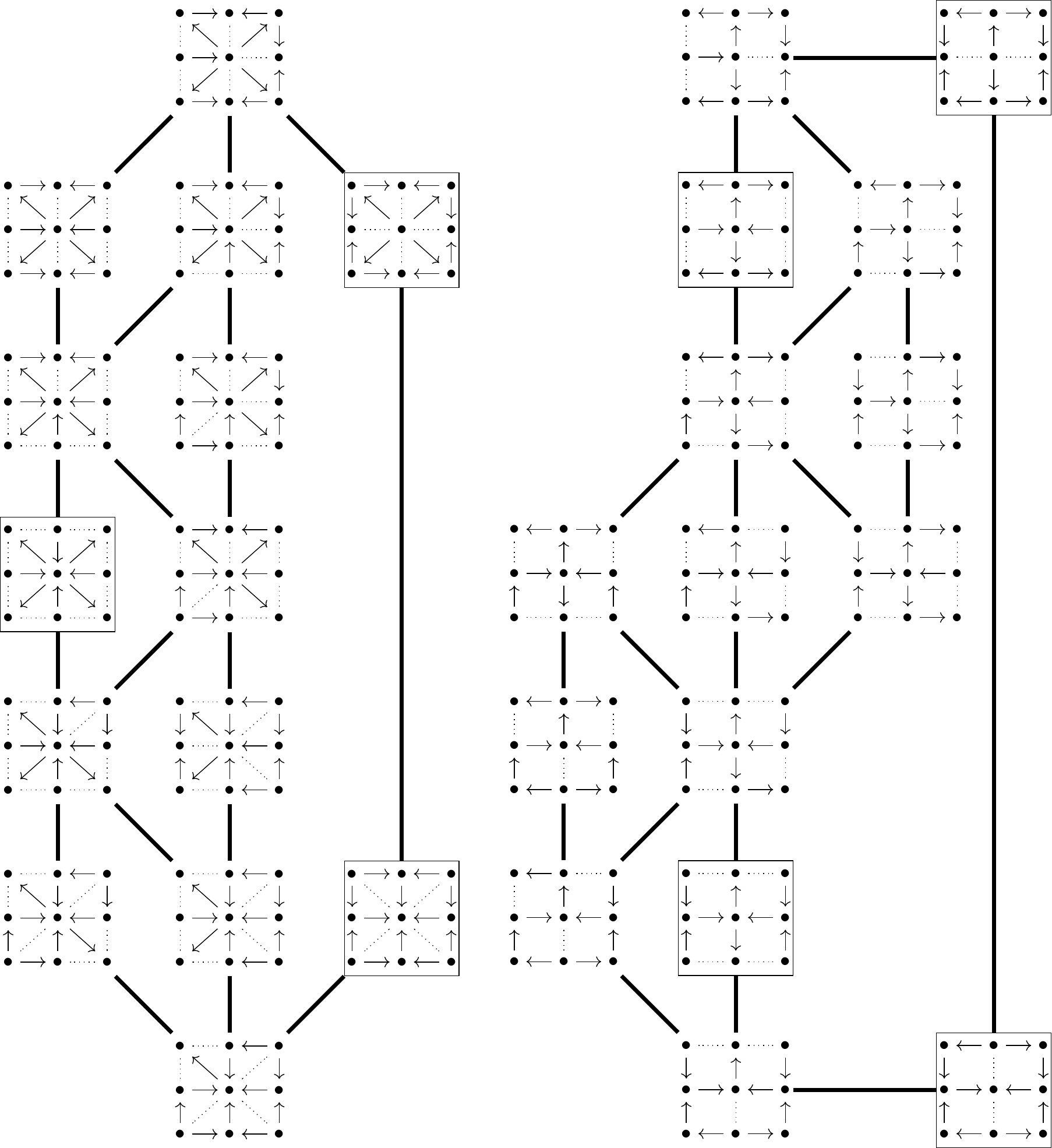} 
    \caption{(Part 1 of 2) The basic 2-representation-finite derived-canonical algebras of
      type (2,4,4). Thick lines indicate 2-APR-(co)tilting. The
      algebras that arise as endomorphism algebras of tilting sheaves
      in $\cohX$ are enclosed in a frame.}
    \label{fig:244-2RF}
  \end{figure}
  \addtocounter{figure}{-1}
  \begin{figure}
    \centering
      \includegraphics[scale=0.5]{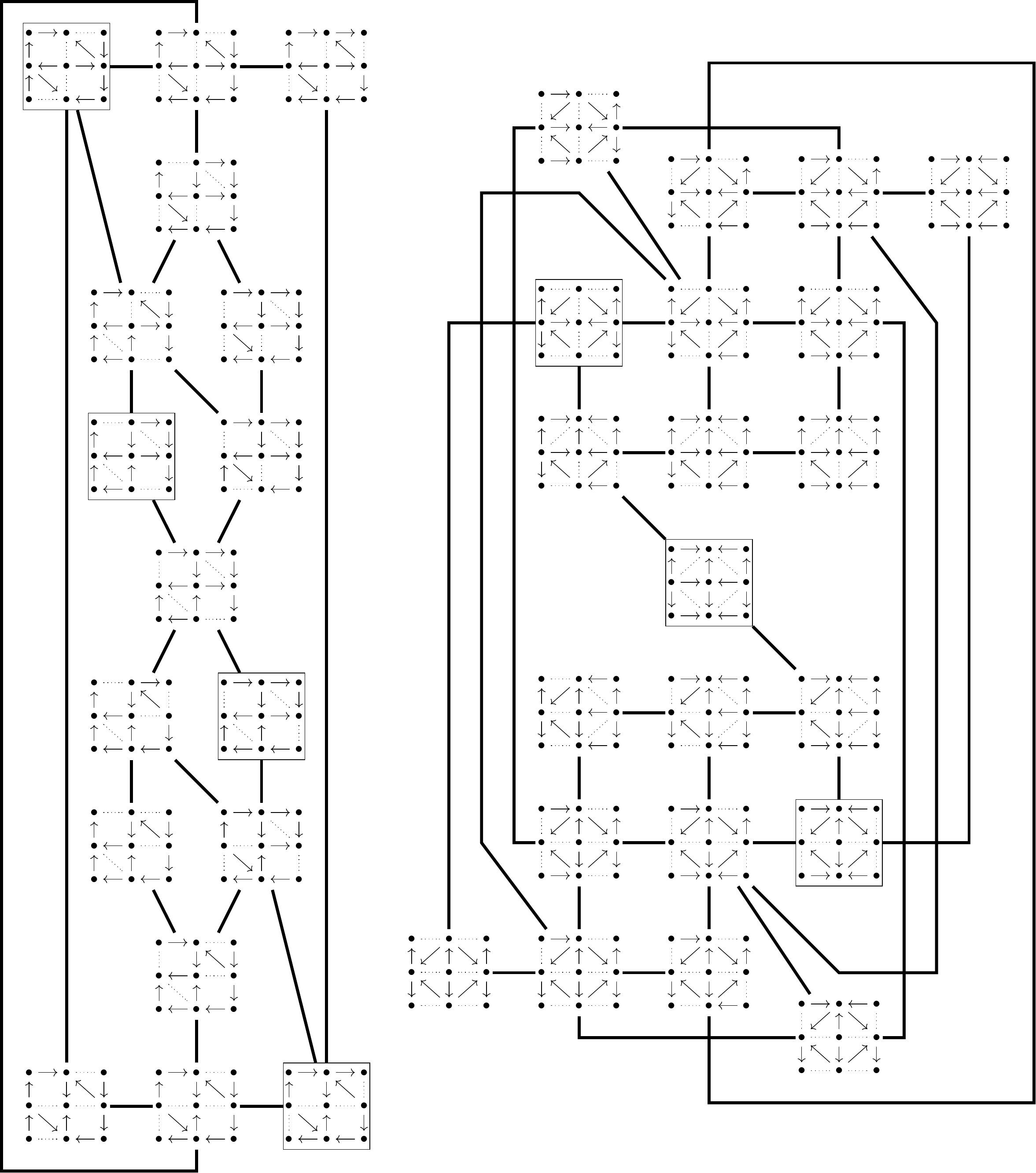}
    \caption{(Part 2 of 2) The basic 2-representation-finite derived-canonical algebras of
      type (2,4,4). Thick lines indicate 2-APR-(co)tilting. The
      algebras that arise as endomorphism algebras of tilting sheaves
      in $\cohX$ are enclosed in a frame.}
  \end{figure}
  \begin{figure}
    \centering
    \includegraphics[scale=0.4]{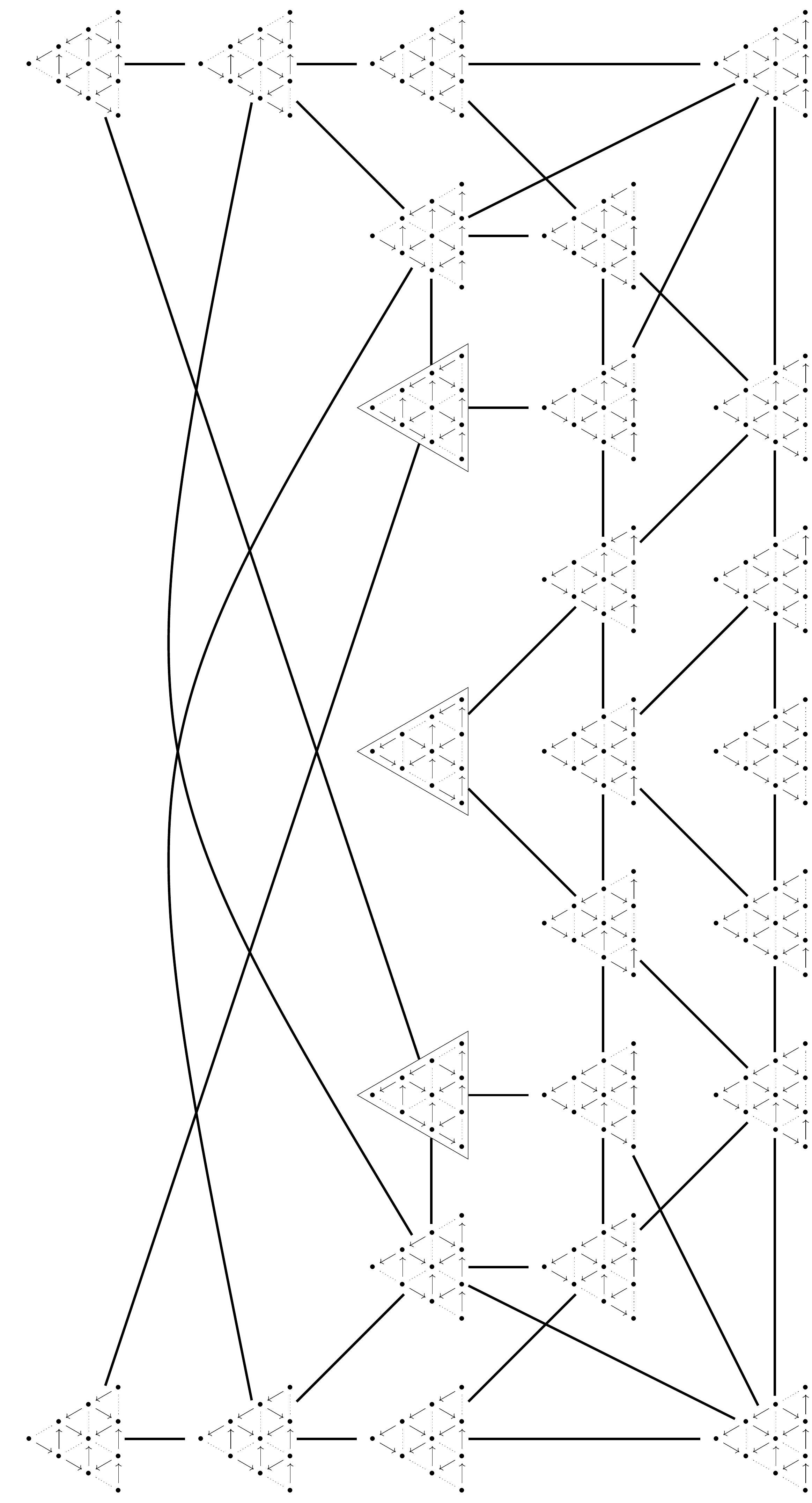}
    \caption{(Part 1 of 2) The basic 2-representation-finite derived-canonical algebras of
      type (2,3,6). Thick lines indicate 2-APR-(co)tilting. The
      algebras that arise as endomorphism algebras of tilting sheaves
      in $\cohX$ are enclosed in a frame.}
    \label{fig:236-2RF}
  \end{figure}
  \addtocounter{figure}{-1}
  \begin{figure}
    \centering
    \includegraphics[scale=0.4]{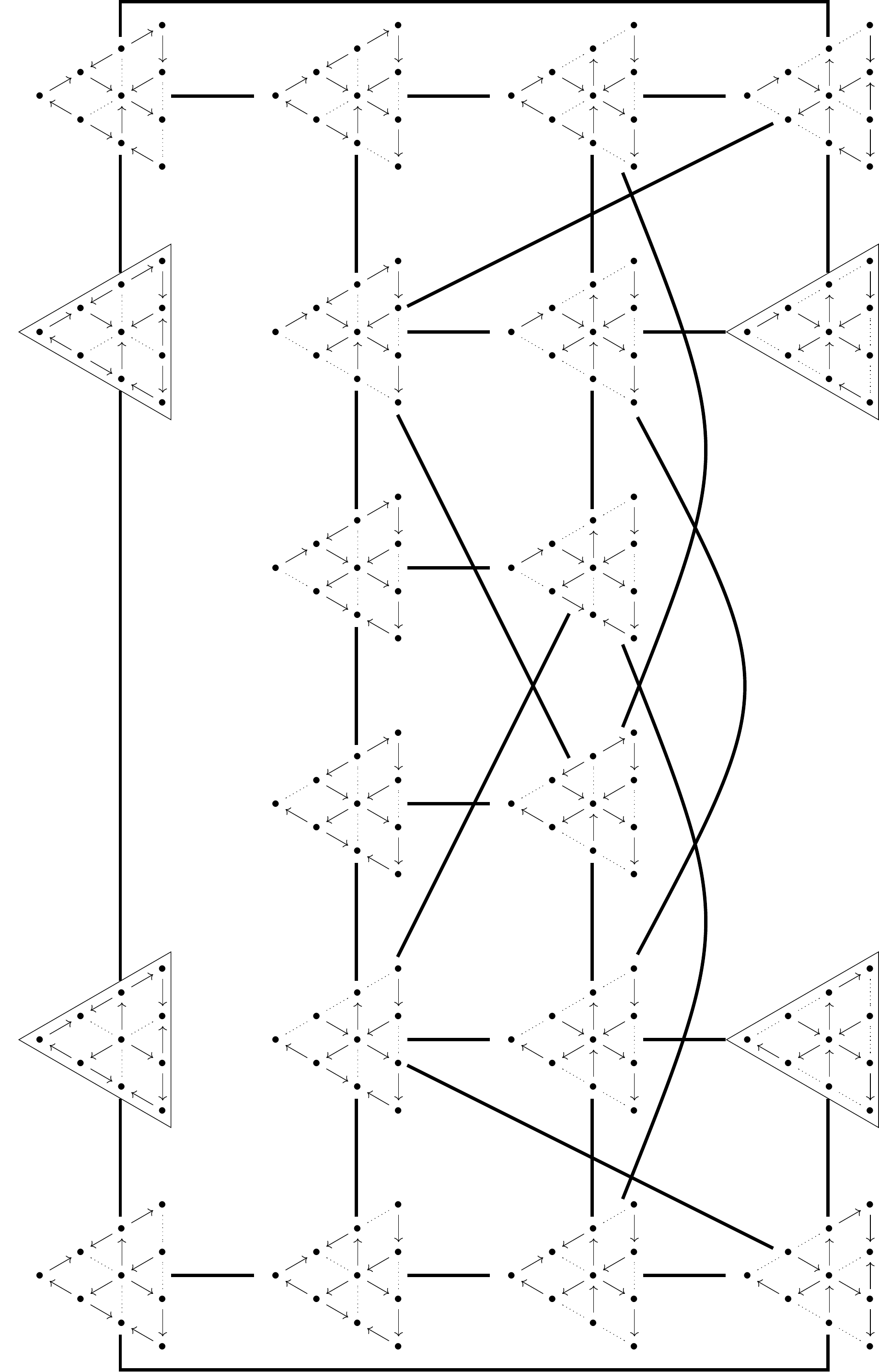}
    \caption{(Part 2 of 2) The basic 2-representation-finite derived-canonical algebras of
      type (2,3,6). Thick lines indicate 2-APR-(co)tilting. The
      algebras that arise as endomorphism algebras of tilting sheaves
      in $\cohX$ are enclosed in a frame.}
  \end{figure}
\end{theorem}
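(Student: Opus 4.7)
The plan is to reduce the classification to a statement about $\tau^2$-stable tilting complexes in $\Db(\cohX)$, and then to enumerate these case by case. I would first observe that any derived-canonical algebra $\Lambda$ is of the form $\End_{\Db(\cohX)}(T)$ for some basic tilting complex $T\in\Db(\cohX)$, and that $\Lambda$ is 2-representation-finite precisely when $\tau^2 T\cong T$. This equivalence is the main technical bridge between items~(a) and~(b) of the introduction. The forward implication uses that, for $\Lambda$ of global dimension~$2$, the 2-Auslander-Reiten translation $\tau_2$ corresponds under the derived equivalence to a twist of the ordinary Auslander-Reiten translation $\tau$ on $\Db(\cohX)$; the finiteness of the $\tau_2$-orbit of $\Lambda$ (which for piecewise hereditary algebras is equivalent to 2-representation-finiteness) then translates into the finiteness of the $\tau^2$-orbit of $T$, and after a further reduction into the single condition $\tau^2 T\cong T$.

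Second, I would constrain $\XX$ to be of tubular type. On $\Db(\cohX)$, the Auslander-Reiten translation $\tau$ has finite order on every indecomposable precisely when $\XX$ has one of the four tubular weight types $(2,2,2,2;\lambda)$, $(3,3,3)$, $(2,4,4)$ or $(2,3,6)$. In the domestic and wild cases $\tau$ acts with infinite orbits on the vector bundle part of $\cohX$, so no tilting complex---having only finitely many indecomposable summands---can satisfy $\tau^2 T\cong T$.

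Third, I would enumerate, for each tubular weight type, the basic $\tau^2$-stable tilting complexes. In type $(2,2,2,2;\lambda)$ one has $\tau^2=\id$ on all of $\Db(\cohX)$, so every basic tilting complex is automatically $\tau^2$-stable and the problem reduces to listing them up to 2-APR-(co)tilting mutations starting from a reference tilting sheaf, yielding Figure~\ref{fig:2222-2-APR}. In types $(2,4,4)$ and $(2,3,6)$, where $\tau$ has order~$4$ and~$6$ respectively, one uses the slope decomposition of $\Db(\cohX)$ into tubular families together with the explicit $\tau$-action on each family to enumerate the $\tau^2$-stable combinations, producing Figures~\ref{fig:244-2RF} and~\ref{fig:236-2RF}. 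Finally, the tubular type $(3,3,3)$ is ruled out: there $\tau$ has order~$3$, hence $\tau^2$-stability is equivalent to $\tau$-stability, and a direct slope-profile analysis shows that no basic tilting complex can be $\tau$-stable. The main obstacle is the explicit enumeration in types $(2,4,4)$ and $(2,3,6)$: one must control the 2-APR-tilting mutation graph to ensure completeness and non-redundancy, and translate the geometric classification into the explicit quivers-with-potential descriptions shown in the figures; the 2-representation-finiteness of each resulting endomorphism algebra is then verified via the Herschend-Iyama criterion for selfinjective quivers with potential.
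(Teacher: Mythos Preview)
Your main bridge is stated too strongly and, as written, is false. You claim that for a basic tilting complex $T\in\Db(\cohX)$, the algebra $\End_{\Db(\XX)}(T)$ is $2$-representation-finite precisely when $\tau^2T\cong T$ in $\Db(\cohX)$. In the paper this equivalence holds only for tilting \emph{sheaves} $T\in\cohX$ (Proposition~\ref{prop:tau2-stable}); for arbitrary tilting complexes, $\tau^2T\cong T$ is equivalent to $\End_{\Db(\XX)}(T)$ being \emph{$2$-homogeneous} $2$-representation-finite (Proposition~\ref{prop:tau2-2RF}). These two classes genuinely differ: compare Figures~\ref{fig:244-2RF} (all $2$-RF algebras of type $(2,4,4)$) with Figure~\ref{fig:244-2-APR} (only the $\tau^2$-stable ones). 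So your ``further reduction into the single condition $\tau^2T\cong T$'' is not a reduction at all---it loses exactly those $2$-RF algebras that are not $2$-homogeneous. The paper handles this by a different mechanism: Proposition~\ref{prop:reduction-to-cohX} shows that any tilting complex with $2$-RF endomorphism algebra can be moved, by iterated $2$-APR-tilting, to a $\tau^2$-stable tilting \emph{sheaf}. One then classifies those sheaves (Theorem~\ref{thm:classification-tau2X}) and recovers the full list by running $2$-APR-(co)tilting back out. Your plan mentions $2$-APR mutations only as a bookkeeping device for the output graph, not as the essential reduction step it actually is.

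Your enumeration strategy for types $(2,4,4)$ and $(2,3,6)$ is also quite different from the paper's, and as stated is too vague to be a plan. You propose to use ``the slope decomposition of $\Db(\cohX)$ into tubular families together with the explicit $\tau$-action on each family''. The paper does something much more concrete (due to Lenzing): put $T$ in normal position so that $T_0\neq 0$; observe $T_0$ is a sum of exceptional simples (Lemma~\ref{lemma:T0-semisimple}); pass to the perpendicular category $T_0^\perp\cong\coh\YY$ with $\chi(\YY)>0$; identify $T_+$ with a preprojective tilting module over a tame hereditary algebra of extended Dynkin type, and read off the candidates from the Happel--Vossieck list, keeping only those admitting the required free symmetry of order $p/2$ on line-bundle summands; finally rebuild $\End_\XX(T)$ via the multiextension formula (Theorem~\ref{thm:multiextension}). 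Your slope-by-slope idea gives no comparable handle on which configurations are realizable and rigid, and would in practice still need something like the perpendicular-category reduction to finish. Your exclusion of type $(3,3,3)$ is in the right spirit; the paper's argument is the clean one-liner that a $\tau^2$-stable rigid indecomposable must sit in a tube of even rank (Proposition~\ref{prop:cases}).
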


Note that there are no
2-representation-finite algebras which are derived equivalent to
$\cohX$ for a weighted projective line $\XX$ of wild type.  It is
important to note that in the case $(2,2,2,2;\lambda)$ all
derived-canonical algebras are 2-representation-finite. The
classification of all derived-canonical algebras of type
$(2,2,2,2;\lambda)$ is known, see for example Skowro{\'n}ski
\cite[Ex. 3.3]{skowronski_selfinjective_1989}, Barot-de la Pe\~{n}a
\cite[Fig. 1]{barot_derived_1999} and Meltzer in
\cite[Thm. 10.4.1]{meltzer_exceptional_2004}.
Also, part 1 of Figure \ref{fig:244-2RF} already appeared in
\cite[Fig. 1]{herschend_selfinjective_2011}.

We mention that there exists a notion of 2-APR-(co)tilting, which is a
higher analog of classical APR-(co)tilting, and that it preserves
2-representation-finiteness, see Definition \ref{def:2-apr-tilting}.
The algebras in Figures \ref{fig:2222-2-APR},
  \ref{fig:244-2RF} and \ref{fig:236-2RF} are related by
  2-APR-(co)tilting as indicated. 

Let $\tau\colon\Db(\cohX)\to \Db(\cohX)$ be the Auslander-Reiten
translation. We say that a sheaf $X\in\Db(\cohX)$ is
\emph{$\tau^2$-stable} if $\tau^2 X\cong X$. Theorem
\ref{intro:classification-2RF} is a consequence of the following result, which gives a classification of the $\tau^2$-stable tilting
sheaves over a weighted projective line.

\begin{theorem}[see Theorem \ref{thm:t2-cpx}]
  \label{intro:classification-t2}
  Let $\XX$ be a weighted projective line and $T$ a basic tilting
  complex in $\Db(\cohX)$.  Then $T$ is $\tau^2$-stable if and only if
  $\End_{\Db(\XX)}(T)$ is isomorphic to one of the algebras in Figures
  \ref{fig:2222-2-APR}, \ref{fig:244-2-APR} and
  \ref{fig:236-2-APR}. Moreover, this determines $T$ up to an
  autoequivalence of $\Db(\cohX)$. In this case, the
  corresponding weighted projective line has tubular type
  $(2,2,2,2;\lambda)$, $(2,4,4)$ or $(2,3,6)$.
  \begin{figure}
    \centering
    \includegraphics[scale=0.5]{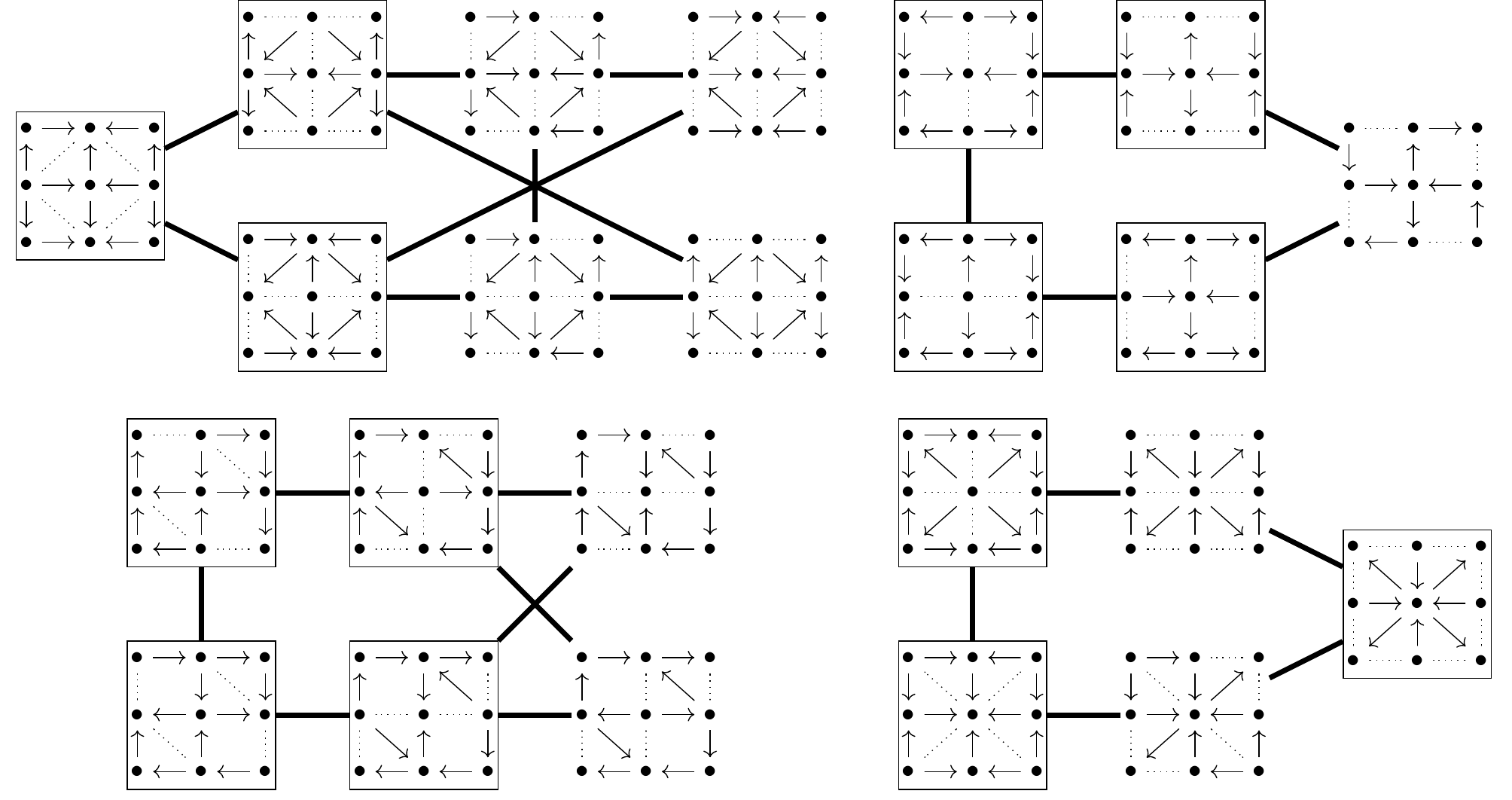}
    \caption{Endomorphism algebras of $\tau^2$-stable basic tilting
      complexes in $\Db(\cohX)$ for type $(2,4,4)$. All relations are
      commutativity or zero relations, \cf Figure \ref{fig:244}.
      Thick lines indicate 2-APR-(co)tilting along orbits of the
      action of $\tau^2$, which is given by rotation by $\pi$.  The
      algebras that arise as endomorphism algebras of tilting sheaves
      in $\cohX$ are enclosed in a frame.}
    \label{fig:244-2-APR}
  \end{figure}
  \begin{figure}
    \centering 
    \includegraphics[scale=0.5]{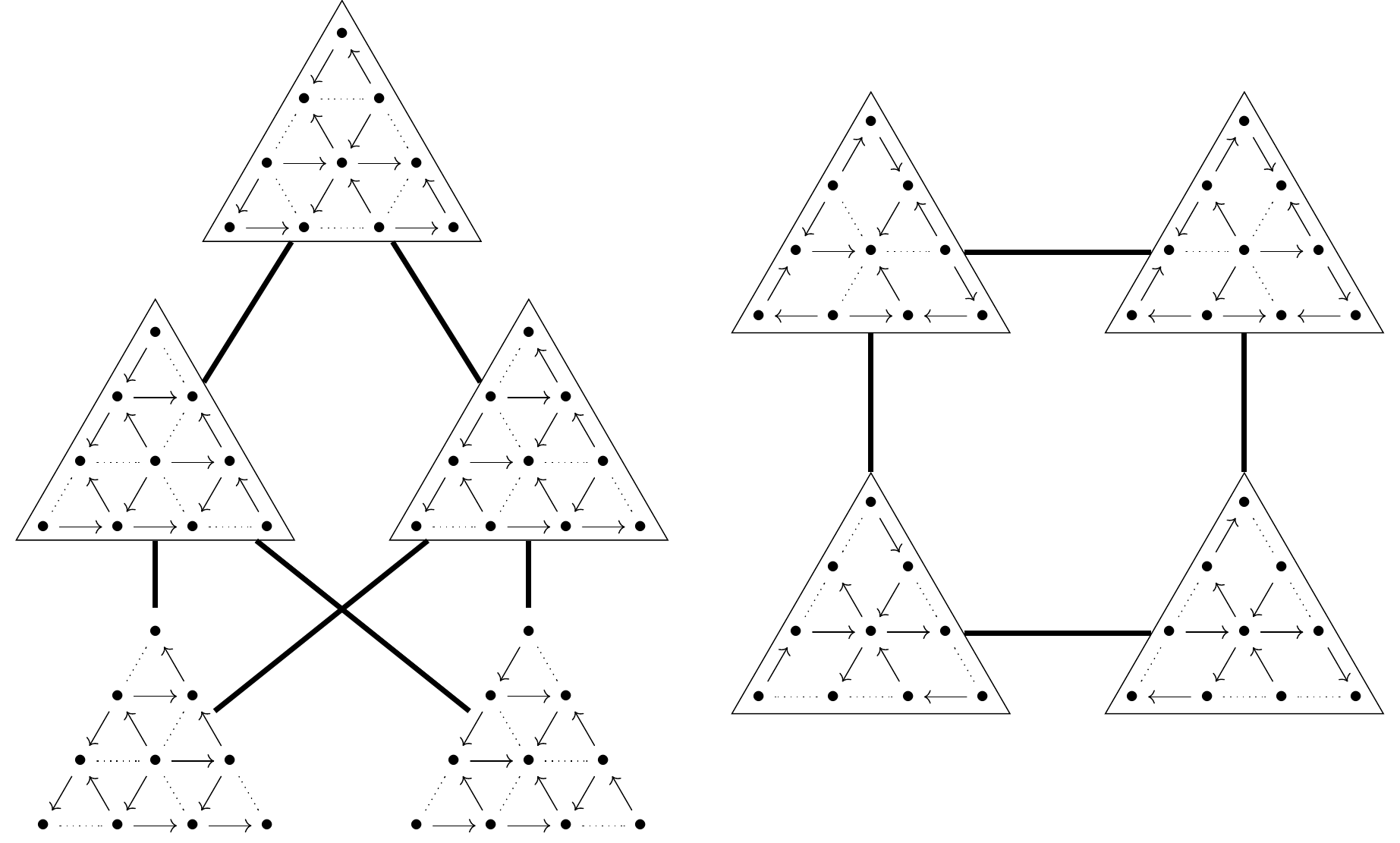}
    \caption{Endomorphism algebras of $\tau^2$-stable basic tilting
      complexes in $\Db(\cohX)$ for type $(2,3,6)$. All relations are
      commutativity or zero relations, \cf Figure \ref{fig:236}.
      Thick lines indicate 2-APR-(co)tilting along orbits of the
      action of $\tau^2$, which is given by counter-clockwise rotation
      by $2\pi/3$.  The algebras that arise as endomorphism algebras
      of tilting sheaves in $\cohX$ are enclosed in a frame.}
    \label{fig:236-2-APR}
  \end{figure}
\end{theorem}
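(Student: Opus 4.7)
The plan is to reduce to the case of tubular weighted projective lines via a Grothendieck group argument and then analyze each tubular weight type individually. For any basic tilting complex $T = \bigoplus_{i=1}^{n} T_i$ in $\Db(\cohX)$, the isomorphism $\tau^2 T \cong T$ is equivalent to $\tau^2$ permuting the isomorphism classes of the indecomposable summands $T_i$. Since these classes form a $\ZZ$-basis of $K_0(\cohX) = K_0(\Db(\cohX))$, the action of $\tau^2$ on $K_0(\cohX)$ is forced to have finite order. Consulting the description of the Coxeter transformation in terms of the genus of $\XX$, this condition excludes both the domestic case (where the Coxeter matrix has a non-trivial Jordan block) and the wild case (where it admits an eigenvalue which is not a root of unity), so $\XX$ must be of tubular type.

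Next I would rule out the tubular type $(3,3,3)$ by a counting argument: here $\tau$ has order $3$ on the AR-quiver of $\cohX$, rotating each tube of rank $3$ by one position and shifting line bundles by $\vecw$ along the $\LL$-action. In particular no indecomposable object of $\Db(\cohX)$ is $\tau^2$-fixed, so every $\tau^2$-orbit of indecomposables has size $3$. However, the number of indecomposable summands of any basic tilting complex equals $\rk K_0(\cohX) = 2 + \sum_i (p_i - 1) = 8$, which is not divisible by $3$.

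For type $(2,2,2,2;\lambda)$ one has $\tau^2 = \id$ on $\cohX$, so every basic tilting complex is automatically $\tau^2$-stable and the classification reduces to the known classification of basic tilting complexes in $\Db(\cohX)$ up to autoequivalence, due to Skowro\'{n}ski, Barot-de la Pe\~{n}a and Meltzer, yielding Figure \ref{fig:2222-2-APR}. For the remaining types $(2,4,4)$ and $(2,3,6)$, where $\tau^2$ has orders $2$ and $3$ respectively on $\cohX$, I would proceed in two steps. First, classify the $\tau^2$-stable tilting sheaves in $\cohX$ (the framed algebras in Figures \ref{fig:244-2-APR} and \ref{fig:236-2-APR}) by combining the Geigle-Lenzing description of line bundles and exceptional tubes, the slope filtration of $\cohX$, and the requirement that the set of indecomposable summands be $\tau^2$-invariant. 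Second, propagate via 2-APR-(co)tilting mutations performed along $\tau^2$-orbits of summands, which preserve both the tilting property and $\tau^2$-stability (\cf Definition \ref{def:2-apr-tilting}).

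The main obstacle will be completeness of the lists: one must verify that every 2-APR-(co)tilting mutation of a listed $\tau^2$-stable tilting complex is again listed, and conversely that every $\tau^2$-stable tilting complex in $\Db(\cohX)$ can be reached from a $\tau^2$-stable tilting sheaf by a finite sequence of such mutations. This requires a careful combinatorial analysis of the $\tau^2$-orbits and their interaction with the decomposition of $\cohX$ into vector bundles and torsion sheaves. Uniqueness of $T$ up to an autoequivalence of $\Db(\cohX)$ will then follow because, within each mutation class, the endomorphism algebra together with the $\tau^2$-stability data determines $T$ up to the action of the autoequivalence group of $\Db(\cohX)$, which is generated by $\LL$-twists, tubular rotations and the shift functor.
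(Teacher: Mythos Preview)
Your reduction to the tubular case and your counting argument for ruling out type $(3,3,3)$ are both valid and slightly different from what the paper does (the paper argues via the slope formula $\slope(\tau X)=\slope(X)+\delta(\vecw)$, and excludes $(3,3,3)$ by showing via Serre duality that no summand can lie in a tube of odd rank). These alternatives are fine.

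The real problem is the step you yourself flag as ``the main obstacle'': showing that every $\tau^2$-stable tilting \emph{complex} can be reached from a $\tau^2$-stable tilting \emph{sheaf} by iterated 2-APR-(co)tilting along $\tau^2$-orbits. You give no argument here, and the naive approach does not obviously work: to verify that replacing the top-degree piece $T_\ell[-\ell]$ by $\nu_2^{-1}(T_\ell[-\ell])$ is a legitimate 2-APR tilt one needs the vanishing of $\Ext^i$ for $i\geq 3$, which amounts to $\End_{\Db(\XX)}(T)$ having global dimension at most~$2$. The paper secures this by first proving the equivalence ``$T$ is $\tau^2$-stable $\Leftrightarrow$ $\End_{\Db(\XX)}(T)$ is $2$-homogeneous $2$-representation-finite'' (Proposition~\ref{prop:tau2-2RF}); the forward implication uses $\nu_2^2\Lambda\cong\Lambda$ to force $\Ext_\Lambda^i(D\Lambda,\Lambda)=0$ for $i\geq 3$. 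With global dimension $\leq 2$ in hand, the inductive reduction on the length of the complex (Proposition~\ref{prop:reduction-to-cohX}) goes through. Without this bridge your outline has a genuine gap.

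A second, more technical, gap is in your first step for types $(2,4,4)$ and $(2,3,6)$: ``classify the $\tau^2$-stable tilting sheaves \dots\ by combining the Geigle--Lenzing description of line bundles and exceptional tubes, the slope filtration, and $\tau^2$-invariance'' is not a proof. The paper's method here (due to Lenzing) is to put $T$ in normal position, observe that $T_0$ is a sum of exceptional simples, pass to the perpendicular category $T_0^\perp\simeq\coh\YY$ with $\chi(\YY)>0$, and then read off $\End_\YY(T_+)$ from the Happel--Vossieck list of tame concealed algebras, subject to the symmetry of order $p/2$ forced by $\tau^2$; the full algebra $\End_\XX(T)$ is then recovered as a one-point (multi)extension via Theorem~\ref{thm:multiextension}. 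This perpendicular-category reduction is the organising idea, and nothing in your sketch substitutes for it. Finally, your description of the autoequivalence group of $\Db(\cohX)$ is incomplete in the tubular case (telescopic/tubular mutations are missing); the paper instead appeals to \cite[Thm.~3.2]{lenzing_exceptional_2002} for the uniqueness statement.
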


A finite dimensional algebra is \emph{cluster-tilted of canonical
  type} if it is isomorphic to the endomorphism algebra of a
cluster-tilting object in the cluster category $\C_\XX$ associated to
a weighted projective line $\XX$, see Section
\ref{sec:cluster-category} for definitions.

By results of Keller \cite{keller_deformed_2011} and Amiot
\cite{amiot_cluster_2009}, the basic cluster-tilted algebras of
canonical type are 3-preprojective algebras of basic derived canonical
algebras of global dimension at most 2. Moreover, they are Jacobian
algebras of quivers with potential, see Section
\ref{sec:cluster-category}.  As a consequence of Theorem
\ref{intro:classification-2RF}, we obtain a classification of the
selfinjective cluster-tilted algebras of canonical type.  This
complements Ringel's classification \cite{ringel_self-injective_2008}.

\begin{theorem}[see Theorem \ref{thm:classification-ct-can}]
  \label{intro:classification-ct-can}
  The complete list of all basic selfinjective cluster-tilted algebras
  of canonical type is given by the Jacobian algebras of the quivers
  with potential in Figures \ref{fig:2222},
  \ref{fig:244} and \ref{fig:236}. In this case, the
  corresponding weighted projective line has tubular type
  $(2,2,2,2;\lambda)$, $(2,4,4)$ or $(2,3,6)$.
  \begin{figure}
    \centering 
    \includegraphics[scale=1]{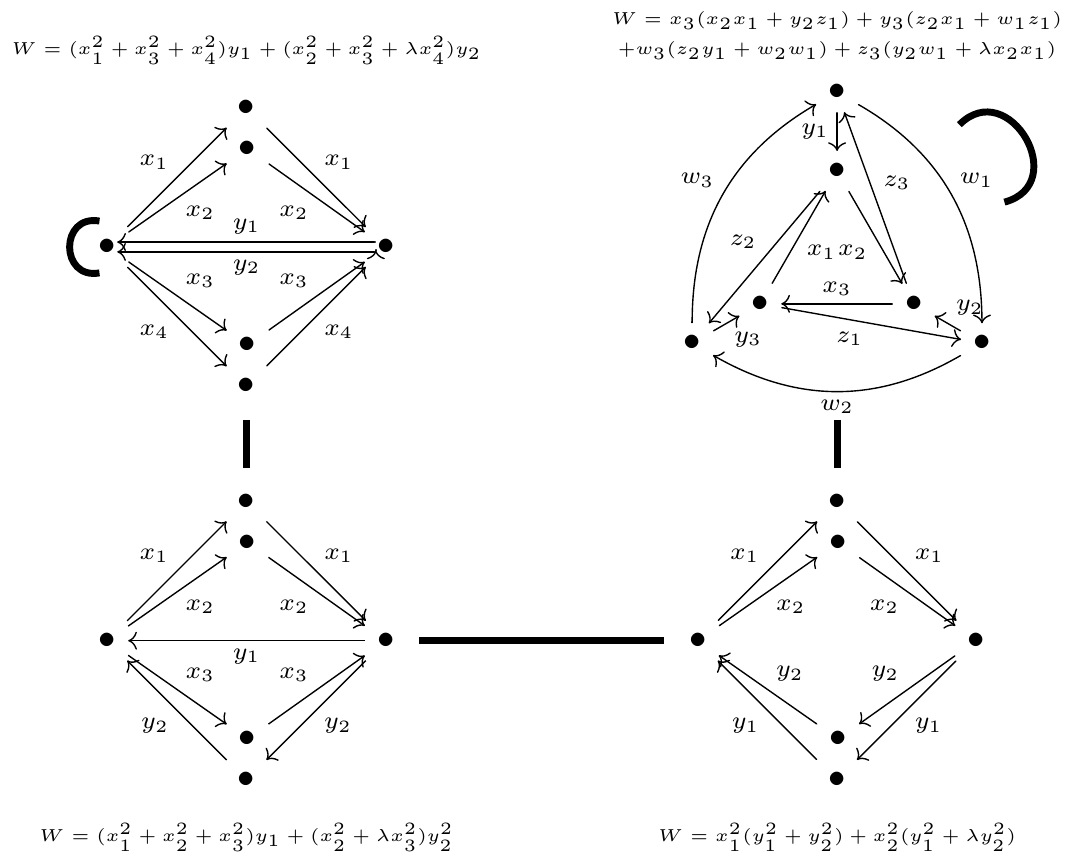}
    \caption{The quivers with potential associated to the basic
      selfinjective cluster-tilted algebras of type
      $\p=(2,2,2,2;\lambda)$. All cluster-tilted algebras are
      selfinjective since $\tau^2=1_\XX$. Thick edges indicate
      mutation of quivers with potential along the orbits of the
      Nakayama permutation, which is trivial in this case.  Note that
      $\lambda\neq 0,1$, and that we may replace $\lambda$ by
      $1-\lambda$, $\frac{1}{\lambda}$, $\frac{1}{1-\lambda}$,
      $\frac{\lambda}{1-\lambda}$ or $\frac{\lambda-1}{\lambda}$
      without changing the isomorphism class of the associated
      Jacobian algebra.}
    \label{fig:2222}
  \end{figure}
  \begin{figure}
    \centering
    \includegraphics[scale=1]{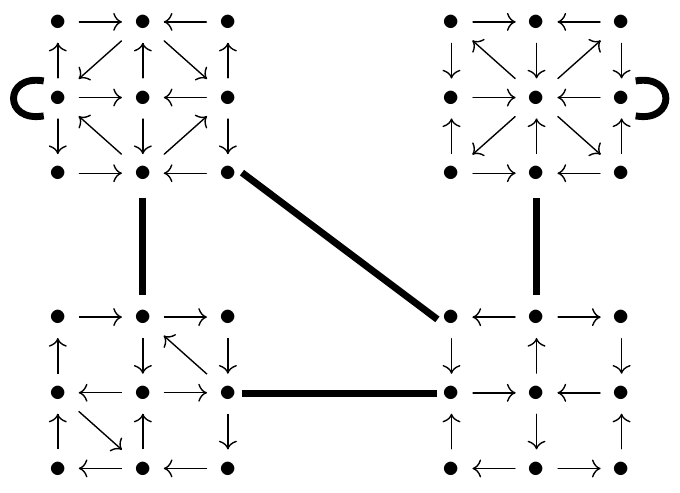}
    \caption{The quivers with potential associated to the basic
      selfinjective cluster-tilted algebras of type $\p=(2,4,4)$. For
      each quiver, the potential is given by $W=\sum \text{(clockwise
        cycles)} - \sum \text{(counter-clockwise cycles)}$. Thick
      edges indicate mutation of quivers with potential along the
      orbits of the Nakayama permutation, which is given by rotation
      by $\pi$.}
    \label{fig:244}
  \end{figure}
  \begin{figure}
    \centering
    \includegraphics[scale=1]{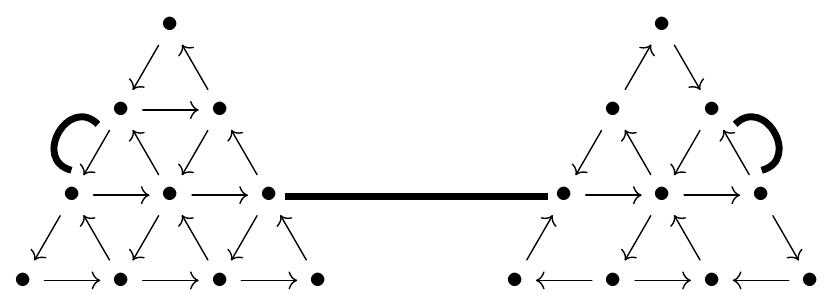}
    \caption{The quivers with potential associated to the basic
      selfinjective cluster-tilted algebras of type $\p=(2,3,6)$. For
      each quiver, the potential is given by $W=\sum \text{(clockwise
        cycles)} - \sum \text{(counter-clockwise cycles)}$.  Thick
      edges indicate mutation of quivers with potential along the
      orbits of the Nakayama permutation, which is given by
      counter-clockwise rotation by $2\pi/3$.}
    \label{fig:236}
  \end{figure}
\end{theorem}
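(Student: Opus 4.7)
The plan is to deduce Theorem~\ref{intro:classification-ct-can} from Theorem~\ref{intro:classification-2RF} via the known correspondence between basic cluster-tilted algebras of canonical type and $3$-preprojective algebras of derived-canonical algebras of global dimension at most $2$. By the results of Keller and Amiot cited in the introduction, every basic cluster-tilted algebra of canonical type is isomorphic to a $3$-preprojective algebra $\Pi_3(\Lambda)$ of such a $\Lambda$. Combined with the Iyama--Oppermann characterization (selfinjectivity of $\Pi_3(\Lambda)$ is equivalent to $2$-representation-finiteness of $\Lambda$), this yields a bijection between isomorphism classes of basic selfinjective cluster-tilted algebras of canonical type and isomorphism classes of basic $2$-representation-finite derived-canonical algebras. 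Hence the problem reduces entirely to a finite explicit computation.

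Next I would apply Theorem~\ref{intro:classification-2RF} to enumerate the basic $2$-representation-finite derived-canonical algebras $\Lambda$: these are exactly the algebras displayed in Figures~\ref{fig:2222-2-APR}, \ref{fig:244-2RF} and \ref{fig:236-2RF}. For each such $\Lambda$, I would compute $\Pi_3(\Lambda)$ as the Jacobian algebra of a quiver with potential via the standard prescription: adjoin to the Gabriel quiver of $\Lambda$ a dual arrow $\rho^*$ for every minimal relation $\rho$, and take $W=\sum_\rho \rho\cdot\rho^*$. The goal is then to match each resulting pair $(\Q,W)$, up to right-equivalence, with one of the quivers with potential listed in Figures~\ref{fig:2222}, \ref{fig:244} or \ref{fig:236}.

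To keep the verification tractable I would exploit the fact that $2$-APR-(co)tilting on $\Lambda$ induces mutation of $\Pi_3(\Lambda)$ as a quiver with potential at the corresponding vertex. Since selfinjectivity forces such mutations to be carried out along entire Nakayama orbits, the thick-edge data in Figures~\ref{fig:2222-2-APR}, \ref{fig:244-2RF}, \ref{fig:236-2RF} is transported precisely to the thick-edge data in Figures~\ref{fig:2222}, \ref{fig:244}, \ref{fig:236}. It therefore suffices to identify $\Pi_3(\Lambda)$ with the asserted Jacobian algebra for a single representative per mutation class and per tubular type, and then propagate along the mutation graph; completeness is guaranteed by the bijection established in the first step.

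The main obstacle is the tubular type $\p=(2,2,2,2;\lambda)$, where the cross-ratio parameter $\lambda$ must be tracked through the $2$-APR mutations, and one has to verify that the six parameter transformations $\lambda \mapsto 1-\lambda,\ \tfrac{1}{\lambda},\ \tfrac{1}{1-\lambda},\ \tfrac{\lambda}{1-\lambda},\ \tfrac{\lambda-1}{\lambda}$ account for all identifications among Jacobian algebras. This should follow from the action of the symmetric group $S_4$ on the four exceptional points of $\XX$ via the cross-ratio, together with direct inspection of the potentials labelled by $\lambda$ in Figure~\ref{fig:2222}. Once this is in place, collating the results for all three tubular types yields the stated classification.
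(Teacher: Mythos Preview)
Your overall strategy is sound and close to the paper's, but there is a concrete error in the third paragraph. You claim that $2$-APR-(co)tilting on $\Lambda$ induces mutation of $\Pi_3(\Lambda)$ at the corresponding vertex; it does not. By Theorem~\ref{thm:2-APR-QPs}, $2$-APR tilting at a sink changes only the grading $d$ on the graded quiver with potential $(\tilde{Q},W,d)$, leaving the underlying pair $(\tilde{Q},W)$, and hence $\Pi_3(\Lambda)\cong\Jac(\tilde{Q},W)$, invariant. Equivalently, $\nu_2^{-1}=\tau^{-1}[1]$ becomes the identity in $\mathcal{C}_\XX$, so a $2$-APR tilt does not move the underlying cluster-tilting object. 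It follows that the map $\Lambda\mapsto\Pi_3(\Lambda)$ in your first paragraph is only a surjection, not a bijection: its fibres are exactly the $2$-APR equivalence classes. Consequently the thick edges in Figures~\ref{fig:244-2RF} and~\ref{fig:236-2RF} (which record $2$-APR tilts at single vertices) do not transport to the thick edges in Figures~\ref{fig:244} and~\ref{fig:236} (which record genuine QP mutations along Nakayama orbits, i.e.\ mutations of cluster-tilting objects in $\mathcal{C}_\XX$); rather, whole $2$-APR components collapse to single vertices on the Jacobian side.

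Once this is corrected your plan still succeeds: compute $\Pi_3(\Lambda)$ for one $\Lambda$ per $2$-APR component and match against the listed quivers with potential; surjectivity is all that is needed for completeness. The paper takes a slightly more direct route: rather than pass through Theorem~\ref{intro:classification-2RF}, it works from the classification of $\tau^2$-stable tilting \emph{sheaves} (Theorem~\ref{thm:classification-tau2X}, Figures~\ref{fig:244-multi} and~\ref{fig:236-multi}), applies Proposition~\ref{prop:EndC-Pi3} and Theorem~\ref{thm:tildeLambda}, and cites \cite{herschend_selfinjective_2011} for the explicit identification of the resulting Jacobian algebras with the quivers with potential in Figures~\ref{fig:2222}, \ref{fig:244} and~\ref{fig:236}.
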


The algebras listed in Theorem \ref{intro:classification-ct-can}
already appeared in related contexts: Figure \ref{fig:2222} is
precisely the exchange graph of endomorphism algebras of
cluster-tilting objects the cluster category associated to a weighted
projective line of type $(2,2,2,2;\lambda)$, see
\cite{barot_tubular_2012}. In addition, Figures \ref{fig:244} and
\ref{fig:236} appeared in \cite[Figs. 3 and
2]{herschend_selfinjective_2011} respectively.

We conclude this section by fixing our conventions and notation.
Throughout this article we work over an algebraically closed field
$K$.  If $\Lambda$ is a finite dimensional $K$-algebra, we denote by
$\mod\Lambda$ the category of finitely generated right
$\Lambda$-modules.  If $\Lambda$ is a basic algebra, we denote its
Gabriel quiver by $Q_\Lambda$.  More generally, if $X$ is a basic
object in a Krull-Schmidt $K$-linear category $\A$, we denote by $Q_X$
the Gabriel quiver of the algebra $\End_\A(X)$.  If $\A$ is an abelian
category, we denote by $\Db(\A)$ the bounded derived category of $\A$
and we identify $\A$ with the full subcategory of $\Db(\A)$ given by
the complexes concentrated in degree zero.

\section{Preliminaries}

\subsection{Coherent sheaves over a weighted projective line}

We recall the construction of the category of coherent sheaves over a
weighted projective line together with its basic properties.  We
follow the exposition of \cite{lenzing_weighted_2011}.

Choose a \emph{parameter sequence}
$\blambda=(\lambda_1,\dots,\lambda_t)$ of pairwise distinct points of
$\P^1_K$ and a \emph{weight sequence} $\p=(p_1,\dots,p_t)$ of positive
integers.  Without loss of generality, we assume that $t\geq3$ and
that for each $i\in\set{1,\dots,t}$ we have $p_i\geq 1$. Moreover, we
may also assume that $\lambda_1=\infty$, $\lambda_2=0$ and
$\lambda_3=1$.  For convenience, we set $p:=\lcm(p_1,\dots,p_t)$. We
call the triple $\XX=(\P^1_K, \blambda,\p)$ a \emph{weighted
  projective line of weight type $\p$}.

The category $\cohX$ of \emph{coherent sheaves over $\XX$} is defined
as follows. Consider the rank 1 abelian group $\LL=\LL(\p)$ with
generators $\vecx_1,\dots,\vecx_t,\vecc$ subject to the relations
\[
p_1\vecx_1=\cdots=p_t\vecx_t=\vecc.
\]
The element $\vecc$ is called the \emph{canonical element of $\LL$}.
It follows that every $\vecx\in\LL$ can be written uniquely in the
form
\[
\vecx = m\vecc + \sum_{i=1}^t m_i\vecx_i
\]
where $m\in\ZZ$ and $0\leq m_i<p_i$ for each $i\in\set{1,\dots,t}$.
Hence $\LL$ is an ordered group with positive cone $\sum_{i=1}^t\NN
x_i$, and that for every $\vecx\in\LL$ we have either $0\leq \vecx$ or
$\vecx\leq \vecc +\vecw$ where
\[
\vecw:=(t-2)\vecc-\sum_{i=1}^t x_i
\]
is the \emph{dualizing element of $\XX$}.

Next, consider the $\LL$-graded algebra $K[x_1,\dots,x_t]$ where
$\deg{x_i}=\vecx_i$ for each $i\in\set{1,\dots,t}$. When $t=3$, we
write $x=x_1$, $y=x_2$, $z=x_3$ and relabel the generators of $\LL$
accordingly. Let $I=(f_3,\dots,f_t)$ be the homogeneous ideal of
$K[x_1,\dots,x_t]$ generated by all the \emph{canonical relations}
\[
f_i=x_i^{p_i}-\lambda_i'x_2^{p_2}-\lambda_i''x_1^{p_1}.
\]
Consequently, we obtain an $\LL$-graded algebra
$R=R(\blambda,\p):=K[x_1,\dots,x_t]/I$.  Note that the group $\LL$
acts by degree shift on the category $\gr^\LL R$ of finitely generated
$\LL$-graded $R$-modules. Namely, given an $\LL$-graded $R$-module $M$
and $\vecx\in\LL$ we denote by $M(\vecx)$ the $R$-module with grading
$M(\vecx)_{\vecy}:=M_{\vecx+\vecy}$.

Let $\gr^{\LL}R$ be te category of finitely generated $\LL$-graded
$R$-modules. Note that $\LL$ acts on $\gr^\LL R$ by degree shift:
given $\vecx\in\LL$ and $M\in\gr^\LL R$, we define $M(\vecx)\in\gr^\LL
R$ to be the $R$-module with $M$ with new grading
$M(\vecx)_{\vecy}:=M_{\vecx+\vecy}$. The category $\cohX$ is defined
as the localization $\qgr^{\LL}R$ of $\gr^{\LL}R$ by its Serre
subcategory $\gr_0^{\LL}R$ of finite dimensional $\LL$-graded
$R$-modules.  We denote the image of a module $M$ under the
canonical quotient functor $\gr^{\LL}R\to\qgr^{\LL}R$ by
$\widetilde{M}$.  It follows that the action of $\LL$ on $\gr^\LL R$
induces an action on $\cohX$ given by
$\widetilde{M}(\vecx):=(M(\vecx))^{\sim}$.  We call
$\OO=\OO_\XX:=\widetilde{R}$ the \emph{structure sheaf of $\XX$}.

\begin{theorem}
  \label{thm:props-cohX}
  \cite{geigle_class_1985} \cite[Thm. 2.2]{lenzing_weighted_2011} The
  category $\cohX$ is connected, $\Hom$-finite, $K$-linear and
  abelian.  Moreover we have the following:
  \begin{enumerate}
  \item The category $\cohX$ is hereditary, \ie we have
    $\Ext_\XX^i(-,-)=0$ for all $i\geq 2$.
  \item (Serre duality) Let $\tau:\cohX\to\cohX$ be the
    autoequivalence given by $E\mapsto E(\vecw)$. Then, there is a
    bifunctorial isomorphism
    \[
    D\Ext_\XX^1(X,Y) \cong \Hom_\XX(Y,\tau X).
    \]
    We call $\tau$ the \emph{Auslander-Reiten translation} of $\cohX$.
  \item\label{it:decomposition} Let $\coh_0\XX$ be the full
    subcategory of $\coh\XX$ of sheaves of finite length
    (=\emph{torsion sheaves}). Also, let $\vect\XX$ be the full
    subcategory of $\cohX$ of sheaves with no non-zero torsion
    subsheaves (=\emph{vector bundles}).  Then, each $X\in\cohX$ has a
    unique decomposition $X=X_+\oplus X_0$ where $X_+\in\vect\XX$ and
    $X_0\in\coh_0\XX$.
  \item\label{it:simples} The simple objects in $\coh_0\XX$ are
    parametrized by $\P^1_K$ as follows: For each
    $\lambda\in\P^1_K\setminus\blambda$ there exist a unique simple
    sheaf $S_\lambda$ called the \emph{ordinary simple concentrated at
      $\lambda$}, and for each $\lambda_i\in\blambda$ there exist
    $p_i$ \emph{exceptional} (\ie not ordinary) simple sheaves
    $S_{\lambda,1},\dots,S_{\lambda,p_i}$ defined by a short exact
    sequence
    \[
    \begin{tikzcd}
      0\rar&\OO(-m\vecx_i)\rar&\OO((1-m)\vecx_i)\rar& S_{\lambda_i,m}\rar&0
    \end{tikzcd}
    \]
    for $i\in\set{1,\dots,t}$ and $m\in\set{1,\dots,p_i}$.
  \item For each simple sheaf $S$ we have $\End_\XX(S)\cong K$. If $S$
    is an ordinary simple sheaf, then $\Ext_\XX^1(S,S)\cong K$. If $S$
    is an exceptional simple sheaf, then $\Ext_\XX^1(S,S)=0$.
  \item Let $\lambda\in\P^1_K$. The category $\T(\lambda)$ of all
    sheaves which have a finite filtration by simple sheaves
    concentrated at $\lambda$ form a standard tube. If
    $\lambda\notin\blambda$ then $\T(\lambda)$ has rank 1; if
    $\lambda=\lambda_i$, then $\T(\lambda)$ has rank $p_i$ .
  \item\label{it:morphisms} Let $\vec{a},\vec{b}\in\LL$. Then
    $\Hom_\XX(\OO(\vec{a}),\OO(\vec{b}))=R_{\vec{b}-\vec{a}}$. In
    particular, there is a non-zero morphism
    $\OO(\vec{a})\to\OO(\vec{b})$ if and only if
    $\vec{b}-\vec{a}\geq0$.
  \end{enumerate}
\end{theorem}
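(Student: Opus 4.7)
The plan is to reduce everything to computations in the $\LL$-graded commutative ring $R=K[x_1,\dots,x_t]/I$ and then transport the results to $\cohX$ via the Serre quotient $\qgr^\LL R=\gr^\LL R/\gr_0^\LL R$. The statements that $\cohX$ is connected, $K$-linear, Hom-finite and abelian are formal consequences of this construction together with the fact that $R$ is a Noetherian $\LL$-graded $K$-algebra, so I would dispose of these at the outset and concentrate on the structural items (a)--(g).

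My first substantial step would be to establish (g), since it underpins everything else. By the definition of the Serre quotient, $\Hom_\XX(\OO(\vec{a}),\OO(\vec{b}))$ is the colimit of $\Hom_R^\LL(M,R(\vec{b}))$ over graded submodules $M\subseteq R(\vec{a})$ of finite-dimensional cokernel; Noetherianity of $R$ and the fact that every such $M$ contains a sufficiently high shift of the positive cone show that this colimit stabilizes at $R_{\vec{b}-\vec{a}}$, giving the identification
\[
\Hom_\XX(\OO(\vec{a}),\OO(\vec{b}))=R_{\vec{b}-\vec{a}}.
\]
With (g) in hand I would tackle (a) by writing down a graded Koszul-type resolution of the simple module $R/(x_1,\dots,x_t)$: this shows that $R$ has $\LL$-graded global dimension $2$, and Serre-localizing by $\gr_0^\LL R$, whose objects are supported only at the irrelevant ideal, drops this dimension by one.

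I expect (b) to be the main obstacle. The cleanest route is to identify $R(\vecw)$ as the $\LL$-graded canonical module of $R$ --- the specific formula $\vecw=(t-2)\vecc-\sum_{i=1}^t\vecx_i$ is forced by the degrees appearing in the Koszul resolution --- and then apply $\LL$-graded local duality at the irrelevant ideal to translate $\Ext^1_\XX(X,Y)$ into the $K$-dual of $\Hom_\XX(Y,X(\vecw))$. Once the duality has been proved for line bundles using (g), it extends to every coherent sheaf because, by (a), each object of $\cohX$ admits a two-term resolution by finite direct sums of twists of $\OO$.

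The remaining items would then follow in sequence. For (d) the exceptional simples $S_{\lambda_i,m}$ are defined by the displayed sequence of line bundles, and the ordinary simple $S_\lambda$ for $\lambda\notin\blambda$ arises as the cokernel of an appropriate linear combination $x_1^{p_1}-\lambda x_2^{p_2}$ acting on $\OO$; hereditarity and the length computation of torsion sheaves show the list is complete. Part (e) is then a direct computation from these length-two resolutions together with (g): exceptional simples have no self-extensions because their resolutions are by distinct line bundles, whereas ordinary simples have one-dimensional self-extension coming from their rank-one stalks. For (c), hereditarity fits the maximal torsion subsheaf $X_0\subseteq X$ into $0\to X_0\to X\to X_+\to 0$, and Serre duality together with $\tau$-invariance of vector bundles forces $\Ext^1_\XX(X_+,X_0)=0$, so the sequence splits. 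Finally, (f) is obtained by reading off from the resolutions in (d) and the value of $\vecw$ that $\tau$ cyclically permutes the $p_i$ exceptional simples concentrated at $\lambda_i$, yielding a standard tube of rank $p_i$, while each ordinary simple is $\tau$-fixed and generates a homogeneous tube.
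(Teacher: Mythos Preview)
The paper does not prove this theorem; it merely records it with citations to Geigle--Lenzing and to Lenzing's survey and uses it as background. There is therefore no ``paper's own proof'' to compare your proposal against, and your outline is essentially a compressed version of the arguments found in those references.

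One point in your sketch is inaccurate as stated and would not go through as written: the claim that ``$R$ has $\LL$-graded global dimension $2$'' is false in general. For $t\geq 3$ the ring $R$ is a $2$-dimensional complete intersection that is not regular (e.g.\ type $(2,2,2,2)$ gives a singular affine surface), so its global dimension, graded or not, is infinite, and a Koszul complex on $x_1,\dots,x_t$ is not a resolution since the $x_i$ do not form a regular sequence. What is true is that $R$ is $\LL$-graded Gorenstein with $a$-invariant $\vecw$, and this is what actually drives both (a) and (b): graded local duality at the irrelevant maximal ideal gives the Serre duality isomorphism in the full form $D\Ext^i_\XX(X,Y)\cong \Ext^{1-i}_\XX(Y,\tau X)$, and taking $i\geq 2$ then yields hereditarity for free (the right-hand side lives in negative degree). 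So your plan for (b) via local duality is correct and in fact subsumes (a); you should reverse the order and drop the global-dimension claim. The remaining parts (c)--(g) of your sketch are fine.
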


The complexity of the classification of indecomposable sheaves $\cohX$
is controlled by its \emph{Euler characteristic}
\[
\chi(\XX):=2-\sum_{i=1}^t\left(1-\frac{1}{p_i}\right).
\]
Weighted projective lines of Euler characteristic zero will turn out
to be our main concern in this article.  An easy calculation shows
that $\chi(\XX)=0$ if and only if
\[
\mathbf{p}\in\set{(2,2,2,2),(3,3,3),(2,4,4),(2,3,6)},
\]
if and only if the dualizing element $\vecw$ has finite order
$p=\lcm(p_1,\dots,p_t)$ in $\LL$.  In this case, we say that $\XX$ has
\emph{tubular type} and it follows that $\tau$ acts periodically on
each connected component of the Auslander-Reiten quiver of $\cohX$.

Let $\K_0(\XX)$ be the Grothendieck group of $\cohX$.  There are two
important linear forms $\rk$ and $\deg$ on $\K_0(\XX)$.  We refer the
reader to \cite[Sec. 2.2]{lenzing_weighted_2011} for information on
these numerical invariants.  The rank $\rk\colon\K_0(\XX)\to\ZZ$ is
characterized by the property $\rk(\OO(\vecx))=1$ for each $\vecx$ in
$\LL$.  The degree $\deg\colon\K_0(\XX)\to\ZZ$ is characterized by the
property $\deg(\OO(\vecx))=\delta(\vecx)$ where
$\delta\colon\LL\to\ZZ$ is the unique group homomorphism sending each
$\vecx_i$ to $p/p_i$.  Note that we have
\begin{equation}
  \label{eq:delta-euler}
  \chi(\XX)=\frac{-\delta(\vecw)}{p}.
\end{equation}
Moreover, if a sheaf $X\in\cohX$ satisfies $\deg(X)=\rk(X)=0$, then
$X=0$. The slope of a non-zero sheaf $X$is defined as
$\slope(X):=\rk(X)/\deg(X)\in\QQ\cup\set{\infty}$.

\begin{proposition}
  \label{prop:slope}
  \cite[Lemma. 2.5]{lenzing_weighted_2011} For each non-zero $X\in\vect X$ we
  have $\slope(\tau X) = \slope(X) + \delta(\vecw)$.
\end{proposition}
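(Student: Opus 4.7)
The plan is to reduce the problem to the behavior of rank and degree under the twist autoequivalence $(\vecw)\colon\cohX\to\cohX$. Since $\tau X = X(\vecw)$ by Theorem~\ref{thm:props-cohX}(b), the proposition will follow at once from the two identities
\[
\rk(X(\vecw)) = \rk(X) \qquad\text{and}\qquad \deg(X(\vecw)) = \deg(X) + \rk(X)\,\delta(\vecw),
\]
valid for every non-zero $X\in\vect\XX$: dividing the second by the first then gives $\slope(\tau X) = \slope(X) + \delta(\vecw)$.

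I would first verify these identities for a line bundle $X=\OO(\vecy)$, where they are immediate from the defining normalizations of $\rk$ and $\deg$ recalled in the text. Indeed, $\OO(\vecy)(\vecw) = \OO(\vecy+\vecw)$, so $\rk(\OO(\vecy)(\vecw)) = 1 = \rk(\OO(\vecy))$ and
\[
\deg(\OO(\vecy)(\vecw)) = \delta(\vecy+\vecw) = \delta(\vecy) + \delta(\vecw) = \deg(\OO(\vecy)) + \rk(\OO(\vecy))\,\delta(\vecw).
\]

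To extend to an arbitrary vector bundle $X\in\vect\XX$, I would invoke the classical fact (see \cite{lenzing_weighted_2011}) that every vector bundle on a weighted projective line admits a finite filtration $0 = X_0 \subset X_1 \subset \cdots \subset X_n = X$ whose successive quotients $X_i/X_{i-1}$ are line bundles. The twist $(\vecw)$ is an exact autoequivalence, and both $\rk$ and $\deg$ are additive on short exact sequences since they factor through $\K_0(\XX)$. A straightforward induction on $n$ therefore propagates the two identities above from line bundles to $X$: at each stage one twists the short exact sequence $0\to X_{i-1}\to X_i\to X_i/X_{i-1}\to 0$, and the additive correction term $\rk(X_i)\,\delta(\vecw)$ in the degree formula assembles as the sum of the corresponding corrections for $X_{i-1}$ and for $X_i/X_{i-1}$.

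The only slightly delicate point is the filtration of vector bundles by line bundles, which I would import from the literature rather than reprove; everything else is elementary bookkeeping with the linear forms $\rk$ and $\deg$ on $\K_0(\XX)$.
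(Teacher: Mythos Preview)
The paper does not supply a proof of this proposition; it is simply quoted from \cite{lenzing_weighted_2011}. Your argument is correct and is essentially the standard one: the two identities $\rk(X(\vecw))=\rk(X)$ and $\deg(X(\vecw))=\deg(X)+\rk(X)\,\delta(\vecw)$ are immediate for line bundles from the normalizations of $\rk$ and $\deg$, and then extend by additivity. One small simplification: you do not actually need the line-bundle filtration of $X$. Both sides of each identity define $\ZZ$-linear functions on $\K_0(\XX)$ (the twist $(\vecw)$ induces an automorphism of $\K_0(\XX)$), and since the classes $[\OO(\vecx)]$ for $0\leq\vecx\leq\vecc$ already form a $\ZZ$-basis of $\K_0(\XX)$, verifying the identities on line bundles suffices for all $X$. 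Finally, note that the paper's displayed formula $\slope(X)=\rk(X)/\deg(X)$ is a typographical slip; the intended (and standard) definition is $\slope(X)=\deg(X)/\rk(X)$, which is exactly what your division step uses.
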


A complex $T$ in $\Db(\cohX)$ is called a \emph{tilting complex} if
$\Ext_{\Db(\XX)}^i(T,T)=0$ for all $i\neq0$ and if the conditions
$\Ext_{\Db(\XX)}^i(T,X)=0$ for all $i\in\ZZ$ imply that
$X=0$. Equivalently, $T$ is a tilting complex if and only if $T$ is
rigid, \ie $T$ has no non-zero self-extensisons, and the number of pairwise non-isomorphic indecomposable direct
summands of $T$ equals $2+\sum_{i=1}^t (p_i-1)$, the rank of
$\K_0(\XX)$.

The vector bundle
\[
T=T_{\OO} := \bigoplus_{0\leq\vecx\leq\vecc} \OO(\vecx)
\]
is a tilting sheaf whose endomorphism algebra is precisely
$\Lambda=\Lambda(\blambda,\p)$, the \emph{canonical algebra of type
  $(\blambda,\p)$}, see Figure \ref{fig:canonical-algebra-2222} for an
example.
\begin{figure}
  \begin{center}
    \begin{tikzpicture}[commutative diagrams/every diagram]
      \node (a) at (180:2) {$\OO$}; \node (b) at (90:2)
      {$\OO(\vecx_1)$}; \node (c) at (0:2) {$\OO(\vecc)$}; \node (d)
      at (90:1) {$\OO(\vecx_2)$}; \node (e) at (90:-1)
      {$\OO(\vecx_3)$}; \node (f) at (90:-2) {$\OO(\vecx_4)$};
      \path[commutative diagrams/.cd, every arrow, every label] (a)
      edge node {$x_1$} (b) (b) edge node {$x_1$} (c) (a) edge
      node[swap] {$x_2$} (d) (d) edge node[swap] {$x_2$} (c) (a) edge
      node {$x_3$} (e) (e) edge node {$x_3$} (c) (a) edge node[swap]
      {$x_4$} (f) (f) edge node[swap] {$x_4$} (c) ;
    \end{tikzpicture}
    \caption{The Gabriel quiver of the endomorphism algebra of the
      canonical tilting bundle for $\p=(2,2,2,2;\lambda)$.}
    \label{fig:canonical-algebra-2222}
  \end{center}
\end{figure}
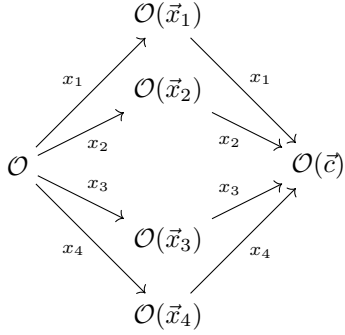
It follows that the bounded derived categories $\Db(\cohX)$ and
$\Db(\mod\Lambda)$ are equivalent as triangulated categories.

% \begin{proposition}
%   \label{prop:hubner-rank}
%   \cite[Thm. 3.2]{hubner_rank_1998} Let $T\in\cohX$ be a basic
%   tilting sheaf where $\XX$ is an arbitrary weighted projective
%   line.  Then we have
%   \[
%   2\rk(T_i) = \sum_{i\to j} \rk(T_j) + \sum_{j\to i} \rk(T_j) -
%   \sum_{i\cdots j} \rk(T_j) - \sum_{j\cdots i} \rk(T_j),
%   \]
%   where the sums are taken with respect to arrows and relations in
%   $\End_\XX(T)$.
% \end{proposition}

\begin{proposition}
  \label{prop:normal-position}
  \cite[Cor. 3.5]{lenzing_tilting_1996}. Let $\XX$ be a weighted
  projective line of tubular type and $T$ a tilting sheaf in
  $\cohX$. Then there exists an automorphism
  $F\colon\Db(\cohX)\to\Db(\cohX)$ such that $FT\in\cohX$ has a simple
  sheaf as a direct summand.
\end{proposition}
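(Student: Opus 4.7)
In the tubular case, equation \eqref{eq:delta-euler} gives $\delta(\vecw)=0$, so Proposition \ref{prop:slope} yields $\slope(\tau X)=\slope(X)$ for every non-zero $X\in\vect\XX$. Hence $\tau$ preserves slopes and each tubular family $\T(\lambda)$ is $\tau$-stable. Moreover, the auto-equivalence group $\Aut(\Db(\cohX))$ in the tubular case is particularly rich: besides degree shifts by $\LL$ and powers of $\tau$, it contains tubular mutations, and these combine to act transitively on the set of slopes $\QQ\cup\{\infty\}$ of indecomposable sheaves via an $\SL(2,\ZZ)$-action on the rank--degree plane $\K_0(\XX)\otimes\QQ$. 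This transitivity will be the main tool.

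The plan is as follows. First, pick any indecomposable direct summand $E$ of $T$, with slope $q=\slope(E)$. Using the transitive action on slopes, choose an auto-equivalence $F_1$ of $\Db(\cohX)$ sending $q$ to the slope of a torsion sheaf, so that $F_1 E$ is torsion, concentrated in some tube $\T(\mu)$ of rank $r$. If $r=1$ (ordinary tube), then $F_1 E$ is already simple and $F=F_1$ works. If $r>1$, apply a further auto-equivalence in order to arrange that $F_1 E$ is at the mouth of its tube; since powers of $\tau$ alone preserve quasi-length, one composes with degree shifts by the generators $\vecx_i$ of $\LL$ (which move objects between tubes of the same slope) and with tubular mutations in order to replace $F_1 E$ by a quasi-simple, i.e., a simple sheaf, in a tube of the appropriate rank. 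Composing these auto-equivalences gives the required $F$.

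The main obstacle is ensuring that $FT$ lies in $\cohX$ rather than being a genuine complex in $\Db(\cohX)$. Here one uses that $T$ is rigid and that $\cohX$ is hereditary by Theorem \ref{thm:props-cohX}: the summands of $T$ occupy a controlled range of slopes, and the rigidity condition $\Ext_\XX^1(T_i,T_j)=0$ further constrains their distribution. Invoking this slope-window control, one refines $F$ by composing with an appropriate degree shift in $\LL$ so that $FT$ remains concentrated in cohomological degree zero. The secondary subtlety is the quasi-length reduction in the second step, which is not achievable from $\tau$ and $\LL$-shifts alone and genuinely requires the tubular mutation auto-equivalences available only in the tubular case.
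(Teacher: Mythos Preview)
The paper does not give its own proof of this proposition; it simply cites \cite[Cor.~3.5]{lenzing_tilting_1996}. So there is nothing to compare your argument against directly, but your sketch has a genuine gap that you should be aware of.

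The problem is in your second step, the ``quasi-length reduction''. You correctly note that powers of $\tau$ preserve quasi-length, but then assert that degree shifts by $\vecx_i$ and tubular mutations can be used to replace $F_1E$ by a quasi-simple. This is false: \emph{every} autoequivalence of $\Db(\cohX)$ preserves the Auslander--Reiten quiver, hence maps tubes to tubes of the same rank and preserves quasi-length. Degree shifts by elements of $\LL$ act on each tube $\T(\lambda)$ as rotations, and tubular mutations, being triangle equivalences, likewise send an indecomposable of quasi-length $\ell$ to an indecomposable of quasi-length $\ell$. Consequently, if you start with an arbitrary indecomposable summand $E$ of $T$ and $E$ happens to have quasi-length $\ell\geq 2$ in its tube, no autoequivalence will ever turn it into a simple sheaf.

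What is actually needed is an argument that $T$ already possesses an indecomposable summand of quasi-length~$1$ in its tube (at some slope). Once that is established, your first step---using the transitive $\mathrm{SL}(2,\ZZ)$-action on slopes to move that summand's slope to $\infty$---does the job. The Lenzing--Meltzer argument obtains such a quasi-simple summand from the structure of tilting sheaves in the tubular case: the summands at an extremal slope, together with the tilting property, force the presence of a quasi-simple there. Your sketch bypasses this point entirely by picking $E$ arbitrarily, which is where it breaks down. A secondary vagueness is the claim that a further $\LL$-shift keeps $FT$ in $\cohX$; degree shifts do not change cohomological degree, so this part also needs a more careful argument about how the chosen autoequivalence interacts with the slope interval occupied by $T$.
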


Let $\XX$ be a weighted projective line of tubular type and $T$ a
tilting sheaf in $\cohX$. We say that $T$ is in \emph{normal position}
if $T_0\neq0$, see Theorem
\ref{thm:props-cohX}\eqref{it:decomposition} and Theorem
\ref{prop:normal-position}.

The result below collects further properties of $\cohX$ which are
needed to prove Theorem \ref{thm:classification-tau2X}.

\begin{theorem}
  \label{thm:techincal-general}
  \cite{geigle_perpendicular_1991,lenzing_weighted_2011} Let $\XX$ be
  a weighted projective line of weight type $(p_1,\dots,p_t)$ and $T$
  a tilting sheaf in normal position. Then, the following statements
  hold:
  \begin{enumerate}
  \item\label{it:perpendicular-category} Let $q_i$ be the number of
    indecomposable direct summands of $T_0$ in $\T(\lambda_i)$. Then,
    the perpendicular category
    \[
    T_0^\perp:=\setP{X\in\cohX}{\Hom_\XX(T_0,X)=0\text{ and
      }\Ext_\XX^1(T_0,X)=0}
    \]
    is equivalent to $\coh\YY$ where $\YY$ is a weighted projective
    line of weight type $(p_1-q_1,\dots,p_t-q_t)$.
  \item\label{it:Y-H} If $\XX$ has tubular type, then
    $\chi(\YY)>0$. In this case, $\coh\YY$ is derived equivalent to
    $\mod H$ for a tame heredirary algebra of extended Dynkin type
    $\Delta$, and the Auslander-Reiten quiver of $\vect\YY$ has shape
    $\ZZ\Delta$.
  \item\label{it:embedding-preserves-vect-coh0} The embedding
    $\coh\YY\cong T_0^\perp\subset\cohX$ preserves line bundles and
    torsion sheaves. That is, we have $\vect\YY\cong(\vect\XX\cap
    T_0^\perp)$ and $\coh_0\YY\cong(\coh_0\XX\cap T_0^\perp)$.
  \item\label{it:action-vecY} Let $\vecx\in\LL$ be such that
    $T(\vecx)\cong T$. Then the functor $?(\vecx)\colon\cohX\to\cohX$
    induces an action on $\coh\YY$ which acts freely on line bundles
    in $\coh\YY$.
  \item\label{it:line-bundle-as-summand} The sheaf $T_+$ is a tilting
    bundle in $\coh\YY$. If $\XX$ has tubular type, then $T_+$
    contains a line bundle as a direct summand.
  \end{enumerate}
\end{theorem}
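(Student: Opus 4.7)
The theorem is a compilation of known results, so the plan is to deduce each of the five assertions from the Geigle--Lenzing perpendicular theorem \cite{geigle_perpendicular_1991} together with standard facts about $\cohX$ surveyed in \cite{lenzing_weighted_2011}. Assertion (a) is the structural backbone; (c) is a refinement of (a); and (b), (d), (e) follow once $\YY$ has been identified.

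For (a), I would split $T_0 = \bigoplus_{i=1}^t T_0^{(i)}$ according to the tubes, so that $T_0^{(i)}$ is a direct sum of $q_i$ indecomposable rigid sheaves in $\T(\lambda_i)$. Rigidity forces $q_i \leq p_i - 1$, and a direct calculation inside the standard tube of rank $p_i$ shows that the right perpendicular of $T_0^{(i)}$ is again a standard tube, now of rank $p_i - q_i$. Gluing these local pictures by the Geigle--Lenzing perpendicular theorem realises $T_0^\perp$ as $\coh\YY$ for the weighted projective line $\YY$ whose parameter sequence is $\blambda$ and whose weight sequence is $(p_1 - q_1, \dots, p_t - q_t)$. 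Part (c) comes along for free: the decomposition $X = X_+ \oplus X_0$ of Theorem \ref{thm:props-cohX}\eqref{it:decomposition} is respected because both summands of any $X \in T_0^\perp$ remain in $T_0^\perp$, and the rank function is preserved under the inclusion $T_0^\perp \hookrightarrow \cohX$, so line bundles in $\coh\YY$ are precisely those in $\vect\XX \cap T_0^\perp$.

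For (b), substituting the new weights into the Euler characteristic formula yields
\[
\chi(\YY) - \chi(\XX) = \sum_{i=1}^t\left(\frac{1}{p_i - q_i} - \frac{1}{p_i}\right),
\]
which is strictly positive because $T_0 \neq 0$ forces some $q_i \geq 1$ and $\chi(\XX) = 0$. It is then classical that a weighted projective line of positive Euler characteristic is derived equivalent to $\mod H$ for a tame hereditary algebra of some extended Dynkin type $\Delta$, and that the AR quiver of $\vect\YY$ has shape $\ZZ\Delta$ \cite{lenzing_weighted_2011}. For (d), $T(\vecx) \cong T$ implies $T_0(\vecx) \cong T_0$, so the autoequivalence $?(\vecx)$ of $\cohX$ preserves $T_0^\perp$ and restricts to an autoequivalence of $\coh\YY$; freeness on line bundles follows from Theorem \ref{thm:props-cohX}\eqref{it:morphisms}, which says that $\LL$ already acts freely on the line bundles of $\cohX$. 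For (e), $T_+ \in \vect\XX \cap T_0^\perp = \vect\YY$ by (c) and is rigid since $T = T_+ \oplus T_0$ is; a count of indecomposable summands gives $2 + \sum_i(p_i - q_i - 1)$, matching the rank of $\K_0(\YY)$, so $T_+$ is a tilting bundle. The final assertion combines (b) with Proposition \ref{prop:slope}: tilting bundles on a weighted projective line of positive Euler characteristic are known to possess a line bundle direct summand via slope-minimality considerations in the tame hereditary setting.

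The main obstacle is the identification in (a): one must verify that the perpendicular of $T_0^{(i)}$ inside the tube $\T(\lambda_i)$ is again a standard tube of the predicted reduced rank, and that these local perpendicular categories glue into the coherent sheaves on a single weighted projective line $\YY$. Everything else is either bookkeeping with Euler characteristic and ranks, or routine manipulation of the $\LL$-action.
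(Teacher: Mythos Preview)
Your proposal is correct and follows essentially the same approach as the paper: parts (a) and (c) are attributed to \cite{geigle_perpendicular_1991} (you sketch the tube-by-tube reduction that underlies that citation), part (b) is the same Euler-characteristic computation plus citation to \cite{lenzing_weighted_2011}, and your arguments for (d) and (e) match the paper's almost verbatim, including the key step $T(\vecx)\cong T \Rightarrow T_0(\vecx)\cong T_0$ and the rank-of-$\K_0(\YY)$ count. The only cosmetic difference is that for the line-bundle summand in (e) the paper cites \cite[Cor.~3.7]{lenzing_weighted_2011} directly rather than invoking slope minimality.
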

\begin{proof}
  Statements \eqref{it:perpendicular-category} and
  \eqref{it:embedding-preserves-vect-coh0} are shown in
  \cite[Thm. 9.5 and Prop. 9.6]{geigle_perpendicular_1991}.

  \eqref{it:Y-H} The first claim is a straightforward computation. The
  remaining statements are shown for example in \cite[Thm. 3.5,
  Cor. 3.6]{lenzing_weighted_2011}.

  \eqref{it:action-vecY} First, note that the group $\LL$ acts freely
  on line bundles in $\cohX$ be degree shift. Moreover, this action
  preserves $\vect\XX$ and $\coh_0\XX$. Let $\vecx\in\LL$ be such that
  $T(\vecx)\cong T$. Since we have $T_0(\vecx)\cong T_0$, it follows
  that $(\vecx)$ induces an action on $T_0^\perp\cong\coh\YY$. By part
  \eqref{it:embedding-preserves-vect-coh0} this action acts freely on
  line bundles in $\coh\YY$.
  
  \eqref{it:line-bundle-as-summand} The first claim follows since
  $T_+$ is also rigid in $T_0^\perp\subset\cohX$ and the number of
  indecomposable direct summands of $T_+$ coincides with the rank of
  $\K_0(\YY)$.  The second claim follows since $\chi(\YY)>0$, and
  hence every tilting bundle in $\coh\YY$ contains a line bundle as a
  direct summand, see \cite[Cor. 3.7]{lenzing_weighted_2011}.
\end{proof}

We have the following simple oveservation regarding $\tau^2$-stable
rigid sheaves.

\begin{lemma}
  \label{lemma:T0-semisimple}
  Let $\XX$ be a weighted projective line and $X\in\cohX$ be a
  $\tau^2$-stable rigid sheaf. Then, each indecomposable direct
  summand of $X$ is an exceptional simple sheaf.
\end{lemma}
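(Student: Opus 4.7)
Plan: I would first decompose $X = X_+ \oplus X_0$ into bundle and torsion parts via Theorem~\ref{thm:props-cohX}\eqref{it:decomposition}. Since $\tau$ preserves both $\vect\XX$ and $\coh_0\XX$, the uniqueness of this decomposition forces $\tau^2 X_+ = X_+$ and $\tau^2 X_0 = X_0$, and both pieces inherit rigidity as summands of $X$. The main content is to show that every indecomposable summand of $X$ lying in $\coh_0\XX$ is an exceptional simple sheaf.

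So let $Y$ be an indecomposable summand of $X$ with $Y \in \coh_0\XX$. By Theorem~\ref{thm:props-cohX}, $Y$ lies in a unique standard tube $\T(\lambda)$ with $\lambda \in \P^1_K$. If $\lambda \notin \blambda$, then $\T(\lambda)$ has rank $1$ and every indecomposable in it is $\tau$-stable, so Serre duality yields
\[
\Ext_\XX^1(Y,Y) \cong D\Hom_\XX(Y, \tau Y) \cong D\End_\XX(Y) \neq 0,
\]
contradicting rigidity of $Y$. Hence $\lambda = \lambda_i$ for some $i$, and $Y$ lies in the rank-$p_i$ exceptional tube.

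Let $\ell \geq 1$ denote the quasi-length of $Y$. The standard structure of a tube provides a unique quasi-simple sub-object $Y' \subset Y$, and the quotient $Y/Y'$ is an indecomposable of quasi-length $\ell-1$ which is isomorphic to the unique sub-object of $\tau Y$ of quasi-length $\ell - 1$ (both are identified by tracing the canonical ray in the tube that supports $Y$). For $\ell \geq 2$ this quotient is non-zero, so composing the projection with the inclusion produces a non-zero map $Y \twoheadrightarrow Y/Y' \hookrightarrow \tau Y$, whence $\Hom_\XX(Y, \tau Y) \neq 0$. On the other hand, Serre duality and the rigidity of $Y$ give $\Hom_\XX(Y, \tau Y) \cong D\Ext_\XX^1(Y, Y) = 0$, a contradiction. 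Hence $\ell = 1$, so $Y \cong S_{\lambda_i, m}$ is an exceptional simple sheaf.

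The main obstacle is the identification of $Y/Y'$ with the length $\ell-1$ sub-object of $\tau Y$ inside a standard tube; once this structural fact is extracted from the standard description of tubes, everything reduces to a single application of Serre duality. Note in particular that the $\tau^2$-stability hypothesis is not used in this reduction and only intervenes in the ambient classification.
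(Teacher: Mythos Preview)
Your tube identification is wrong, and this is precisely the point where the $\tau^2$-stability hypothesis enters---contrary to your closing remark. If $Y'$ is the quasi-socle of $Y$, then $Y/Y'$ is the length-$(\ell-1)$ subobject of $\tau^{-1}Y$, \emph{not} of $\tau Y$. (Trace the mesh in the tube: the almost split sequence $0\to\tau Y\to E\to Y\to 0$ has as its length-$(\ell-1)$ middle summand the radical of $Y$, which is a \emph{quotient} of $\tau Y$; dually, $Y/\operatorname{soc}(Y)$ is a \emph{sub} of $\tau^{-1}Y$.) So your construction actually yields $\Hom_\XX(Y,\tau^{-1}Y)\neq0$, equivalently $\Hom_\XX(\tau Y,Y)\neq0$, for $\ell\geq 2$. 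This does \emph{not} contradict rigidity: Serre duality gives only $\Hom_\XX(Y,\tau Y)\cong D\Ext^1_\XX(Y,Y)=0$, and says nothing about $\Hom_\XX(\tau Y,Y)$. Indeed the claimed conclusion is false without $\tau^2$-stability: in any exceptional tube of rank $p\geq 3$ the indecomposables of quasi-length $2,\dots,p-1$ are rigid but not simple.

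The repair is exactly the paper's argument. From $\tau^2X\cong X$ one gets
\[
\Hom_\XX(\tau X,X)\;\cong\;\Hom_\XX(\tau X,\tau^2X)\;\cong\;\Hom_\XX(X,\tau X)\;\cong\;D\Ext^1_\XX(X,X)=0,
\]
and hence $\Hom_\XX(\tau Y,Y)=0$ for every indecomposable summand $Y$ of $X$. Combining this with the (now correctly oriented) fact that $\Hom_\XX(\tau Y,Y)\neq0$ whenever $Y$ lies in a tube with quasi-length $\ell\geq2$ forces $\ell=1$. Your treatment of the homogeneous tubes is fine; it is only the exceptional-tube step that needs the hypothesis you tried to discard.
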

\begin{proof}
  Firstly, by Theorem \ref{thm:props-cohX}\eqref{it:morphisms} there
  are no rigid sheaves in an exceptional tube of rank 1. Secondly,
  since $X$ is a rigid $\tau^2$-stable sheaf, we have that
  \[
  \Ext_\XX^1(X,X)\cong D\Hom_\XX(X,\tau X)\cong D\Hom_\XX(\tau X,
  X)=0.
  \]
  Let $Y$ be an indecomposable direct summand of $X$. Then we have
  $\Hom_\XX(\tau Y,Y)=0$. This is happens if and only if $Y$ is an
  exceptional simple sheaf.
\end{proof}

The Auslander-Reiten translation of $\cohX$ extends to an
autoequivalence
\[
\tau\colon\Db(\cohX)\to\Db(\cohX).
\]
Moreover, the autoequivalence $\nu:=\tau[1]$ gives a Serre functor of
$\Db(\cohX)$.

\begin{definition}
  A complex $X$ in $\Db(\cohX)$ is \emph{$\tau^2$-stable} if
  $\tau^2X\cong X$.
\end{definition}

The following result is a particular case of
\cite[Thm. 3.1]{lenzing_tilting_1996}. It allows us to compute the
endomorphism algebra of a tilting sheaf in a given weighted projective
line in terms of a weighted projective line of smaller weights.

\begin{theorem}
  \label{thm:multiextension}
  \cite[Thm. 3.1]{lenzing_tilting_1996} Let $\XX$ be a weighted
  projective line of type $(p_1,\dots,p_t)$. Let $T$ be a
  $\tau^2$-stable tilting sheaf in $\cohX$, and suppose that the
  indecomposable direct summands of $T_0$ are exceptional simple
  sheaves at the points
  $\lambda_{i_i},\dots,\lambda_{i_k}\in\blambda$. We make the
  identification $\coh\YY\cong T_0^\perp$, see Proposition
  \ref{thm:techincal-general}\eqref{it:perpendicular-category}.
  Finally, let $E\in\coh\YY$ be the direct sum of all exceptional
  simple sheaves at the points
  $\lambda_{i_1},\dots,\lambda_{i_k}$. Then, there is an isomorphism
  of algebras
  \[
  \End_\XX(T)\cong\End_\YY(T_+\oplus E)\cong
  \begin{bmatrix}
    \End_\YY(T_+) & \Hom_\YY(T_+,E) \\
    0 & \End_\YY(E)
  \end{bmatrix}.
  \]
\end{theorem}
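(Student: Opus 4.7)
The plan is to establish the desired isomorphism block by block along the decomposition $T = T_+ \oplus T_0$. Writing $\End_\XX(T)$ as the matrix of $\Hom$-spaces between the two summands, the entry $\Hom_\XX(T_0, T_+)$ vanishes since $T_+ \in T_0^\perp$, and the full faithfulness of the embedding $\coh\YY \cong T_0^\perp \subset \cohX$ (Theorem~\ref{thm:techincal-general}\eqref{it:perpendicular-category}) identifies $\End_\XX(T_+)$ with $\End_\YY(T_+)$, using Theorem~\ref{thm:techincal-general}\eqref{it:line-bundle-as-summand} to know that $T_+$ lies in the perpendicular category. The upper triangular form of $\End_\YY(T_+ \oplus E)$ is equally immediate, because $E$ is torsion and $T_+$ is a vector bundle, so $\Hom_\YY(E, T_+) = 0$. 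Only two pieces of work remain: matching the diagonal block on $T_0$ and the off-diagonal block $\Hom_\XX(T_+, T_0)$.

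For the diagonal block $\End_\XX(T_0) \cong \End_\YY(E)$, I would exploit $\tau^2$-stability together with Lemma~\ref{lemma:T0-semisimple} to argue that at each $\lambda_{i_j}$ the weight $p_{i_j}$ is even and the simple summands of $T_0$ at $\lambda_{i_j}$ form exactly one of the two $\tau^2$-orbits of simples in $\T(\lambda_{i_j})$: when $p_{i_j}$ is odd the sole $\tau^2$-orbit contains adjacent simples and violates rigidity, and selecting both orbits fails for the same reason. Hence $q_{i_j} = p_{i_j}/2$, and by Theorem~\ref{thm:techincal-general}\eqref{it:perpendicular-category} the new weight at $\lambda_{i_j}$ in $\YY$ is also $p_{i_j}/2$. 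Therefore $T_0$ and $E$ consist of the same number of exceptional simples at each special point, and both endomorphism rings are the same semisimple algebra $K^{\sum_j p_{i_j}/2}$.

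The off-diagonal block requires more. Transport $E$ across the equivalence $\coh\YY \cong T_0^\perp$ to obtain $\widetilde E \in \cohX$. A direct analysis of the perpendicular category inside each tube shows that every indecomposable summand of $\widetilde E$ is a uniserial sheaf of length $2$ with top the corresponding summand of $T_0$ and socle in the complementary $\tau^2$-orbit, producing a short exact sequence in $\cohX$
\[
0 \longrightarrow \tau^{-1} T_0 \longrightarrow \widetilde E \longrightarrow T_0 \longrightarrow 0,
\]
where $\tau^2$-stability is used to write $\tau T_0 \cong \tau^{-1} T_0$. Applying $\Hom_\XX(T_+, -)$ and invoking the vanishings
\[
\Hom_\XX(T_+, \tau^{-1} T_0) \cong D\Ext^1_\XX(T_0, T_+) = 0, \qquad \Ext^1_\XX(T_+, \tau^{-1} T_0) \cong D\Hom_\XX(T_0, T_+) = 0,
\]
which follow from Serre duality combined with $T_+ \in T_0^\perp$ and $\tau^2 T_+ \cong T_+$, the long exact sequence collapses to an isomorphism $\Hom_\XX(T_+, \widetilde E) \cong \Hom_\XX(T_+, T_0)$. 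Full faithfulness of $T_0^\perp \subset \cohX$ then identifies the left-hand side with $\Hom_\YY(T_+, E)$.

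The main obstacle is the explicit determination of the indecomposable summands of $\widetilde E$ inside $\cohX$, i.e.\ the description of the simple objects in the perpendicular category of a rank-$p_{i_j}$ tube after removing one $\tau^2$-orbit of simples; this is what underpins the short exact sequence above. Once that description is secured, the remaining steps are the vanishing calculations just displayed together with the naturality check that the three block isomorphisms assemble into an algebra isomorphism compatibly with the $\End_\YY(T_+)$--$\End_\YY(E)$ bimodule actions on $\Hom_\YY(T_+, E)$.
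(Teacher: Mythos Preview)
Your argument is correct and complete in outline; the structural description of $\widetilde E$ that you flag as the main obstacle is indeed the length-$2$ uniserial picture you describe, and the vanishing calculations via Serre duality go through as written. However, your route differs from the paper's. The paper does not embed $E$ into $\cohX$ at all; instead it takes the right adjoint $r\colon\cohX\to\coh\YY$ of the inclusion, observes that $r$ carries each indecomposable summand of $T_0$ to an exceptional simple of $\coh\YY$ at the same point (so $r(T_0)\cong E$), and then reads off $\Hom_\XX(T_+,T_0)\cong\Hom_\YY(T_+,r(T_0))=\Hom_\YY(T_+,E)$ directly from the adjunction isomorphism, deferring the remaining compatibility checks to Lenzing--Meltzer. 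Your approach and the paper's are dual: you push $E$ forward along $i$ and then correct via the short exact sequence $0\to\tau T_0\to\widetilde E\to T_0\to 0$, whereas the paper pulls $T_0$ back along $r$ and gets the Hom identification for free from adjunction. The adjoint route is shorter and makes the bimodule compatibility automatic (naturality of the adjunction), while your route is more self-contained and makes visible exactly how the simples of $\YY$ sit inside the tubes of $\XX$; the two arguments rest on the same underlying fact, since computing $r(T_0)$ amounts to identifying the right $T_0^\perp$-approximation $\widetilde E\to T_0$, which is precisely your surjection.
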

\begin{proof}
  Let $r\colon \cohX\to \coh\YY$ be the right adjoint of the inclusion
  $\coh\YY\cong T_0^\perp$. It is easy to see that $r$ induces a
  bijection between the indecomposable direct summands of $T_0$ and
  the exceptional simple sheaves in $\coh\YY$ at the points
  $\lambda_{i_1},\dots,\lambda_{i_k}$. Then the result follows
  immediately from the proof of \cite[Thm. 3.1]{lenzing_tilting_1996}.
\end{proof}

\subsection{Graded quivers with potential and their mutations}
\label{sec:quiver-with-potential}

Quivers with potentials and their Jacobian algebras where introduced
in \cite{derksen_quivers_2008} as a tool to prove several of the
conjectures of \cite{fomin_cluster_2007} about cluster algebras in a
rather general setting, see \cite{derksen_quivers_2010}.  Their graded
version was introduced in \cite{amiot_cluster_2010} in order to
describe the effect of mutation of cluster tilting objects in
generalized cluster categories at the level of the corresponding
derived category.

Let $Q=(Q_0,Q_1)$ be a finite quiver without loops or 2-cycles and
$d:Q_1\to \ZZ$ a map called a \emph{degree function} on the set of
arrows of $Q$.  Then $d$ induces a $\ZZ$-grading on the complete path
algebra $\widehat{kQ}$ in an obvious way. We endow $\widehat{KQ}$ with
the $\mathcal{J}$-adic topology where $\mathcal{J}$ is the radical of
$\widehat{kQ}$.  A \emph{potential} in $Q$ is a formal linear
combination of cyclic paths in $Q$; we are only interested in
potentials which are homogeneous elements of $\widehat{kQ}$.  For a
cyclic path $a_1\cdots a_d$ in $Q$ and $a\in Q_1$, let
\[
\partial_a(a_1\cdots a_d) = \sum_{a_i=a} a_{i+1}\cdots a_da_1\cdots
a_{i-1}
\]
and extend it linearly and continuously to an arbitrary potential in
$Q$.  The maps $\partial_a$ are called \emph{cyclic derivatives}.

\begin{definition}
  A \emph{graded quiver with potential} is a triple $(Q,W,d)$ where
  $(Q,d)$ is a $\ZZ$-graded finite quiver without loops and 2-cycles
  and $W$ is a homogeneous potential for $Q$.  The \emph{graded
    Jacobian algebra} of $(Q,W,d)$ is the $\ZZ$-graded algebra
  \[
  \Jac(Q,W,d) \cong \dfrac{\widehat{kQ}}{\partial(W)}
  \]
  where $\partial(W)$ is the closure in $\widehat{kQ}$ of the ideal
  generated by the subset 
  \[
  \setP{\partial_a(W)}{a\in Q_1}.
  \]
\end{definition}

For each vertex of $Q$ there is a pair of well defined operations on
the right-equivalence classes of graded quivers with potential called
\emph{left and right mutations} (see
\cite[Def. 4.2]{derksen_quivers_2008} for the definition of
right-equivalence).  Note that right equivalent quivers with potential
have isomorphic Jacobian algebras.

Let $(Q,W,d)$ be graded quiver with potential with $W$ homogeneous of
degree $d(W)$ and $k\in Q_0$.  The \emph{non-reduced left mutation at
  $k$} of $(Q,W,d)$ is the graded quiver with potential
$\tilde{\mu}_k^L(Q,W,d)=(Q',W',d')$ defined as follows:
\begin{enumerate}
\item The quivers $Q$ and $Q'$ have the same set of vertices.
\item All arrows of $Q$ which are not adjacent to $k$ are also arrows
  of $Q'$ and of the same degree.
\item Each path $i\xto{a}k\xto{b}j$ in $Q$ creates an arrow $[ba]:i\to
  j$ of degree $d(a)+d(b)$ in $Q'$.
\item Each arrow $a:i\to k$ of $Q$ is replaced in $Q'$ by an arrow
  $a^*:k\to i$ of degree $-d(a)+d(W)$.
\item Each arrow $b:k\to j$ of $Q$ is replaced in $Q'$ by an arrow
  $b^*:j\to k$ of degree $-d(b)$.
\item The new potential is given by
  \[ W' = [W] + \sum_{i\xto{a}k\xto{b}j} [ba]a^* b^*\] where $[W]$ is
  the potential obtained from $W$ by replacing each path
  $i\xto{a}k\xto{b}j$ which appears in $W$ with the corresponding
  arrow $[ba]$ of $Q'$.
\end{enumerate}
By \cite[Thm. 6.6]{amiot_cluster_2010}, there exists a graded quiver
with potential $(Q'_{\mathrm{red}},W'_{\mathrm{red}}, d'_{\mathrm{red}})$ which is
right equivalent to $(Q',W',d')$ and such that $W'_{\mathrm{red}}$ only involves cycles of length greater or equal than 3 in $Q_{\mathrm{red}}'$.  The \emph{left mutation at $k$} of
$(Q',W',d')$ is then defined as
\[
\mu_k^L(Q,W,d):= (Q'_{\mathrm{red}},W'_{\mathrm{red}}, d').
\]
The \emph{right mutation at $k$} of $(Q,W,d)$ is defined almost
identically, just by replacing (d) and (e) above by
\begin{enumerate}
\item[(d)] Each arrow $a:i\to k$ of $Q$ is replaced in $Q'$ by an
  arrow $a^*:k\to i$ of degree $-d(a)$.
\item[(e)] Each arrow $b:k\to j$ of $Q$ is replaced in $Q'$ by an
  arrow $b^*:j\to k$ of degree $-d(b)+d(W)$.
\end{enumerate}
Finally, the following definition is very convenient for our purposes.

\begin{definition}
  \label{def:truncated-Jacobian-algebra}
  \cite[Sec. 3]{herschend_selfinjective_2011} Let $(Q,W,d)$ be a
  graded quiver with potential with $d(W)=1$.  Then the \emph{truncated
    Jacobian algebra} is the degree zero part of $\Jac(Q,W,d)$, which
  is given by the factor algebra
  \[
  \Jac(Q,W,d):=\Jac(Q,W)/\overline{\langle a\in Q\mid d(a)=1 \rangle}
  =\widehat{kQ}/\overline{\langle \partial_a(W)\mid d(a)=1\rangle}.
  \]
  Also, we say that $(Q,W,d)$ is \emph{algebraic} if $\Jac(Q,W,d)$ has
  global dimension at most 2 and the set
  \[
  \setP{\partial_a(W)}{d(a)=1}
  \]
  is a minimal set of generators of the ideal
  $\overline{\langle \partial_a(W)\in Q\mid d(a)=1 \rangle}$ of
  $\widehat{kQ}$.
\end{definition}
Note that left and right mutation differ from each other at the level
of the grading only.

\subsection{2-representation-finite algebras and 2-APR-tilting}
\label{sec:2-RF}

Let $\Lambda$ be a finite dimensional algebra of global dimension 2.
Following \cite[Def. 2.2]{iyama_n-representation-finite_2011}, we say
that $\Lambda$ is \emph{2-representation-finite} if there exist a
finite dimensional $\Lambda$-module $M$ such that
\[
\add M = \setP{X\in\mod\Lambda}{\Ext_\Lambda^1(M,X)=0} =
\setP{X\in\mod\Lambda}{\Ext_\Lambda^1(X,M)=0}.
\]
Such a $\Lambda$-module $M$ is called a \emph{2-cluster-tilting module}.
The functors
\[
\tau_2:=D\Ext_\Lambda^2(-,\Lambda):\mod\Lambda\to\mod\Lambda
\]
and
\[
\nu_2:=\nu[-2]:\Db(\mod\Lambda)\to \Db(\mod\Lambda),
\]
where $\nu:-\otimes_{\Lambda}^{\mathbf{L}}D\Lambda:\Db(\mod\Lambda)\to
\Db(\mod\Lambda)$ is the Nakayama functor of $\Db(\mod\Lambda)$ play
an important role in the theory of 2-representation-finite algebras.
Moreover, they are related by a functorial isomorphism $\tau_2\cong
H^0(\nu_2 -)$.  Note that $\tau_2$ induces a bijection between
indecomposable non-projective objects in $\add M$ and indecomposable
non-injective objects in $\add M$.

\begin{definition}
  \label{def:2RF}
  \cite[Def. 1.2]{herschend_n-representation-finite_2011} Let
  $\Lambda$ be a 2-representation-finite algebra.  We say that
  $\Lambda$ is \emph{2-homogeneous} if each $\tau_2$-orbit of
  indecomposable objects in $\add M$ consists of precisely two
  objects.  This is equivalent to $\nu_2^{-1}(\Lambda)$ being an
  injective $\Lambda$-module.
\end{definition}

The class of 2-representation-finite algebras can be characterized in
terms of the so-called 3-preprojective algebras.

\begin{definition}
  \cite{keller_deformed_2011} Let $\Lambda$ be a finite dimensional
  algebra of global dimension at most 2.  The \emph{complete
    3-preprojective algebra of $\Lambda$} is the tensor algebra
  \[
  \Pi_3(\Lambda) := \prod_{d\geq0}
  \Ext^2_\Lambda(D\Lambda,\Lambda)^{\otimes d}.
  \]
\end{definition}

We have the following characterization of 2-representation-finite
algebras.

\begin{proposition}
  \label{prop:charactetrization-2RF-selfinjective-QP}
  \cite[Prop. 3.9]{herschend_selfinjective_2011} Let $\Lambda$ be a
  finite dimensional algebra of global dimension at most 2.  Then
  $\Lambda$ is 2-representation-finite if and only if $\Pi_3(\Lambda)$
  is a finite dimensional selfinjective algebra.
\end{proposition}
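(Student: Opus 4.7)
The plan is to realize $\Pi_3(\Lambda)$ as the endomorphism algebra of a canonical candidate for a 2-cluster-tilting module and then translate finite-dimensionality and selfinjectivity of $\Pi_3(\Lambda)$ into the defining properties of a 2-cluster-tilting module. Concretely, I would first establish the identification
\[
\Pi_3(\Lambda) \;\cong\; \bigoplus_{d\geq 0} \Hom_{\Db(\mod\Lambda)}\bigl(\Lambda, \nu_2^{-d}\Lambda\bigr),
\]
where the algebra structure on the right is given by composition. Since $\Lambda$ has global dimension at most $2$, the functor $\nu_2 = \nu[-2]$ preserves $\mod\Lambda\subset\Db(\mod\Lambda)$ on projective summands in the appropriate sense, and $\Ext^2_\Lambda(D\Lambda,\Lambda) \cong \Hom_{\Db(\mod\Lambda)}(\Lambda,\nu_2^{-1}\Lambda)$; iterating gives the formula above. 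This reduces the statement to a claim about the module $M := \bigoplus_{d\geq 0}\nu_2^{-d}\Lambda$.

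For the direction $(\Rightarrow)$, assume $\Lambda$ is 2-representation-finite with 2-cluster-tilting module $N$. Using that $\tau_2\cong H^0\nu_2$ and that $\tau_2$ induces a bijection between non-projective and non-injective indecomposables in $\add N$, I would argue that every indecomposable summand of $N$ is of the form $\nu_2^{-d}P$ for some indecomposable projective $P$ and some $d\geq 0$, and that this process terminates because $\add N$ has only finitely many indecomposables. Consequently $M$ has finitely many indecomposable summands and $M\cong N$ (up to multiplicities), so $\Pi_3(\Lambda)\cong\End_\Lambda(M)^{\op}$ is finite-dimensional. Selfinjectivity then follows: the Nakayama functor $\nu$ permutes the indecomposable summands of $M$, since $\nu_2^{-1}$ does and $\nu = \nu_2[2]$, which gives a Nakayama permutation on $\End_\Lambda(M)$.

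For the converse $(\Leftarrow)$, assume $\Pi_3(\Lambda)$ is finite-dimensional and selfinjective. Finite-dimensionality forces the orbit $\{\nu_2^{-d}\Lambda\}_{d\geq 0}$ to stabilize, so $M$ is again a finite-dimensional $\Lambda$-module. The plan is to verify that $\add M$ satisfies the two Ext-orthogonality conditions defining a 2-cluster-tilting subcategory. The key tool is Iyama's criterion via vanishing of $\Ext^1_\Lambda$ together with the observation that, for $X,Y\in\add M$,
\[
\Ext^1_\Lambda(X,Y) \;\cong\; D\Hom_\Lambda(Y,\tau_2 X)/[\text{injectives}]
\]
(an Auslander--Reiten-type formula for the 2-Auslander--Reiten translate $\tau_2$), which vanishes precisely because the selfinjective structure on $\End_\Lambda(M) \cong \Pi_3(\Lambda)$ forces $\tau_2$ to act as a permutation on $\add M$ compatible with the Nakayama permutation.

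The main obstacle is the last step: translating the abstract algebraic condition \emph{$\Pi_3(\Lambda)$ is selfinjective} into the geometric statement that $\add M$ is closed under $\tau_2$ and satisfies the two-sided Ext-vanishing. This requires careful bookkeeping of the identification between multiplication in $\Pi_3(\Lambda)$ and composition of morphisms $\Lambda\to\nu_2^{-d}\Lambda$ in the derived category, together with the interplay between $\nu_2$ and the Nakayama functor. Once this is set up, selfinjectivity yields a Nakayama permutation on simples of $\Pi_3(\Lambda)$, which translates into the crucial property that $\nu_2^{-1}\Lambda$ has no new injective summands beyond those tracked by the orbit, and this in turn gives the 2-cluster-tilting property of $M$ via Iyama's characterization.
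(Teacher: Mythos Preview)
The paper does not give a proof of this proposition; it is quoted verbatim from \cite[Prop.~3.9]{herschend_selfinjective_2011} and used as a black box throughout. There is therefore no argument in the paper to compare your proposal against.

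Judged on its own, your outline follows the standard route taken in the cited source and in Iyama's earlier work: identify $\Pi_3(\Lambda)$ with $\End_\Lambda(M)$ for $M=\bigoplus_{d\geq0}\tau_2^{-d}\Lambda$ and translate the two conditions through this identification. The forward direction is essentially correct as you describe it. For the converse, your honest admission that the last step is the ``main obstacle'' is accurate, and your sketch does not yet clear it. Two specific points deserve attention. First, finite-dimensionality of the tensor algebra $\Pi_3(\Lambda)$ tells you that the tensor powers of $\Ext^2_\Lambda(D\Lambda,\Lambda)$ eventually vanish, but it does not by itself tell you that the complexes $\nu_2^{-d}\Lambda\in\Db(\mod\Lambda)$ are concentrated in degree~$0$, which is what you need in order to form $M$ as an honest module and to invoke the module-level formula $\tau_2\cong H^0(\nu_2-)$. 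Second, the decisive consequence of selfinjectivity is that $D\Lambda\in\add M$, i.e.\ that every $\tau_2^{-}$-orbit of an indecomposable projective terminates at an injective; it is this, via Iyama's characterisation of $n$-cluster-tilting modules of the form $\bigoplus_{d}\tau_n^{-d}\Lambda$, that yields the maximality half of the 2-cluster-tilting condition. Your remarks about the Nakayama permutation and the Auslander--Reiten-type formula gesture toward this but do not isolate the step $D\Lambda\in\add M$, which is where selfinjectivity is actually spent.
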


Following \cite{keller_deformed_2011}, $\Pi_3(\Lambda)$ can be
presented as a graded Jacobian algebra for some quiver with potential
obtained from $\Lambda$.  For this, let $Q$ be the Gabriel quiver of
$\Lambda$ and let
\[
\Lambda \cong \widehat{kQ}/\overline{\langle r_1,\dots r_s\rangle}
\]
where $\set{r_1,\dots, r_s}$ is a minimal set of relations for
$\Lambda$.  Consider the extended quiver
\[
\tilde{Q} = Q\amalg \setP{r_i^*:t(r_i)\to s(r_i)}{
  r_i:s(r_i)\dashrightarrow t(r_i)}_{1\leq i\leq s},
\]
\ie $\tilde{Q}$ is obtained from $Q$ by adding an arrow in the
opposite direction for each relation in $\Lambda$.  We consider
$\tilde{Q}$ as a graded quiver where the arrows in $Q_1$ have degree
zero and the arrows $r_i^*$ have degree one.  Then we can define a
homogeneous potential $W$ in $\tilde{Q}$ of degree one by
\[
W:= \sum_{i=1}^s r_ir_i^*.
\]

\begin{theorem}
  \label{thm:tildeLambda}
  \cite[Thm. 6.10]{keller_deformed_2011} Let $\Lambda$ be a finite
  dimensional algebra of global dimension at most 2.  Then there is an
  isomorphism of graded algebras between $\Jac(\tilde{Q},W,d)$ and
  $\Pi_3(\Lambda)$.
\end{theorem}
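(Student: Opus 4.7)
The plan is to compare $\Pi_3(\Lambda)$ and $\Jac(\tilde Q, W, d)$ piece by piece in each degree. With the convention $d(a)=0$ for $a\in Q_1$ and $d(r_j^*)=1$, the potential $W=\sum_i r_i r_i^*$ is homogeneous of degree one, and its cyclic derivatives split into two groups: in degree zero one has $\partial_{r_j^*}W=r_j$, and in degree one one has $\partial_a W=\sum_j\partial_a(r_j)\,r_j^*$ for $a\in Q_1$. The degree-zero relations alone cut out the degree-zero part of $\Jac(\tilde Q,W,d)$, which is visibly $\widehat{kQ}/\overline{\langle r_1,\dots,r_s\rangle}=\Lambda$, matching the degree-zero part of $\Pi_3(\Lambda)$.

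Next I would analyse the degree-one part of $\Jac(\tilde Q,W,d)$, which as a $\Lambda$-bimodule is generated by the $r_j^*$ subject to the relations $\sum_j\partial_a(r_j)\,r_j^*=0$ indexed by $a\in Q_1$. The main technical step is to identify this bimodule with $\Ext^2_\Lambda(D\Lambda,\Lambda)$. For this I would use the minimal projective bimodule resolution
\[
0\to \bigoplus_{j} \Lambda e_{t(r_j)}\otimes e_{s(r_j)}\Lambda \xto{\delta_2} \bigoplus_{a\in Q_1}\Lambda e_{t(a)}\otimes e_{s(a)}\Lambda \to \bigoplus_{i\in Q_0}\Lambda e_i\otimes e_i\Lambda \to \Lambda\to 0,
\]
which exists because $\Lambda$ has global dimension at most two and $\{r_j\}$ is a minimal set of generators of the ideal of relations; the differential $\delta_2$ is built precisely from the cyclic derivatives $\partial_a(r_j)$. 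Combining this with the adjunction $\Ext^2_\Lambda(D\Lambda,\Lambda)\cong\Ext^2_{\Lambda\otimes\Lambda^\op}(\Lambda,\Lambda\otimes\Lambda)$, one reads off the presentation of $\Ext^2_\Lambda(D\Lambda,\Lambda)$ as a $\Lambda$-bimodule with generators $r_j^*$ of source $t(r_j)$ and target $s(r_j)$ and relations $\sum_j\partial_a(r_j)\,r_j^*=0$, exactly matching the degree-one part of $\Jac(\tilde Q,W,d)$.

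Finally, to upgrade the degree-zero and degree-one identifications to an isomorphism of graded algebras, I would observe that $\Pi_3(\Lambda)$ is by definition the tensor algebra $T_\Lambda \Ext^2_\Lambda(D\Lambda,\Lambda)$, so it is free over $\Lambda$ in degrees $\geq 2$. On the Jacobian side, since all relations $\partial_a W$ and $\partial_{r_j^*}W$ live in degrees zero and one, the degree-$n$ part for $n\geq 2$ is also generated freely over $\Lambda$ by iterated products of the degree-one part, with no further relations. The canonical map sending each $r_j^*$ to the corresponding generator of $\Ext^2_\Lambda(D\Lambda,\Lambda)$ is therefore an isomorphism in every degree.

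The main obstacle is the degree-one identification: one has to verify carefully that the minimal bimodule resolution of $\Lambda$ takes the stated form and that the differential $\delta_2$ is expressible through the cyclic derivatives $\partial_a(r_j)$. This is essentially the content of the deformed Calabi-Yau completion framework of \cite{keller_deformed_2011}, based on Koszul-dual DG algebra techniques; once it is in place, the higher-degree comparison is routine because both algebras are tensor algebras over $\Lambda$ on the same generating bimodule.
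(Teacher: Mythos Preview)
The paper does not give a proof of this theorem at all: it is quoted as \cite[Thm.~6.10]{keller_deformed_2011} and used as a black box. There is therefore nothing in the paper to compare your argument against.

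That said, your outline is a faithful sketch of how the result is actually established. The key identification of the degree-one piece of $\Jac(\tilde Q,W,d)$ with $\Ext^2_\Lambda(D\Lambda,\Lambda)$ via the minimal projective bimodule resolution of $\Lambda$ is precisely the mechanism underlying Keller's argument (carried out there in the language of Calabi--Yau completions of DG categories), and you correctly flag this as the only nontrivial step. One small point worth tightening: both $\Pi_3(\Lambda)$ and $\Jac(\tilde Q,W,d)$ are defined here as \emph{completed} objects (a product $\prod_{d\geq0}$ on the preprojective side, the $\mathcal{J}$-adically completed path algebra on the Jacobian side), so your final ``free in degrees $\geq 2$'' comparison should be phrased for completed tensor algebras rather than ordinary ones.
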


A useful tool to construct 2-representation-finite algebras which are
derived equivalent to a given one is 2-APR-tilting, which is a higher
analog of usual APR-tilting.  The notion of 2-APR-co-tilting is
defined dually.

\begin{definition}
  \label{def:2-apr-tilting}
  \cite[Def. 3.14]{iyama_n-representation-finite_2011} Let $\Lambda$
  be a finite dimensional algebra of global dimension at most 2 and
  $\Lambda = P\oplus Q$ any direct summand decomposition of $\Lambda$
  such that
  \begin{enumerate}
  \item $\Hom_\Lambda(Q,P)=0$.
  \item $\Ext^i_\Lambda(\nu Q,P)=0$ for any $0<i\neq 2$.
  \end{enumerate}
  We call the complex
  \[
  T:=(\nu_2^{-1} P)\oplus Q\in\Db(\mod\Lambda)
  \]
  the \emph{2-APR-tilting complex associated with $P$}.
\end{definition}

In analogy with APR-tilting for hereditary algebras, 2-APR-tilting
preserves 2-representation-finiteness.

\begin{theorem}
  \label{thm:2-APR-2-RF}
  \cite[Thm. 4.7]{iyama_n-representation-finite_2011} Let $\Lambda$ be
  a 2-representation-finite algebra and $T$ a 2-APR-tilting complex in
  $\Db(\mod\Lambda)$.  Then the algebra $\End_{\Db(\Lambda)}(T)$ is
  also 2-representation-finite.
\end{theorem}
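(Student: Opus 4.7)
The plan is to exploit the characterization in Proposition \ref{prop:charactetrization-2RF-selfinjective-QP}: I would show that $T$ is a tilting complex whose endomorphism algebra $\Lambda':=\End_{\Db(\Lambda)}(T)$ has global dimension at most $2$, and then exhibit an isomorphism $\Pi_3(\Lambda')\cong\Pi_3(\Lambda)$ of complete $3$-preprojective algebras; since the latter is finite-dimensional selfinjective by hypothesis, the conclusion will follow. Equivalently, one can transport a $2$-cluster-tilting $\Lambda$-module across the derived equivalence induced by $T$ and prove that the image is again concentrated in degree zero.

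First, I would verify that $T=\nu_2^{-1}P\oplus Q$ is a tilting complex. The summand $Q$, being a direct summand of $\Lambda$, is rigid, and $\nu_2^{-1}P$ is rigid because $P$ is and $\nu_2^{-1}$ is an autoequivalence of the perfect derived category. The cross vanishings reduce, via Serre duality in $\Db(\mod\Lambda)$ (i.e.\ $D\Hom(X,Y)\cong\Hom(Y,\nu X)$) and the shift $\nu_2=\nu[-2]$, to the hypotheses $\Hom_\Lambda(Q,P)=0$ and $\Ext_\Lambda^i(\nu Q,P)=0$ for $0<i\neq 2$ in Definition~\ref{def:2-apr-tilting}. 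That the number of indecomposable summands of $T$ equals the rank of $K_0(\Lambda)$ is immediate. For global dimension, I would write down projective resolutions of the simple $\Lambda'$-modules in terms of the pieces of $T$, using that $\Lambda$ has global dimension at most~$2$ and that the two blocks of $T$ see each other only in degrees $\{0,1,2\}$; this concentration in three consecutive cohomological degrees, together with the rigidity established above, forces $\mathrm{gl.dim}\,\Lambda'\leq 2$.

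For the final and most delicate step, I would identify $\Pi_3(\Lambda')$ with $\Pi_3(\Lambda)$. Using Theorem~\ref{thm:tildeLambda} and the fact that $\Pi_3$ can be expressed intrinsically as $\bigoplus_{i\geq 0}\Hom_{\Db(\Lambda)}(\Lambda,\nu_2^{-i}\Lambda)$, the derived equivalence $\Phi:\Db(\mod\Lambda)\xrightarrow{\sim}\Db(\mod\Lambda')$ induced by $T$ sends $\Lambda=\nu_2^{-1}P\oplus\nu_2^{-1}(\nu_2 Q)$ to $T'=\Phi(T)\oplus\Phi(\nu_2Q)$, and compatibility of $\Phi$ with the Nakayama functor (which only uses that both algebras have finite global dimension) gives the desired isomorphism. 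The \emph{main obstacle} I would expect is precisely this compatibility with $\nu_2$: proving that the image of $\Lambda$ under $\Phi^{-1}$, read back in $\Db(\mod\Lambda')$, is again concentrated in degree zero and assembles into a basic progenerator in the form predicted by $2$-APR-tilting. This is where one has to use in an essential way both conditions (a) and (b) of Definition~\ref{def:2-apr-tilting} together, rather than just one of them; once established, the isomorphism of $\Pi_3$'s and hence $2$-representation-finiteness of $\Lambda'$ are formal consequences.
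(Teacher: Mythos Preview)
The paper does not prove this statement: Theorem~\ref{thm:2-APR-2-RF} is quoted verbatim from \cite[Thm.~4.7]{iyama_n-representation-finite_2011} as background material, with no proof given here. There is therefore nothing in this paper to compare your proposal against.

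For what it is worth, your outline is a reasonable route, though different from the original Iyama--Oppermann argument. There the proof proceeds by directly transporting the $2$-cluster-tilting module across the derived equivalence induced by $T$ and checking the orthogonality conditions by hand (essentially the alternative you mention in passing). Your approach via invariance of $\Pi_3$ under $2$-APR tilting is closer in spirit to the later Herschend--Iyama framework and to Theorem~\ref{thm:2-APR-QPs} of the present paper, which says precisely that $\Pi_3$ changes only by a regrading under $2$-APR tilting; once that is in hand, the isomorphism $\Pi_3(\Lambda')\cong\Pi_3(\Lambda)$ of ungraded algebras is immediate and the ``main obstacle'' you flag largely dissolves. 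The step that actually requires care in your sketch is the bound $\mathrm{gl.dim}\,\Lambda'\le 2$: the phrase ``concentration in three consecutive cohomological degrees forces global dimension at most $2$'' is not a proof, and this is where the hypotheses (a) and (b) of Definition~\ref{def:2-apr-tilting} must be used in earnest.
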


We can describe the effect of 2-APR-tilting using Theorem
\ref{thm:tildeLambda} as follows:

\begin{theorem}
  \label{thm:2-APR-QPs}
  \cite[Sec. 3.3]{iyama_n-representation-finite_2011} Let $\Lambda$ be
  a 2-representation-finite algebra and $P$ an indecomposable
  projective $\Lambda$-module which corresponds to a sink $k$ in the
  Gabriel quiver of $\Lambda$ and let $T$ be the associated
  2-APR-tilting $\Lambda$-module.  Also, let $(\Q,W,d)$ be the graded
  quiver with potential associated to $\Pi_3(\Lambda)$, see Theorem
  \ref{thm:tildeLambda}.  Then there is an isomorphism of graded
  algebras
  \[
  \End_\Lambda(T)\cong\Jac(\Q,W,d')
  \]
  where $d'$ coincides with $d$ on arrows not incident to $k$, for an
  arrow $a\in\Q$ incident to $k$ we have $d'(a)=1$ if $d(a)=0$, and we
  have $d'(a)=0$ if $d(a)=1$.
\end{theorem}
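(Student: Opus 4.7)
The plan is to combine a derived-invariance statement for the 3-preprojective algebra with an explicit analysis of how the Gabriel quiver and minimal relations change near the sink $k$ under the 2-APR-tilt. Since $T$ is a tilting complex and both $\Lambda$ and $\End_\Lambda(T)$ have global dimension at most two, Keller's theory of (deformed) Calabi-Yau completions yields a canonical isomorphism of ungraded algebras $\Pi_3(\Lambda)\cong\Pi_3(\End_\Lambda(T))$. Applying Theorem~\ref{thm:tildeLambda} to both algebras, the task reduces to showing that the canonical graded quiver with potential attached to $\End_\Lambda(T)$ is right-equivalent to $(\Q,W,d')$.

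To compare gradings I would first determine the Gabriel quiver $Q'$ of $\End_\Lambda(T)$ together with a minimal set of relations near $k$. Writing $T=\nu_2^{-1}P\oplus Q$ and exploiting that $P$ sits at a sink of $Q_\Lambda$, morphisms between the summands of $T$ can be read off from the minimal projective presentation of $\nu_2^{-1}P$. A direct computation then yields three facts: each arrow $\alpha\colon j\to k$ of $Q_\Lambda$ produces an arrow $k\to j$ in $Q'$; each degree-one arrow $r^*\colon k\to i$ of $\Q$ (that is, each minimal relation of $\Lambda$ ending at $k$) becomes a genuine arrow $i\to k$ in $Q'$; and a minimal set of relations for $\End_\Lambda(T)$ at $k$ is obtained by reading the cyclic derivatives $\partial_\alpha W$ for $\alpha\colon j\to k$ in the new quiver. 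Away from $k$ the extended quiver and the potential are unchanged. This means that the extended quiver of $\End_\Lambda(T)$ is again $\Q$ and that its canonical potential is right-equivalent to $W$, while the roles of ``original arrow'' (degree $0$) and ``relation arrow'' (degree $1$) are swapped exactly at the arrows incident to $k$. Encoding this swap in the degree function gives precisely $d'$.

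The main technical obstacle is establishing the three facts above, that is, identifying the morphism spaces and minimal relations of $\End_\Lambda(T)$ at $k$ with the claimed pieces of $(\Q,W)$. The key inputs are that $k$ is a sink in $Q_\Lambda$, so the minimal projective presentation of $\nu_2^{-1}P$ has the shape $\bigoplus_{\alpha\colon j\to k}P_j\to P$, and that the cyclic derivatives of $W$ at arrows $\alpha\colon j\to k$ recover exactly the minimal relations of $\Lambda$ ending at $k$, by the very construction of $W$ in the paragraph before Theorem~\ref{thm:tildeLambda}. Taking the degree-zero part of $\Jac(\Q,W,d')$ and matching it with the presentation of $\End_\Lambda(T)$ so obtained yields the claimed isomorphism of graded algebras.
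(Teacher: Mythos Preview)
The paper does not supply its own proof of this theorem: it is stated with a citation to \cite[Sec.~3.3]{iyama_n-representation-finite_2011} and used as a black box, so there is nothing in the present paper to compare your proposal against.

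On its own merits, your overall strategy is the standard one and is essentially what underlies the cited result: the ungraded identification $\Pi_3(\Lambda)\cong\Pi_3(\End_\Lambda(T))$ comes from derived invariance of the $3$-Calabi--Yau completion, and the remaining task is to track the tensor grading, which amounts to checking that at the sink $k$ the roles of ``arrow of $Q_\Lambda$'' and ``relation arrow'' are interchanged while everything away from $k$ is untouched. Two points in your write-up deserve more care. First, the sentence ``the minimal projective presentation of $\nu_2^{-1}P$ has the shape $\bigoplus_{\alpha\colon j\to k}P_j\to P$'' is not correct as stated: $\nu_2^{-1}P$ is a priori a complex, and even when it is a module the map you describe is the minimal projective \emph{cover} of the radical, not a presentation of $\nu_2^{-1}P$ itself; what you actually need is the minimal projective resolution of the simple $S_k$ (of length $\le 2$ since $k$ is a sink and $\mathrm{gl.dim}\,\Lambda\le 2$) and the identification $\nu_2^{-1}P_k\cong\tau_2^{-}S_k$. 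Second, the claim that the new minimal relations at $k$ are exactly the $\partial_\alpha W$ for the old arrows $\alpha\colon j\to k$ is the heart of the matter and is where the genuine work lies; you assert it but do not indicate how it is verified. Once these two points are pinned down, your argument goes through.
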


\subsection{The cluster category of $\cohX$}
\label{sec:cluster-category}

Cluster categories associated with hereditary algebras were introduced
in \cite{buan_tilting_2006} in order to categorify the combinatorics
of acyclic cluster algebras.  The cluster category of a weighted
projective line was studied in \cite{barot_cluster_2010},
\cite{barot_tubular_2012} and \cite{barot_tubular_2013}.  For the
point of view of this article, they arise as the categorical
environment of 3-preprojective algebras of endomorphism algebras of
tilting sheaves in $\cohX$.

The cluster category associated with $\cohX$ is by definition the
orbit category
\[
\C = \C_\XX := \Db(\cohX)/(\tau[-1]).
\]
Thus, the objects of $\C$ are bounded complexes of coherent sheaves
over $\XX$ and the morphism spaces are given by
\[
\Hom_\C(X,Y) := \bigoplus_{i\in\ZZ}\Hom_{\Db(\XX)}(X,\tau^iY[-i])
\]
with the obvious composition rule.  Note that $\Hom_\C(X,Y)$ has a
natural $\ZZ$-grading.  It is known \cite{barot_cluster_2010} that
$\C$ is a $\Hom$-finite, Krull-Schmidt, $K$-linear triangulated
category with the 2-Calabi-Yau property: There is a natural
isomorphism
\[
D\Hom_\C(X,Y)\cong\Hom_\C(Y,X[2])
\]
for every $X,Y$ in $\C$.  It follows from
\cite[Prop. 2.1]{barot_cluster_2010} that $\cohX$ is a complete system
of representatives of isomorphism classes in $\C$ and that we have a
natural isomorphism
\[
\Hom_\C(X,Y) \cong\Hom_\XX(X,Y) \oplus\Ext_\XX^1(X,\tau^{-1}Y)
\]
for $X,Y\in\cohX$.  Recall that an object $T$ in $\C$ is said to be
\emph{rigid} provided that $\Hom_\C(T,T[1])=0$; more strongly, if we
have that
\[
\add T = \setP{X\in\C}{\Hom_\C(X,T[1])=0},
\]
then $T$ is called a \emph{cluster-tilting object}.  Identifying
isomorphism classes in $\cohX$ with those in $\C_\XX$, it follows that
tilting (resp. rigid) sheaves in $\cohX$ are precisely cluster-tilting
(resp. rigid) objects in $\C$.  Moreover, we have the following
description of the endomorphism algebras of cluster-tilting objects in
$\C$.

\begin{proposition}
  \label{prop:EndC-Pi3}
  \cite[Prop. 4.7]{amiot_cluster_2009} Let $T$ be a tilting sheaf in
  $\cohX$.  Then there is an isomorphism of graded algebras between
  $\End_\C(T)$ and $\Pi_3(\End_\XX(T))$.
\end{proposition}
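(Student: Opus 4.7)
The plan is to transport the problem along the tilting equivalence induced by $T$ and then invoke Amiot's theorem on generalized cluster categories.

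Set $\Lambda := \End_\XX(T)$. Since $\cohX$ is hereditary and $T$ is a tilting complex in $\Db(\cohX)$, it is classical that $\Lambda$ has finite global dimension (in fact at most $2$), and the functor $F := \mathbf{R}\Hom_\XX(T,-) \colon \Db(\cohX) \xrightarrow{\sim} \Db(\mod\Lambda)$ is a triangle equivalence satisfying $F(T) \cong \Lambda$. Consequently $\Db(\mod\Lambda)$ admits a Serre functor $\nu_\Lambda = -\otimes_\Lambda^{\mathbf{L}} D\Lambda$. Since Serre functors in $\Hom$-finite $K$-linear triangulated categories are unique up to natural isomorphism, $F$ intertwines $\nu = \tau[1]$ with $\nu_\Lambda$; shifting by $[-2]$ yields a natural isomorphism $F \circ (\tau[-1]) \cong \nu_2 \circ F$, where $\nu_2 := \nu_\Lambda[-2]$.

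It follows that $F$ descends to an equivalence of orbit categories
\[
\C_\XX = \Db(\cohX)/(\tau[-1]) \xrightarrow{\sim} \Db(\mod\Lambda)/\nu_2,
\]
preserving the natural $\ZZ$-grading on morphism spaces indexed by the orbit. The right hand side is Amiot's generalized cluster category of $\Lambda$; for this identification one needs the fact that the orbit category above is already triangulated, which follows from Keller's theorem applied to $\Lambda$, using that $\Lambda$ is derived equivalent to the hereditary category $\cohX$. Under the resulting equivalence, the cluster-tilting object $T$ corresponds to $\Lambda$, so we obtain an isomorphism of graded algebras $\End_{\C_\XX}(T) \cong \End_{\Db(\mod\Lambda)/\nu_2}(\Lambda)$.

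The result now follows from Amiot's theorem \cite[Prop. 4.7]{amiot_cluster_2009}, which asserts that for any basic algebra of global dimension at most $2$ there is an isomorphism of graded algebras $\End_{\Db(\mod\Lambda)/\nu_2}(\Lambda) \cong \Pi_3(\Lambda)$, where on the right the grading is by tensor power. The main obstacle is precisely this last step, which is not something one proves by hand at this level but is a structural fact proved by Amiot: one identifies the $d$-th graded piece $\Hom_{\Db(\Lambda)}(\Lambda, \nu_2^{-d}\Lambda)$ with $\Ext_\Lambda^2(D\Lambda,\Lambda)^{\otimes d}$, with vanishing for $d < 0$ by the global dimension hypothesis, and checks that the Yoneda-type composition on the orbit category matches the tensor multiplication in $\Pi_3(\Lambda)$.
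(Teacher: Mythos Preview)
The paper does not prove this proposition; it simply records it as a citation to Amiot. Your proposal is a correct and standard unpacking of why the citation applies in this setting: transport along the tilting equivalence $\mathbf{R}\Hom_\XX(T,-)$ intertwines $\tau[-1]$ with $\nu_2$ (by uniqueness of Serre functors), so $\C_\XX$ is identified with Amiot's generalized cluster category $\Db(\mod\Lambda)/\nu_2$ with $T$ going to $\Lambda$, and then Amiot's result gives $\End(\Lambda)\cong\Pi_3(\Lambda)$ as graded algebras. Your care in invoking Keller's theorem to ensure the orbit category is already triangulated---so that it agrees with the triangulated hull in Amiot's construction---is the one genuinely non-formal point, and you handle it correctly.
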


The category $\C$ has a cluster structure in the sense of
\cite{buan_tilting_2006}.  Moreover, mutation of cluster-tilting
objects is compatible with mutations of tilting sheaves and mutation
of Jacobian algebras, see \cite[Secs. 1, 2.5]{geiss_tubular_2013} for
example.

Finally, we have the following characterization of cluster-tilting
objects with selfinjective endomorphism algebra.

\begin{proposition}
  \label{prop:selfinj-QP-Sigma2}
  \cite[Prop. 4.4]{herschend_selfinjective_2011} Let $T$ be a
  cluster-tilting object in $\C$. Then $T\cong T[2]$ if and only if
  $\End_\C(T)$ is a selfinjective algebra.
\end{proposition}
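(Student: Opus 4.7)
The plan is to analyze the image of $T$ and $T[2]$ under the functor $F = \Hom_\C(T,-)\colon\C \to \mod A$, where $A = \End_\C(T)$, and exploit the 2-Calabi--Yau structure of $\C$. For the easy direction, suppose $T \cong T[2]$ in $\C$. The 2-Calabi--Yau property yields a natural isomorphism $DA = D\End_\C(T) \cong \Hom_\C(T, T[2])$ of right $A$-modules, and any chosen isomorphism $\psi\colon T \xrightarrow{\sim} T[2]$ induces a right $A$-linear isomorphism $A = \Hom_\C(T,T) \cong \Hom_\C(T,T[2]) \cong DA$ via post-composition with $\psi$. Hence $A$ is self-injective.

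For the converse, suppose $A$ is self-injective, so that $DA \cong A$ as right $A$-modules and consequently $F(T[2]) \cong DA \cong A = F(T)$. I would use the following standard facts about $F$ in the cluster-tilting setting (due to Keller--Reiten and Koenig--Zhu): $F$ is full, its restriction to $\add T$ is an equivalence with $\mathrm{proj}\,A$, and the kernel of $F$ consists precisely of morphisms factoring through $\add T[1]$ (in particular, $F(X) = 0$ iff $X \in \add T[1]$). Since $[2]$ is an autoequivalence of $\C$, the object $T[2]$ is again cluster-tilting, hence has the same number $n$ of pairwise non-isomorphic indecomposable summands as $T$. Write $T[2] = Y_1 \oplus \cdots \oplus Y_n$ and $T = T_1 \oplus \cdots \oplus T_n$ as indecomposable decompositions. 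For each $Y_j \notin \add T[1]$, any endomorphism factoring through $\add T[1]$ is non-invertible and hence lies in $\mathrm{rad}\,\End_\C(Y_j)$; combined with fullness of $F$, this makes $\End_A(F(Y_j))$ a quotient of a local ring by an ideal inside its radical, so $\End_A(F(Y_j))$ is local and $F(Y_j)$ is indecomposable projective. Counting: $A \cong F(T[2]) = \bigoplus_j F(Y_j)$ has exactly $n$ indecomposable summands, so every $F(Y_j)$ must be nonzero and therefore no $Y_j$ can lie in $\add T[1]$.

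It follows that there is a permutation $\sigma$ of $\{1,\ldots,n\}$ with $F(Y_j) \cong F(T_{\sigma(j)})$ in $\mod A$. By fullness of $F$, a chosen isomorphism and its inverse lift to morphisms $\alpha_j\colon Y_j \to T_{\sigma(j)}$ and $\beta_j\colon T_{\sigma(j)} \to Y_j$ in $\C$; then $\beta_j\alpha_j = \id_{Y_j} + h$ with $h$ factoring through $\add T[1]$, and as above $h \in \mathrm{rad}\,\End_\C(Y_j)$, so $\beta_j\alpha_j$ is an automorphism. Symmetrically $\alpha_j\beta_j$ is an automorphism, and hence $\alpha_j$ is an isomorphism, yielding $T[2] \cong T$ in $\C$.

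The main obstacle is the converse direction, specifically lifting the module-theoretic equality $F(T[2]) \cong F(T)$ to an actual isomorphism in $\C$. This is made possible by two complementary ingredients: the invariance of the number of indecomposable summands of a cluster-tilting object rules out stray $\add T[1]$-summands of $T[2]$, and the fact that morphisms factoring through $\add T[1]$ are non-invertible on indecomposables outside $\add T[1]$ allows one to transfer isomorphisms from $\mod A$ back to $\C$.
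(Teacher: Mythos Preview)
The paper does not give its own proof of this proposition; it is simply quoted from \cite[Prop.~4.4]{herschend_selfinjective_2011} as a known result. There is therefore nothing to compare against directly.

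That said, your argument is correct and is essentially the standard one. The forward direction is an immediate consequence of the 2-Calabi--Yau property. For the converse you invoke the well-known description of $F=\Hom_\C(T,-)$ (full, with kernel the ideal of maps factoring through $\add T[1]$), use a counting argument to show that no indecomposable summand of $T[2]$ can lie in $\add T[1]$, and then lift the isomorphisms $F(Y_j)\cong F(T_{\sigma(j)})$ back to $\C$ modulo radical terms. One minor remark: your counting step (``$A$ has exactly $n$ indecomposable projective summands'') tacitly assumes $T$ is basic; this is harmless since both conditions $T\cong T[2]$ and selfinjectivity of $\End_\C(T)$ are invariant under replacing $T$ by its basic part.
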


\section{Proofs of the main results}

In this section we give the proofs of the main results of this
article, see Theorems \ref{thm:2-rf-der-can}, \ref{thm:t2-cpx} and
\ref{thm:classification-ct-can}.

Note that by the definition of the cluster category associated to
$\XX$, we have a commutative diagram of functors
\[
\begin{tikzcd}
  \cohX \rar{\tau}\dar & \cohX\dar\\
  \C_\XX \rar{\tau} & \C_\XX
\end{tikzcd}
\]
where the vertical arrows correspond to the canonical projection
functor. The following characterization can be easily deduced from
known results.

\begin{proposition}
  \label{prop:tau2-stable}
  Let $T$ be a tilting complex in $\Db(\cohX)$.  Then, the following
  conditions are equivalent:
  \begin{enumerate}
  \item\label{it:2H-2RF} The algebra $\End_\XX(T)$ is a
    2-representation-finite algebra.
  \item\label{it:selfinjective-QP} The algebra $\End_\C(T)$ is a
    finite-dimensional selfinjective algebra.
  \item\label{it:Sigma2-eq} We have $T[2] \cong T$ in $\C_\XX$ and
    $\End_{\Db(\XX)}(T)$ has global dimension at most 2.
  \end{enumerate}
  Moreover, if $T\in\cohX$, then any of the three equivalent
  conditions above is equivalent to $T$ being $\tau^2$-stable.
\end{proposition}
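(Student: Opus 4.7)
The plan is to chain the equivalences $(a) \Leftrightarrow (b) \Leftrightarrow (c)$ by invoking, in order, the 3-preprojective characterization of 2-representation-finiteness, the cluster-categorical interpretation of $\End_\C(T)$, and the characterization of selfinjective endomorphism algebras via $T \cong T[2]$. The moreover clause will then be deduced from the definition of $\C_\XX$ as an orbit category.

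For $(a) \Leftrightarrow (b)$, I would first note that since $T$ is a tilting complex in the derived category of the hereditary category $\cohX$, the algebra $\Lambda := \End_\XX(T)$ automatically has global dimension at most $2$ (by the standard Happel bound for tilting complexes over a hereditary abelian category). By Proposition \ref{prop:EndC-Pi3} there is an isomorphism of graded algebras $\End_\C(T) \cong \Pi_3(\Lambda)$, and this algebra is finite-dimensional because $\C_\XX$ is Hom-finite. Proposition \ref{prop:charactetrization-2RF-selfinjective-QP} then yields the equivalence directly.

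For $(b) \Leftrightarrow (c)$, I would invoke Proposition \ref{prop:selfinj-QP-Sigma2}, which asserts that $T \cong T[2]$ in $\C$ if and only if $\End_\C(T)$ is selfinjective. Combined with the automatic finite-dimensionality of $\End_\C(T)$ and the automatic global-dimension bound on $\End_{\Db(\XX)}(T)$ noted above, this gives the equivalence.

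For the moreover clause, assume $T \in \cohX$. Since $\C_\XX = \Db(\cohX)/(\tau[-1])$, the endofunctor $\tau \circ [-1]$ becomes isomorphic to $\id_\C$, so $[1] \cong \tau$ and hence $[2] \cong \tau^2$ as autoequivalences of $\C$; thus $T[2] \cong T$ in $\C$ is equivalent to $\tau^2 T \cong T$ in $\C$. Because $\tau$ preserves $\cohX$, both $T$ and $\tau^2 T$ lie in $\cohX$, and since $\cohX$ is a complete system of representatives of isomorphism classes in $\C_\XX$ (\cite[Prop. 2.1]{barot_cluster_2010}), this is equivalent to $\tau^2 T \cong T$ already holding in $\cohX$, i.e.\ to $T$ being $\tau^2$-stable. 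The only delicate point is the identification $[2] \cong \tau^2$ on $\C$ and the corresponding translation between isomorphism classes in $\cohX$ and in $\C_\XX$; no substantive obstacle is expected, as the proof is essentially an assembly of previously stated results.
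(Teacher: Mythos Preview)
Your overall strategy---chaining Propositions \ref{prop:charactetrization-2RF-selfinjective-QP}, \ref{prop:EndC-Pi3}, and \ref{prop:selfinj-QP-Sigma2} for the three-way equivalence, then using the orbit-category description of $\C_\XX$ for the final clause---is exactly the paper's. Your treatment of the ``moreover'' paragraph is correct and essentially identical to the paper's.

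There is, however, a genuine error in your argument for the equivalences. You assert that for \emph{any} tilting complex $T$ in $\Db(\cohX)$ the algebra $\Lambda = \End_{\Db(\XX)}(T)$ automatically has global dimension at most $2$, citing a ``standard Happel bound.'' No such bound exists for tilting \emph{complexes}: Happel's inequality $\mathrm{gl.dim}\leq 2$ holds for endomorphism algebras of tilting \emph{objects} in a hereditary abelian category (tilting sheaves in $\cohX$), not for arbitrary tilting complexes in the derived category. Piecewise hereditary algebras can have arbitrarily large finite global dimension; this is precisely why condition \eqref{it:Sigma2-eq} retains the clause ``$\End_{\Db(\XX)}(T)$ has global dimension at most $2$'' rather than omitting it as redundant. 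Your false claim is load-bearing: you invoke it to verify the standing hypothesis of Proposition \ref{prop:charactetrization-2RF-selfinjective-QP} (and of Proposition \ref{prop:EndC-Pi3}) in the direction $(b)\Rightarrow(a)$, and to declare the global-dimension clause in \eqref{it:Sigma2-eq} automatic for $(b)\Leftrightarrow(c)$. The paper does not make this claim; it keeps the global-dimension condition as a genuine part of \eqref{it:Sigma2-eq}, and in the subsequent Proposition \ref{prop:tau2-2RF} it \emph{proves} $\mathrm{gl.dim}\,\Lambda\leq 2$ from the stronger hypothesis $\tau^2 T\cong T$ in $\Db(\cohX)$. (The paper's own proof is admittedly terse on how the global-dimension condition enters, but it does not invoke the false bound you cite.) Note, incidentally, that in the ``moreover'' setting $T\in\cohX$ is a tilting sheaf, so there your bound \emph{is} valid---but only there.
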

\begin{proof}
  \eqref{it:2H-2RF} is equivalent to \eqref{it:selfinjective-QP}. Let
  $\Lambda:=\End_\XX(T)$. By Proposition
  \ref{prop:charactetrization-2RF-selfinjective-QP}, the algebra
  $\Lambda$ is 2-representation-finite if and only if $\Pi_3(\Lambda
  )$ is a selfinjective finite dimensional algebra. Moreover,
  Proposition \ref{prop:EndC-Pi3} yields an isomorphism between
  $\Pi_3(\Lambda)$ and $\End_\C(T)$. The claim follows.  The
  equivalence between \eqref{it:selfinjective-QP} and
  \eqref{it:Sigma2-eq} is shown in Proposition
  \ref{prop:selfinj-QP-Sigma2}.

  Finally, let $T\in\cohX$. We show that \eqref{it:Sigma2-eq} is
  equivalent to $T$ being $\tau^2$-stable. Note that, by the
  definition of $\C$, the functors $[1]\colon\C\to\C$ and
  $\tau\colon\C\to\C$ are naturally isomorphic. Hence, we have
  $T[2]\cong\tau^2T$ as objects of $\C$ and, since isomorphism classes
  in $\C$ and $\cohX$ coincide, we have $T[2]\cong\tau^2T$ in
  $\cohX$. The claim follows.
\end{proof}

In the case of $\tau^2$-stable tilting sheaves we obtain further
restrictions on their endomorphism algebras.

\begin{proposition}
  \label{prop:tau2-2RF}
  Let $T$ be a tilting complex in $\Db(\cohX)$. Then, $T$ is
  $\tau^2$-stable if and only if $\End_{\Db(\XX)}(T)$ is a
  2-homogeneous 2-representation-finite algebra.
\end{proposition}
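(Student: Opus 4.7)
My plan is to deduce the statement from Proposition \ref{prop:tau2-stable} (which already handles the 2-representation-finite part) together with a direct translation of the 2-homogeneity condition into $\tau^2$-stability via the tilting equivalence induced by $T$. Set $\Lambda:=\End_\XX(T)$ and assume, as we may, that $T$ is basic. Since $\cohX$ is hereditary, tilting theory gives a triangle equivalence $\Db(\mod\Lambda)\simeq\Db(\cohX)$ sending $\Lambda$ to $T$, and $\Lambda$ has global dimension at most $2$. Moreover, the Nakayama functor $\nu$ of $\Db(\mod\Lambda)$ corresponds to the Serre functor $\tau[1]$ of $\Db(\cohX)$, so that $\nu_2=\nu[-2]$ corresponds to $\tau[-1]$. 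In particular, $\nu_2^{-1}(\Lambda)$ corresponds to $\tau^{-1}T[1]$, while $\add(D\Lambda)$ (the class of injective $\Lambda$-modules) corresponds to $\add(\tau T[1])$.

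With this dictionary in hand, the core step is the following translation: by Definition \ref{def:2RF}, $\Lambda$ is 2-homogeneous if and only if $\nu_2^{-1}(\Lambda)\in\add(D\Lambda)$. Under the identifications above this becomes $\tau^{-1}T[1]\in\add(\tau T[1])$, equivalently $\tau^{-2}T\in\add T$. Because $\tau^{-2}$ is an autoequivalence and $T$ is basic, this is in turn equivalent to $\tau^{-2}T\cong T$, that is, to $T$ being $\tau^2$-stable.

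To assemble the equivalence: if $T$ is $\tau^2$-stable, then in the orbit category $\C_\XX$ the functors $[1]$ and $\tau$ coincide, so $T\cong\tau^2T\cong T[2]$ in $\C_\XX$; Proposition \ref{prop:tau2-stable} then gives that $\Lambda$ is 2-representation-finite, and the preceding paragraph gives that it is also 2-homogeneous. Conversely, if $\Lambda$ is 2-homogeneous and 2-representation-finite, the preceding paragraph yields $\tau^2T\cong T$ immediately.

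The main obstacle is the shift bookkeeping under the tilting equivalence: one must verify cleanly that the Nakayama functor corresponds to $\tau[1]$ (so that $\nu_2$ corresponds to $\tau[-1]$) and that injective $\Lambda$-modules correspond precisely to $\add(\tau T[1])$, so that the condition ``$\nu_2^{-1}(\Lambda)$ is injective'' transports into $\tau^{-2}T\in\add T$. Once this is set up, the rest of the argument is a direct unpacking of Definition \ref{def:2RF} together with Proposition \ref{prop:tau2-stable}.
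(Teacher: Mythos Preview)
Your overall strategy coincides with the paper's: transport the condition $\nu_2^{-1}(\Lambda)\in\add(D\Lambda)$ through the derived equivalence $\Db(\mod\Lambda)\simeq\Db(\cohX)$, identifying $\nu_2$ with $\tau[-1]$, and read it off as $\tau^2T\cong T$. The converse direction and the $2$-homogeneous bookkeeping are fine.

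There is, however, a genuine gap in the forward direction. Your assertion ``Since $\cohX$ is hereditary, \ldots\ and $\Lambda$ has global dimension at most $2$'' is not justified and is in fact false for arbitrary tilting \emph{complexes} $T\in\Db(\cohX)$: piecewise hereditary algebras (endomorphism algebras of tilting complexes over a hereditary category) can have arbitrarily large finite global dimension. The bound does hold for tilting \emph{sheaves} in $\cohX$, but the proposition is stated for complexes. Without this bound you cannot invoke condition \eqref{it:Sigma2-eq} of Proposition~\ref{prop:tau2-stable}, and the very notion of $2$-representation-finiteness presupposes global dimension at most $2$.

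The paper deduces the bound from the hypothesis $\tau^2T\cong T$ itself: once one has $D\Lambda\cong\nu_2^{-1}\Lambda=\nu^{-1}\Lambda[2]$ in $\Db(\mod\Lambda)$ (your translation step), one computes for each $i\geq 3$ that
\[
\Ext_\Lambda^i(D\Lambda,\Lambda)\cong\Hom_{\Db(\Lambda)}(\nu^{-1}\Lambda[2],\Lambda[i])\cong D\Hom_{\Db(\Lambda)}(\Lambda[i-2],\Lambda)=0,
\]
whence $\Lambda$ has global dimension at most $2$. That computation is exactly the step your argument is missing.
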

\begin{proof}
  Let $T$ be a tilting complex in $\Db(\cohX)$ and set
  $\Lambda:=\End_{\Db(\XX)}(T)$. Then, we have $\tau^2 T\cong T$ if
  and only if $(\nu[-1])^2(\Lambda)\cong \Lambda$ which is equivalent
  to $D\Lambda=\nu\Lambda \cong \nu_2^{-1}\Lambda$. Hence, to show
  that $\Lambda$ is a 2-homogeneous 2-representation-finite algebra,
  see Definition \ref{def:2RF}, we only need to show that if $T$ is
  $\tau^2$-stable then $\Lambda$ has global dimension at most
  2. Indeed, for each $i\geq 3$ we have
  \[
  \Ext_\Lambda^i(D\Lambda,\Lambda)\cong\Hom_{\Db(\Lambda)}(\nu^{-1}\Lambda[2],\Lambda[i])\cong
  D\Hom_{\Db(\Lambda)}(\Lambda[i-2],\Lambda)=0.
  \]
  Thus $\Lambda$ has global dimension at most 2 as required.
\end{proof}

The following result is crucial in our approach, as it allows to pass
from $\tau^2$-stable tilting complex to $\tau^2$-stable tilting
sheaves using 2-APR-(co)tilting.  Recall that the effect of
2-APR-(co)tilting on the endomorphism algebras of basic tilting
complexes can be described using mutations of graded quivers with
potential, see Theorem \ref{thm:2-APR-QPs}.

\begin{proposition}
  \label{prop:reduction-to-cohX}
  Let $T$ be a basic tilting complex in $\Db(\cohX)$ such that
  $\End_{\Db(\XX)}(T)$ is a 2-representation-finite algebra. Then,
  there exists a $\tau^2$-stable tilting sheaf $E\in\coh \XX$ obtained
  by iterated 2-APR-tilting from $T$.
\end{proposition}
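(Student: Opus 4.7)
The plan is to reduce $T$ to a tilting sheaf by iterated 2-APR-tilting, arguing by induction on an amplitude invariant.

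First, I would reformulate the hypothesis. By Proposition \ref{prop:tau2-stable}, the condition that $\End_{\Db(\XX)}(T)$ be 2-representation-finite is equivalent to $T[2] \cong T$ in $\C_\XX$. Since $\cohX$ is hereditary, every indecomposable object of $\Db(\cohX)$ has the form $F[n]$ with $F \in \cohX$ indecomposable; write $T = \bigoplus_{i \in I} F_i[n_i]$ accordingly. A lift of the isomorphism $T[2] \cong T$ to $\Db(\cohX)$ takes the form $T \cong \tau^n T[-n-2]$ for some $n \in \ZZ$, inducing a bijection $\pi$ of $I$ with $n_{\pi(i)} = n_i + n + 2$; summing over $i$ forces $(n+2)|I|=0$, hence $n = -2$. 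Consequently $\tau^2 T \cong T$ in $\Db(\cohX)$ and there is a permutation $\sigma$ of $I$ with $\tau^2 F_i \cong F_{\sigma(i)}$ and $n_{\sigma(i)} = n_i$.

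Next, I would proceed by induction on the amplitude $a(T) := \max_i n_i - \min_i n_i$. The base case $a(T)=0$ is immediate: after the autoequivalence $[-\min_i n_i]$ of $\Db(\cohX)$, the complex $T$ becomes a tilting sheaf which is still $\tau^2$-stable. For the inductive step, I would examine the extremal layer $T^{d_{\max}} := \bigoplus_{n_i = d_{\max}} F_i$, a rigid $\tau^2$-stable sheaf in $\cohX$ (rigidity from that of $T$, $\tau^2$-stability since $\sigma$ preserves degrees). By Lemma \ref{lemma:T0-semisimple}, its indecomposable summands are exceptional simple sheaves, hence pairwise $\Hom$-orthogonal; each such summand $F_k[d_{\max}]$ is therefore a sink of $Q_T$. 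To make progress we need a source at $d_{\max}$ to 2-APR-cotilt and push down in degree: by Serre duality, $k$ at $d_{\max}$ is a source of $Q_T$ exactly when $\tau^{-1}F_k$ is not a summand at degree $d_{\max}-1$. A pigeonhole argument along the $\tau$-orbits in the extremal tubes, together with the $\sigma$-orbit constraint $n_{\sigma(i)}=n_i$, exhibits such a $k$; the 2-APR-cotilt replaces $F_k[d_{\max}]$ by $\tau F_k[d_{\max}-1]$, and iterating along the $\sigma$-orbit (which sits entirely at degree $d_{\max}$) removes the full orbit from the extremal layer, strictly decreasing $a(T)$. At each step the new complex has 2-representation-finite endomorphism algebra by Theorem \ref{thm:2-APR-2-RF}, so the induction hypothesis applies.

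The hard part will be the existence of a suitable source at the extremal degree in every case. In the generic tubular types $(2,4,4)$ and $(2,3,6)$, the rigidity of the orbit structure together with the finite rank of $\K_0(\XX)$ and Lemma \ref{lemma:T0-semisimple} make the pigeonhole argument go through; the delicate point is the tubular type $(2,2,2,2;\lambda)$, where $\tau^2$ acts as the identity on the whole of $\cohX$ and Lemma \ref{lemma:T0-semisimple} does not impose the exceptional-simples structure on $T^{d_{\max}}$. I would handle this case separately, either by direct inspection using the explicit list in Figure \ref{fig:2222-2-APR} or by exploiting the derived equivalence between $\cohX$ and $\mod H$ for a tame hereditary algebra $H$ of type $\widetilde{D}_4$ and invoking the analogous reduction of tilting complexes to tilting modules via iterated APR-tilting in the hereditary setting.
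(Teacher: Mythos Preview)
Your proposal has a genuine gap at the very first step. You claim that $\End_{\Db(\XX)}(T)$ being 2-representation-finite forces $\tau^2 T\cong T$ in $\Db(\cohX)$, but this is false: by Proposition~\ref{prop:tau2-2RF}, $\tau^2$-stability of $T$ is equivalent to $\End_{\Db(\XX)}(T)$ being \emph{2-homogeneous} 2-representation-finite, and the classification (compare Figures~\ref{fig:244-2RF} and~\ref{fig:244-2-APR}) shows that these two classes genuinely differ. The flaw in your lifting argument is the assertion that an isomorphism $T[2]\cong T$ in $\C_\XX$ lifts to $T\cong\tau^nT[-n-2]$ for a \emph{single} integer $n$. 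Morphisms in the orbit category are direct sums over all powers of $\tau[-1]$, so matching indecomposable summands only yields, for each $i$, an integer $m_i$ with $F_i[n_i+2]\cong\tau^{m_i}F_{\pi(i)}[n_{\pi(i)}-m_i]$; summing gives $\sum_i m_i=-2|I|$, not $m_i\equiv-2$. Without $\tau^2$-stability of $T$ itself, the rest of your outline collapses: you cannot invoke Lemma~\ref{lemma:T0-semisimple} on the extremal layer, and there is no permutation $\sigma$ preserving the degrees $n_i$ along which to organise the cotilts.

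The paper's proof avoids all of this by never appealing to $\tau^2$-stability during the reduction. It simply takes the \emph{entire} extremal layer $P:=T_\ell[-\ell]$ at once and checks the 2-APR conditions of Definition~\ref{def:2-apr-tilting} directly: $\Hom_{\Db(\XX)}(Q,P)=0$ and $\Ext^1_{\Db(\XX)}(\nu Q,P)=0$ follow from hereditariness of $\cohX$ together with $\nu=\tau[1]$, while vanishing of the higher $\Ext$ groups uses only that $\End_{\Db(\XX)}(T)$ has global dimension at most~2. A single 2-APR tilt then replaces $T_\ell[-\ell]$ by $\tau^{-1}T_\ell[1-\ell]$, decreasing the amplitude; Theorem~\ref{thm:2-APR-2-RF} preserves 2-representation-finiteness, and induction finishes. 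There is no need for Lemma~\ref{lemma:T0-semisimple}, no search for individual sources, no pigeonhole argument, and no case split on the weight type. Only once the resulting $E$ lies in $\cohX$ does Proposition~\ref{prop:tau2-stable} supply $\tau^2$-stability.
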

\begin{proof}
  Since shifting does not change endomorphism algebras, we can assume
  that $T$ is concentrated in degrees $-\ell,\dots,-1,0$. Since
  $\cohX$ is hereditary, we have $T\cong
  T_\ell[-\ell]\oplus\dots\oplus T_1[-1]\oplus T_0$ where each $T_i$
  is a non-zero sheaf.  We proceed by induction on $\ell$. The case
  $\ell=0$ follows immediately from Proposition
  \ref{prop:tau2-stable}, so let $\ell>0$. We claim that the complex
  \[
  T': = (\tau^{-1} T_\ell)[1-\ell]\oplus
  T_{\ell-1}[1-\ell]\oplus\dots\oplus T_1[-1]\oplus T_0
  \]
  is a 2-APR-tilting complex.  Indeed, since there are no negative
  extensions between objects of $\cohX$, we have
  \[
  \bigoplus_{i=0}^{\ell-1}
  \Hom_{\Db(\XX)}(T_i[-i],T_\ell[-\ell])=\bigoplus_{i=0}^{\ell-1}
  \Hom_{\Db(\XX)}(T_i,T_\ell[i-\ell])=0.
  \]
  
  Moreover, using the identity $\nu=\tau[1]$, we obtain
  \begin{align*}
    \bigoplus_{i=0}^{\ell-1} \Ext_{\Db(\XX)}^1(\nu
    T_i[-i],T_\ell[-\ell])
    \cong& \bigoplus_{i=0}^{\ell-1}\Hom_{\Db(\XX)}(\tau T_i[-i],T_\ell[-\ell])\\
    \cong& \bigoplus_{i=0}^{\ell-1} \Hom_{\Db(\XX)}(\tau
    T_i,T_\ell[i-\ell])=0.
  \end{align*}
  Finally, since $\End_{\Db(\XX)}(T)$ has global dimension 2 we have
  that
  \[
  \bigoplus_{i=0}^{\ell-1} \Ext_{\Db(\XX)}^j(\nu
  T_i[-i],T_\ell[-\ell])=0
  \]
  for all $j\geq 3$. This shows that $T'$ is a 2-APR tilting complex
  and, by Theorem \ref{thm:2-APR-2-RF}, we have that
  $\End_{\Db(\XX)}(T')$ is a 2-representation-finite algebra. Hence,
  by the induction hypothesis, by iterated 2-APR-tilting we can
  construct a $\tau^2$-stable tilting sheaf $E$ from $T$.
\end{proof}

Next, we determine which weighted projective lines can have
$\tau^2$-stable tilting sheaves.

\begin{proposition}
  \label{prop:cases}
  Let $T\in\coh\XX$ be a $\tau^2$-stable tilting sheaf.  Then $\XX$
  has tubular weight type $(2,2,2,2)$, $(2,4,4)$ or $(2,3,6)$.
\end{proposition}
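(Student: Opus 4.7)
The plan is to proceed in two stages. First I would use the slope formula (Proposition~\ref{prop:slope}) to force $\delta(\vecw)=0$, thereby reducing to the four tubular types. Then I would rule out the type $(3,3,3)$ by a short computation showing that in that case $\tau^2$-stability already implies $\tau$-stability, which is incompatible with the rigidity of a tilting sheaf.

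For the first stage I would decompose $T = T_+ \oplus T_0$ as in Theorem~\ref{thm:props-cohX}\eqref{it:decomposition} and first check that $T_+ \neq 0$. Indeed, if every indecomposable summand of $T$ were in $\coh_0\XX$, then every such summand would have rank $0$; since the classes of the indecomposable summands of a tilting complex generate $\K_0(\XX)$, the rank function $\rk\colon \K_0(\XX)\to\ZZ$ would then be identically zero, contradicting $\rk(\OO)=1$. Because $\tau$ preserves $\vect\XX$ and $\coh_0\XX$, we have $\tau^2 T_+ \cong T_+$, so $\tau^2$ permutes the finitely many indecomposable summands of $T_+$. Pick any such summand $X$ and an integer $k\geq 1$ with $\tau^{2k}X\cong X$. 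Iterating Proposition~\ref{prop:slope} then yields
\[
\slope(X) = \slope(\tau^{2k}X) = \slope(X) + 2k\,\delta(\vecw),
\]
so $\delta(\vecw)=0$. By equation~\eqref{eq:delta-euler} this means $\chi(\XX)=0$, hence $\p\in\{(2,2,2,2),(3,3,3),(2,4,4),(2,3,6)\}$.

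For the second stage I would rule out the type $(3,3,3)$ directly. In this case $p=\lcm(3,3,3)=3$, and since $\vecw=\vecc-\vecx_1-\vecx_2-\vecx_3$ one computes $3\vecw=3\vecc-3\vecx_1-3\vecx_2-3\vecx_3=0$ in $\LL$; hence $\tau^3=\id_{\cohX}$. Combined with the hypothesis $\tau^2 T\cong T$ this forces $\tau T\cong \tau^3T\cong T$, and Serre duality then gives
\[
\Ext^1_\XX(T,T)\cong D\Hom_\XX(T,\tau T)\cong D\End_\XX(T)\neq 0,
\]
contradicting the rigidity of $T$. Therefore the weight type must be one of $(2,2,2,2)$, $(2,4,4)$ or $(2,3,6)$.

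The main obstacle, if there is one, is really the elementary observation that $\tau^2$-stability collapses to $\tau$-stability precisely when $2$ is coprime to the order of $\vecw$ in $\LL$, which among tubular types happens exactly in $(3,3,3)$; the remaining types have $\vecw$ of even order $2$, $4$ or $6$, so this shortcut is unavailable and one really needs the detailed analysis of later sections to produce the explicit classification stated in Theorem~\ref{intro:classification-t2}.
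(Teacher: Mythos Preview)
Your proof is correct. The first stage is essentially the paper's argument, only made a bit more explicit: the paper just observes that a tilting sheaf cannot be pure torsion, so $\slope(T)\in\QQ$, and then applies Proposition~\ref{prop:slope} directly to conclude $\delta(\vecw)=0$; you obtain the same conclusion by passing to an indecomposable vector-bundle summand and a finite $\tau^2$-orbit.

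The second stage is where you genuinely diverge. The paper argues tube-by-tube: in tubular type every indecomposable sheaf lies in a tube, and if some summand $X$ of $T$ sits in a tube of odd rank $2a+1$ then $\tau^{2a}X$ is also a summand and $\tau(\tau^{2a}X)\cong X$, so Serre duality gives $\Ext^1_\XX(\tau^{2a}X,X)\cong D\End_\XX(X)\neq 0$, contradicting rigidity; this rules out all tubes of odd rank and hence type $(3,3,3)$. Your argument is global and shorter: in type $(3,3,3)$ the element $\vecw$ has order $3$, so $\tau^3=\id_{\cohX}$, and combined with $\tau^2T\cong T$ this forces $\tau T\cong T$ and the same Serre-duality contradiction follows immediately. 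Your route avoids invoking the tubular structure theorem (that each slope gives a tube category); the paper's route, in exchange, isolates the slightly more general principle that no indecomposable summand of a $\tau^2$-stable rigid sheaf can live in a tube of odd rank, which is of independent interest.
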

\begin{proof}
  Since there are no tilting sheaves of finite length, we have
  $\slope(T)\in\QQ$.  Moreover, as we have $\tau^2 T\cong T$, it
  follows from Proposition \ref{prop:slope} that $\delta(\vecw)=0$.
  Then, using equation \eqref{eq:delta-euler}, we have that
  $\chi(\XX)=0$ hence $\XX$ has tubular type.

  We recall if $\XX$ has tubular type, then the full subcategory of
  $\cohX$ given of all sheaves of a fixed slope is equivalent to the
  category $\coh_0\XX$ of torsion sheaves over $\XX$
  \cite[Thm. 3.10]{lenzing_weighted_2011}.  Assume now that $X$ is an
  indecomposable summand of $T$ which belongs to a tube of odd period
  $2a+1$, so we have $X \cong \tau(\tau^{2a} X)$.  By hypothesis,
  $\tau^{2a} X$ is a direct summand of $T$. Hence, by Serre duality we
  have
  \[
  0 = \Ext_\XX^1(\tau^{2a}X,X) \cong
  D\Hom_\XX(X,\tau(\tau^{2a}X))=D\Hom_\XX(X,X),
  \]
  a contradiction.  Hence every indecomposable summand of $T$ belongs
  to a tube of even period. This rules out weight type $(3,3,3)$.
  Therefore must have tubular weight type $(2,2,2,2)$, $(2,4,4)$ or
  $(2,3,6)$.
\end{proof}

The following result gives a classification of the endomorphism
algebras of basic $\tau^2$-stable tilting sheaves in $\cohX$.

\begin{theorem}[Lenzing]
  \label{thm:classification-tau2X}
  Let $T$ be a basic $\tau^2$-stable tilting sheaf in $\cohX$.  Then
  $\End_\XX(T)$ is isomorphic to one of the algebras indicated in
  Figures \ref{fig:2222-2-APR}, \ref{fig:244-multi} or
  \ref{fig:236-multi}.
  \begin{figure}[t]
    \includegraphics[scale=1]{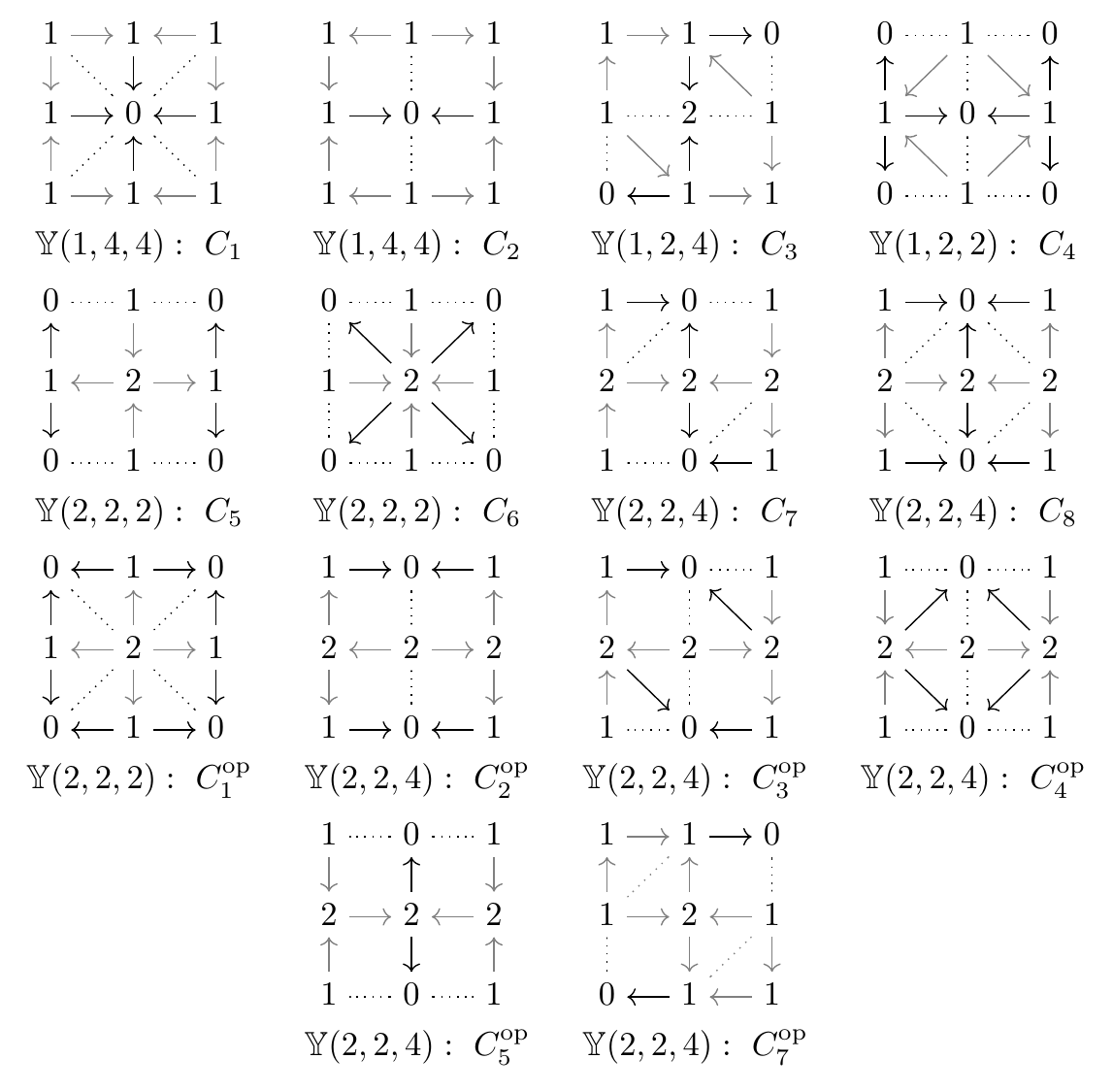}
    \caption{Endomorphism algebras of $\tau^2$-stable tilting sheaves
      in normal position over a weighted projective line of weight
      type $(2,4,4)$. We have indicated $\End_\XX(T_+)$ by gray
      arrows. Note that $\tau^2$ acts on each configuration by
      rotation by $\pi$ and that $C_6$ and $C_8$ are
      self-opposite. The weight type of the reduced weighted
      projective line $\YY$ is indicated for reference.}
    \label{fig:244-multi}
  \end{figure}
  \begin{figure}[t]
    \includegraphics[scale=1]{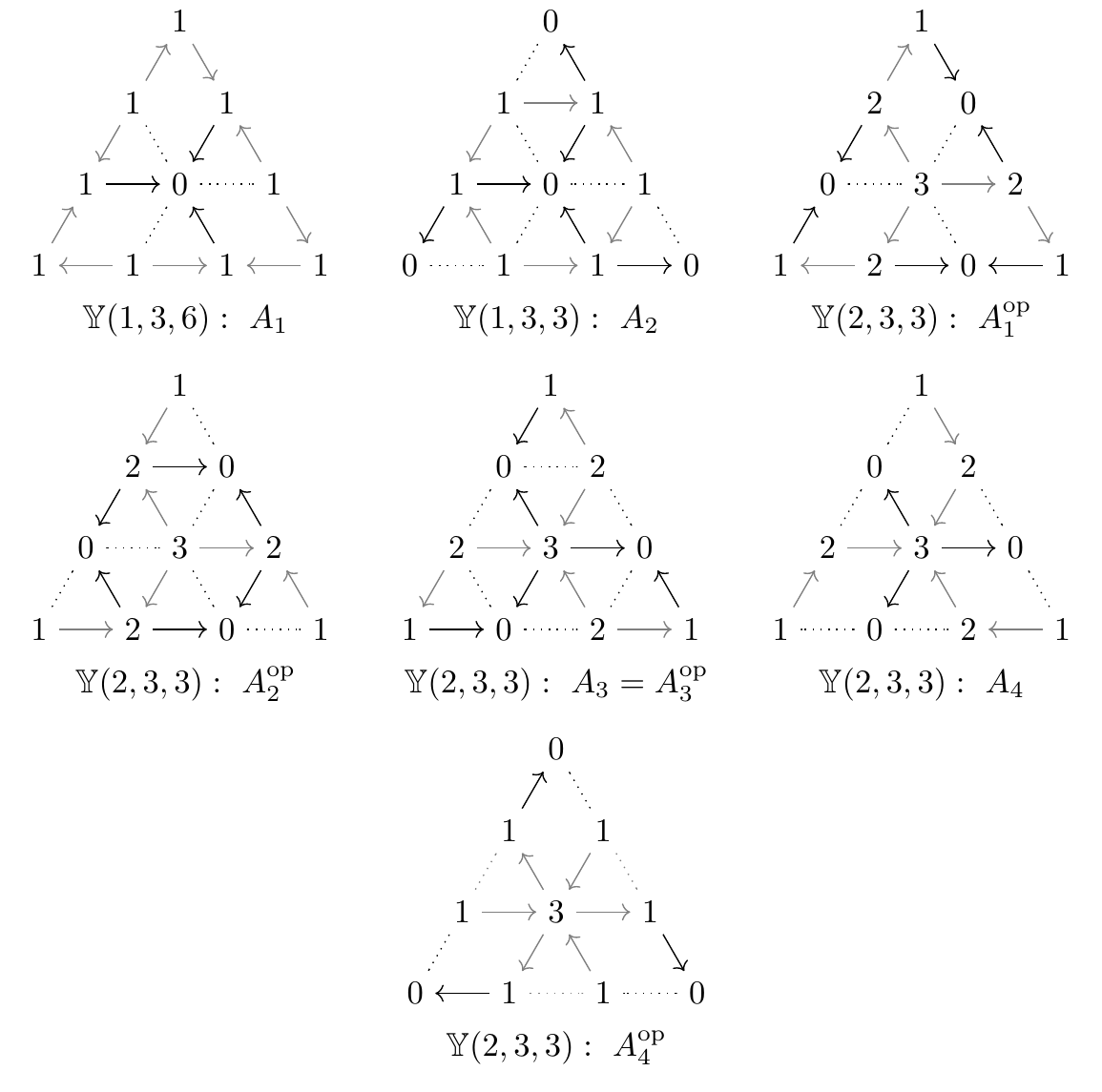}
    \caption{Endomorphism algebras of $\tau^2$-stable tilting sheaves
      in normal position over a weighted projective line $\XX$ of
      weight type $(2,3,6)$.  We have indicated $\End_\XX(T_+)$ by
      gray arrows. Note that $\tau^2$ acts on each configuration by
      left rotation by $\pi/3$. The weight type of the reduced
      weighted projective line $\YY$ is indicated for reference.}
    \label{fig:236-multi}
  \end{figure}
\end{theorem}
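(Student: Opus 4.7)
The plan is to reduce the classification to a combinatorial problem in the derived category of a tame hereditary algebra, using the perpendicular category machinery of Theorem \ref{thm:techincal-general} together with the shift-action constraint imposed by $\tau^2$-stability.

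First, I would dispose of the weight-type trichotomy: by Proposition \ref{prop:cases}, any $\tau^2$-stable tilting sheaf lives over an $\XX$ of type $(2,2,2,2;\lambda)$, $(2,4,4)$ or $(2,3,6)$. For type $(2,2,2,2;\lambda)$ the dualizing element satisfies $2\vecw = 0$, so in fact $\tau^2 = \id$ on $\cohX$ and every basic tilting sheaf is $\tau^2$-stable; the classification in Figure \ref{fig:2222-2-APR} then follows from the known classification of tilting sheaves in $\cohX$ (Skowro\'nski, Barot--de la Pe\~na, Meltzer), which I would simply quote. So the genuine work is in types $(2,4,4)$ and $(2,3,6)$, where $2\vecw$ has order $2$ respectively $3$ in $\LL$.

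Next, using Proposition \ref{prop:normal-position} I may apply an autoequivalence of $\Db(\cohX)$ to bring $T$ into normal position, i.e.\ $T_0 \neq 0$. By Lemma \ref{lemma:T0-semisimple}, $T_0$ is a direct sum of exceptional simple sheaves, concentrated at some subset of $\blambda$ with multiplicities $q_i < p_i$. The perpendicular category $T_0^\perp$ is then equivalent to $\coh\YY$ for $\YY$ of weight type $(p_i - q_i)_i$ by Theorem \ref{thm:techincal-general}\eqref{it:perpendicular-category}, and the tubular hypothesis forces $\chi(\YY)>0$ by \eqref{it:Y-H}, so $\coh\YY$ is derived equivalent to $\mod H$ for a tame hereditary algebra of extended Dynkin type and $\vect\YY$ has Auslander--Reiten quiver of shape $\ZZ\Delta$. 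The sheaf $T_+$ becomes a tilting bundle in $\coh\YY$ containing a line bundle summand, by \eqref{it:line-bundle-as-summand}.

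The core step is the use of the shift action. Since $T$ is $\tau^2$-stable, choosing $\vecx = 2\vecw$ we have $T(\vecx) \cong T$, and by Theorem \ref{thm:techincal-general}\eqref{it:action-vecY} the induced automorphism of $\coh\YY$ acts freely on the line bundles there. Translating this to the $\ZZ\Delta$-picture, the action of $\vecx$ corresponds to a power of the translation of $\ZZ\Delta$, and the free-action condition together with rigidity of $T_+$ restricts the possible configurations of the indecomposable summands of $T_+$ drastically: each indecomposable summand must be the starting point of a finite orbit that fits inside the slice imposed by the tilting condition, and the orbits are parameterized by the possible positions of the required line bundle summand. I would enumerate, for each possible reduced weight type (i.e.\ each possible $\YY$ obtained by deleting exceptional simples from $(2,4,4)$ or $(2,3,6)$), the free orbits of $\langle\vecx\rangle$ on line bundles in $\coh\YY$, and then list the rigid extensions up to a full tilting bundle; the possibilities are few because the orbit-length divides the order of $\vecx$ ($2$ or $3$) and the total number of summands is fixed.

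Finally, having found the possible pairs $(\YY, T_+)$, I would use Theorem \ref{thm:multiextension} to reassemble the endomorphism algebra of $T$: $\End_\XX(T)$ is the upper-triangular matrix algebra with diagonal blocks $\End_\YY(T_+)$ and $\End_\YY(E)$ (the latter being a product of matrix algebras associated with the exceptional simple sheaves at the selected weights) and off-diagonal block $\Hom_\YY(T_+, E)$. A direct reading off of the resulting quiver with relations produces exactly the lists in Figures \ref{fig:244-multi} and \ref{fig:236-multi}. The main obstacle I anticipate is the bookkeeping in this final enumeration: making sure no case is missed and that distinct cases give non-isomorphic algebras, both of which are controlled by the free orbit count on line bundles and the $\tau^2$-symmetry indicated in the figures (rotation by $\pi$ for $(2,4,4)$ and by $2\pi/3$ for $(2,3,6)$).
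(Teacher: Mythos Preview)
Your approach is essentially the same as the paper's, and the overall architecture---trichotomy via Proposition~\ref{prop:cases}, normal position, Lemma~\ref{lemma:T0-semisimple}, perpendicular reduction to $\coh\YY$ via Theorem~\ref{thm:techincal-general}, then reassembly by Theorem~\ref{thm:multiextension}---is exactly what the paper does. There are two points, however, where the paper does more than you anticipate.

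First, a technical mis-statement: you say the induced action of $2\vecw$ on $\vect\YY$ ``corresponds to a power of the translation of $\ZZ\Delta$''. This is not true in general: a degree shift on $\coh\YY$ may act on the AR-quiver $\ZZ\Delta$ as a translation \emph{composed with a non-trivial automorphism of $\Delta$}. The paper's extended example treats $\YY$ of type $(2,2,4)$, so $\Delta=\tilde D_6$, and shows that the $\tau^2$-action is degree shift by $\vecy+2\vecw_\YY$, which on $\ZZ\tilde D_6$ is ``rotation along the horizontal axis'', involving the graph automorphism swapping the end-vertex pairs. This matters: there are entries in the Happel--Vossieck list admitting an abstract order-$2$ symmetry that does \emph{not} coincide with the actual $\tau^2$-action, and these do not extend to $\tau^2$-stable tilting sheaves on $\XX$.

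Second, and for exactly this reason, the paper inserts a verification step you omit. After building a candidate $\End_\XX(T)$ via Theorem~\ref{thm:multiextension}, one must check that it is $2$-representation-finite (Proposition~\ref{prop:charactetrization-2RF-selfinjective-QP}), equivalently that the assembled $T$ really is $\tau^2$-stable (Proposition~\ref{prop:tau2-stable}); alternatively one first pins down the precise $\tau^2$-action on $\vect\YY$, as the paper does for the $(2,2,4)$ case. Either route is genuine work, not bookkeeping, and is the only place where spurious candidates are eliminated. The paper also enumerates the possibilities for $\End_\YY(T_+)$ directly from the Happel--Vossieck list of concealed-canonical algebras, rather than from your orbit argument on line bundles; this is the practical way the ``enumeration'' step is actually carried out.
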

\begin{proof}
  First, suppose that $\XX$ has type $(2,2,2,2;\lambda)$.  Since
  $\tau^2$ is the identity in $\cohX$, all tilting sheaves are
  $\tau^2$-stable in this case.  Their endomorphism algebras are
  known, see Skowro{\'n}ski
  \cite[Ex. 3.3]{skowronski_selfinjective_1989} (see also Figure
  \ref{fig:2222-2-APR}).
  
  For the other cases, weight types $(2,4,4)$ and $(2,3,6)$, we rely
  on the following argument which is due to Lenzing.  Let $T$ be a $\tau^2$-stable tilting
  sheaf in $\cohX$.  By Proposition \ref{prop:normal-position}, we can
  assume that $T$ is in normal position. By Lemma
  \ref{lemma:T0-semisimple} every indecomposable direct summand of
  $T_0$ is an exceptional simple sheaf.  Also, the perpendicular
  category $T_0^\perp$ is equivalent to a category of the form
  $\coh\YY$ where $\YY$ is a weighted projective line with
  $\chi(\YY)>0$. Moreover, there exist a finite dimensional algebra
  $H$ of extended Dynkin type $\Delta$ such that $\coh\YY$ is derived
  equivalent to $\coh\YY$. In addition, we have $T_+\in\vect\YY$, see
  Proposition \ref{thm:techincal-general}. It follows that
  $\End_\XX(T_+)=\End_\YY(T_+)$ is isomorphic to the endomorphism
  algebra of a preprojective $H$-module.  Note that $\End_\YY(T_+)$
  must admit an action of order $p/2$ which does not fix any line
  bundle summands of $T_+$, see by Proposition
  \ref{thm:techincal-general}\eqref{it:action-vecY}. Finally,
  Proposition \ref{thm:multiextension} yields an isomorphism of
  algebras
  \[
  \End_\XX(T)\cong\End_\YY(T_+\oplus E)\cong
  \begin{bmatrix}
    \End_\YY(T_+)&\End_\YY(T_+,E)\\
    0 & \End_\YY(E)
  \end{bmatrix}
  \]
  where $E$ is the direct sum of all regular simple modules in
  $\coh\YY$ in the exceptional tubes concentrated in the $\lambda_i$'s
  such that $q_i\neq 0$ (note that here $q_i$ must be either zero or
  $p_i/2$). It follows that $\End_\YY(E)$ is a semisimple algebra with
  $q_1+\cdots+q_t$ simple modules.

  Hence, to prove the theorem we only need to do the following:
  \begin{enumerate}
  \item Take a vector bundle in $T_+\in\coh\YY$ whose endomorphism
    algebra admits a symmetry of order 2 for $\XX$ of type (2,4,4) or
    order 3 for $\XX$ of type (2,3,6) not fixing any line bundles.
  \item Compute the algebra $\End_\XX(T_+\oplus E)$.
  \item Check if $\End_\XX(T_+\oplus E)$ is a 2-representation-finite
    algebra, see Proposition \ref{prop:tau2-stable}.
  \end{enumerate}
  This process, although lengthy, is straightforward. We illustrate
  part of it for $\XX$ of weight type $(2,4,4)$. The case were $\XX$
  has type $(2,3,6)$ is completely analogous. The cases we need to
  deal with are stated in Table \ref{table:cases}.

  \begin{table}
    \begin{center}
      \begin{tabular}{cc}
        \hline\noalign{\smallskip}
        \multicolumn{2}{c}{$\XX(2,4,4)$} \\
        \noalign{\smallskip}\hline\noalign{\smallskip}
        $\YY$ & $\Delta$ \\
        \noalign{\smallskip}\hline\noalign{\smallskip}
        (1,4,4) & $\tilde{A}_{4,4}$ \\
        (1,2,4) & $\tilde{A}_{2,4}$ \\
        (1,2,2) & $\tilde{A}_{2,2}$\\
        (2,2,2) & $\tilde{D}_4$ \\    
        (2,2,4) & $\tilde{D}_6$ \\
        \noalign{\smallskip}\hline
      \end{tabular}\qquad
      \begin{tabular}{ccccc}
        \hline\noalign{\smallskip}
        \multicolumn{2}{c}{$\XX(2,3,6)$} \\
        \noalign{\smallskip}\hline\noalign{\smallskip}
        $\YY$ & $\Delta$ \\
        \noalign{\smallskip}\hline\noalign{\smallskip}
        (1,3,6) & $\tilde{A}(3,6)$ \\
        (1,3,3) & $\tilde{A}(3,3)$ \\
        (2,3,3) & $\tilde{E}_6$ \\
        & \phantom{$\tilde{D}_4$} \\
        & \phantom{$\tilde{D}_4$} \\
        \noalign{\smallskip}\hline
      \end{tabular}
    \caption{Possible weight types for $\YY$ and the extended Dynkin
      type $\Delta$ of the associated hereditary algebra.}
    \label{table:cases}
    \end{center}
  \end{table}

  $\YY(1,4,4)$ In this case we have $\Delta=\tilde{A}_{4,4}$. The only
  possibility for the Gabriel quiver of $\End_\YY(T_+)$ is a
  non-oriented cycle with 8 vertices. Moreover, it must have 4 arrows
  pointing in clockwise direction and 4 arrows pointing in
  counterclockwise direction.  These are the quivers highlighted in
  the algebras $C_1$ and $C_2$ Figure \ref{fig:244-multi}. All of
  these algebras are 2-representation-finite algebras, as can be
  readily verified by checking that their 3-preprojective algebras are
  selfinjective, see Proposition
  \ref{prop:charactetrization-2RF-selfinjective-QP}. The reader can
  verify that they indeed arise by the procedure described in Theorem
  \ref{thm:multiextension}.

  The cases $\YY(1,2,4)$ and $\YY(1,2,2)$ are completely analogous,
  The resulting 2-representation finite algebras correspond to $C_3$
  and $C_4$ respectively in Figure \ref{fig:244-multi}.
  
  $\YY(2,2,2)$ In this case we have $\Delta=\tilde{D}_4$.  The only
  possible endomorphism algebras of preprojective tilting $H$-modules
  are orientations of the Dynkin diagram of type $\tilde{D}_6$
  \[
  \begin{tikzcd}[column sep=small, row sep=small]
    & 1\dar[path] \\
    1\rar[path] & 2\rar[path]\dar[path] & 1\\
    & 1
  \end{tikzcd}
  \]
  or the canonical algebra of type $(2,2,2)$, see Happel-Vossieck's
  list \cite{happel_minimal_1983}.  The only quivers which admit an
  action of order 2 which does not fixes any line bundle summand of
  $T_+$ are the ones highlighted in algebras $C_1^\op$, $C_5$ and
  $C_6$ in Figure \ref{fig:244-multi}, corresponding to symmetric
  orientations of the Dynkin diagram above.

  $\YY(2,2,4)$ We have $\Delta=\tilde{D}_6$. In this case (and only in
  this case), there are algebras in Happel-Vossieck's list which have
  an action of order $p/2$ which do not extend to a
  2-representation-finite algebra. One way to rule out these algebras
  before doing any computation is to determine the action induced by
  $\tau^2$ on the Auslander-Reiten quiver of $\vect\YY$, which has
  shape $\ZZ\tilde{D}_6$, see Theorem
  \ref{thm:techincal-general}\eqref{it:Y-H} and \cite[Table
  1]{lenzing_weighted_2011}. We prove below that this
  action is given by rotation along the horizontal axis of $\vect\YY$,
  corresponding to the action given by degree shift by
  $\vecy+2\vecw_\YY\in\LL(2,2,4)$. Taking this into account, according
  to \cite{happel_minimal_1983} the possible endomorphism algebras of
  preprojective tilting $H$-modules are given in Figure
  \ref{fig:HV-D6}.
  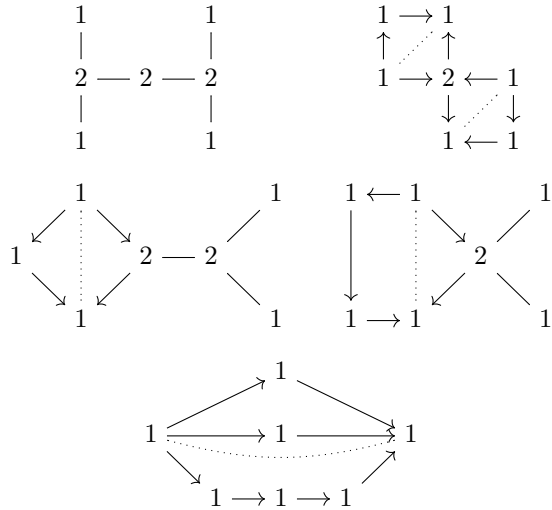
\begin{figure}
    \begin{center}
      \begin{tabular}{cc}
        \begin{tikzcd}[ampersand replacement=\&, column sep=small, row
          sep=small]
          1\dar[path] \& \& 1\dar[path]\\
          2 \rar[path]\dar[path] \& 2\rar[path] \& 2\dar[path]\\
          1 \& \& 1
        \end{tikzcd}
        & 
        \begin{tikzcd}[ampersand replacement=\&, column sep=small, row
          sep=small]
          1 \rar \& 1\dlar[dotted,path]\\
          1\uar\rar \& 2\dar\uar \& 1\lar\dar\dlar[dotted,path]\\
          \& 1 \& 1\lar
        \end{tikzcd} \\
        \begin{tikzcd}[ampersand replacement=\&, column sep=small, row
          sep=small]
          \&1\dlar\drar\ar[dotted,path]{dd}\&\&\& 1\\
          1\drar\&\&2\rar[path]\dlar\&2\urar[path]\drar[path]\\
          \&1\&\&\&1
        \end{tikzcd}
        & \begin{tikzcd}[ampersand replacement=\&, column sep=small, row
          sep=small]
          1 \ar{dd} \& 1 \lar\drar\ar[dotted,path]{dd} \& \& 1\\
          \& \&2 \dlar\urar[path]\drar[path]\\
          1\rar \& 1 \& \& 1        
        \end{tikzcd} \\
        \multicolumn{2}{c}{
          \begin{tikzcd}[ampersand replacement=\&, column sep=small, row
            sep=small]
            \&\&1\ar{rrd}\\
            1\ar{rru}\drar\ar{rr}\ar[out=-15,in=-165,path,dotted]{rrrr}\&\&1\ar{rr}\&\&1\\
            \&1\rar\&1\rar\&1\urar
          \end{tikzcd}}
      \end{tabular}
    \end{center}
    \caption{Endomorphism algebras of preprojective tilting modules of
      type $\tilde{D}_6$ with dimension vectors. The orientation of
      simple edges can be chosen arbitrarily and the relations
      indicate that the sum of all paths with the corresponding
      endpoints is zero.}
    \label{fig:HV-D6}
  \end{figure}
  The only quivers in Figure \ref{fig:HV-D6} which are stable under
  rotation by $\pi$ along the horizontal axis of the Auslander-Reiten
  quiver of $\vect\YY$ are the ones highlighted in algebras $C_2^\op$,
  $C_3^\op$, $C_4^\op$, $C_5^\op$, $C_7$, $C_7^\op$ and $C_8$ in
  Figure \ref{fig:244-multi}.

  Finally, let us prove that the action induced by $\tau^2$ on
  $\vect\YY$ is indeed given by rotation along the horizontal axis.
  This also serves as an example of the method to
  compute $\End_\XX(T)$ using Theorem \ref{thm:multiextension}.

  % Example Multiextension

  Let $\XX$ be a weighted projective line of tubular type $(2,4,4)$.
  Let $X$ be an exceptional simple sheaf concentrated at $\lambda_2$
  and set $T_0:=X\oplus\tau^2X$.  We write $\LL(2,2,4)=\langle\vecx,\vecy,\vecz,\vecc\mid
  2\vecx=2\vecy=4\vecz=\vecc\rangle$ and $\vecw=\vecw_\YY$.  Also, we
  put $R:=R(2,2,4)$

  Let $T_+\in\vect\YY$ be the tilting bundle indicated in Figure
  \ref{fig:multiextension-ex} and $E=S\oplus S'$ be the direct sum of
  the two exceptional simple sheaves in $\coh\YY$ concentrated at the
  point $\lambda_2$.
  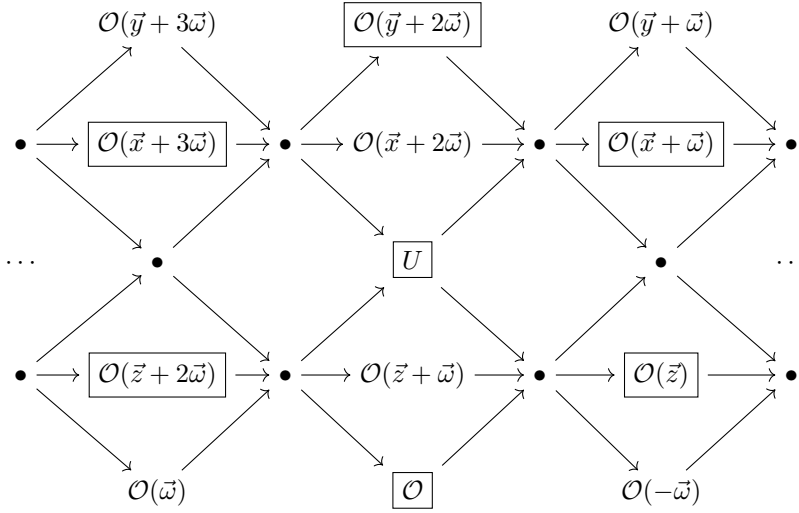
\begin{figure}
    \begin{center}
      \begin{tikzcd}[column sep=small, row sep=normal]
        {}&\OO(\vecy+3\vecw)\drar&&\tikzbox{\OO(\vecy+2\vecw)}\drar&&\OO(\vecy+\vecw)\drar&\\
        \bullet\urar\rar\drar&\tikzbox{\OO(\vecx+3\vecw)}\rar&\bullet\drar\urar\rar&\OO(\vecx+2\vecw)\rar&\bullet\drar\rar\urar&\tikzbox{\OO(\vecx+\vecw)}\rar&\bullet\\
        \cdots&\bullet\urar\drar&&\tikzbox{U}\drar\urar&&\bullet\drar\urar&\cdots\\
        \bullet\urar\drar\rar&\tikzbox{\OO(\vecz+2\vecw)}\rar&\bullet\urar\drar\rar&\OO(\vecz+\vecw)\rar&\bullet\rar\urar\drar&\tikzbox{\OO(\vecz)}\rar&\bullet\\
        &\OO(\vecw)\urar&&\tikzbox{\OO}\urar&&\OO(-\vecw)\urar
      \end{tikzcd}
    \end{center}
    \caption{Tilting bundle in $\vect\YY$. Black vertices indicate vector bundles
      of rank 2.}
    \label{fig:multiextension-ex}
  \end{figure}
  Put $T:=T_+\oplus T_0$. By Theorem \ref{thm:multiextension} we have
  an isomorphism of $K$-algebras
  \[
  \End_\XX(T)\cong
  \begin{bmatrix}
    \End_\YY(T_+) & \Hom_\YY(T_+,E) \\
    0 & \End_\YY(E)\cong K\times K
  \end{bmatrix}.
  \]
  We need to compute $\End_\YY(T_+,E)$.  For this, recall from Theorem
  \ref{thm:props-cohX}\eqref{it:simples} that we have short exact
  sequences
  \begin{equation}
    \label{eq:X}
    \begin{tikzcd}
      0\rar&\OO_\YY(-\vecy)\rar&\OO_\YY\rar&S\rar&0
    \end{tikzcd}
  \end{equation}
  and
  \begin{equation}
    \label{eq:Y}
    \begin{tikzcd}
      0\rar&\OO_\YY(-2\vecy)\rar&\OO_\YY(-\vecy)\rar&S'\rar&0.
    \end{tikzcd}
  \end{equation}
  We shall compute $\dim\Hom_\YY(\OO(\vecz),S)$ and
  $\dim\Hom_\YY(\OO(\vecz),S')$ by applying the functor
  $\Hom_\YY(\OO(\vecz),-)$. Before that, it its convenient to make
  some preliminary calculations.

  Firstly, by Theorem \ref{thm:props-cohX}\eqref{it:morphisms} we have
  $\Hom_\YY(\OO(\vecz),\OO)=0$ and using Serre duality we obtain
  \[
  \Ext_\YY^1(\OO(\vecz),E)\cong D\Hom_\XX(E,\OO(\vecz+\vecw))=0,
  \]
  since there are no non-zero morphisms from a torsion sheaf to a
  vector bundle. Secondly, again by Theorem
  \ref{thm:props-cohX}\eqref{it:morphisms} and Serre duality we have
  \[
  \Ext_\YY^1(\OO(\vecz),\OO)\cong D\Hom_\YY(\OO,\OO(\vecz+\vecw))\cong
  R_{\vecx-\vecy}=0.
  \]
  Simlarly, we have
  \[
  \Ext_\YY^1(\OO(\vecz),\OO(-\vecy))\cong
  D\Hom_\YY(\OO(-\vecy),\OO(\vecz+\vecw))=R_{\vecw+\vecy+\vecz}=R_{\vecx}.
  \]
  In addition, we have
  \[
  \Hom_\YY(\OO(\vecz),\OO)=R_{-\vecz}=0\quad\Hom_\YY(\OO(\vecz),\OO(-\vecy))=R_{-\vecy-\vecz}=0.
  \]
  Hence, applying the functor $\Hom_\YY(\OO_\YY(\vecz),-)$ to the
  sequences \eqref{eq:X} and \eqref{eq:Y} yields exact sequences
  \[
  \begin{tikzcd}[column sep=small]
    0\rar&\Hom_\YY(\OO(\vecz),S)\rar&\Ext_\YY^1(\OO(\vecz),\OO(-\vecy))\rar&0
  \end{tikzcd}
  \]
  and
  \[
  \begin{tikzcd}[column sep=small]
    0\rar&\Hom_\YY(\OO(\vecz),S')\rar&\Ext_\YY^1(\OO(\vecz),\OO(-2\vecy))\rar&\Ext_\YY^1(\OO(\vecz),\OO(-\vecy))\rar&0
  \end{tikzcd}
  \]
  Then, by Theorem \ref{thm:props-cohX}\eqref{it:morphisms} we have
  \begin{align*}
    \dim\Hom_\YY(\OO(\vecz),S)=&\dim\Ext_\YY^1(\OO(\vecz),\OO(-\vecy))\\
    =&\dim\Hom_\YY(\OO(-\vecy),\OO(\vecz+\vecw))\\
    =&\dim R_{\vecw+\vecz+\vecy}\\
    =&\dim R_{\vecx}=1
  \end{align*}
  and
  \begin{align*}
    \dim\Hom_\YY(\OO(\vecz),S')=&\dim
    \Ext_\YY^1(\OO(\vecz),\OO(-2\vecy))-\dim\Ext_\YY^1(\OO(\vecz),\OO(-\vecy))\\
    =&\dim\Hom_\YY(\OO(-2\vecy),\OO(\vecz+\vecw))-1\\
    =&\dim R_{2\vecy+\vecz+\vecw}-1\\
    =&\dim R_{\vecx+\vecy}-1=0.
  \end{align*}
  A similar argument shows that
  \[
  \Hom_\YY(\OO(\vecx+\vecw),S)=0
  \]
  and
  \[
  \dim\Hom_\YY(\OO(\vecx+\vecw),S')=1.
  \]
  Proceeding in the same fashion, the reader can verify the equalities
  \begin{eqnarray*}
    &\dim\Hom_\YY(\OO(\vecx+3\vecw),S)=0,\quad\dim\Hom_\YY(\OO(\vecx+3\vecw),S')=1,\\
    &\dim\Hom_\YY(\OO(\vecy+2\vecw),S)=0,\quad\dim\Hom_\YY(\OO(\vecy+2\vecw),S')=1,\\    
    &\dim\Hom_\YY(\OO(\vecz+2\vecw),S)=1,\quad\dim\Hom_\YY(\OO(\vecz+2\vecw),S')=0,\\    
    &\dim\Hom_\YY(\OO,S)=1,\quad\dim\Hom_\YY(\OO,S')=0.
  \end{eqnarray*}
  
  It remains to compute $\dim\Hom_\YY(U,S)$ and
  $\dim\Hom_\YY(U,S')$. We have an exact sequence
  \[
  \begin{tikzcd}
    0\rar&\OO(\vecw)\rar&U\rar&\OO(\vecz)\rar&0.
  \end{tikzcd}
  \]
  Applying the contravariant functor $\Hom_\YY(-,S)$ yields an exact
  sequence
  \[
  \begin{tikzcd}[column sep=tiny]
    0\rar&\Hom_\YY(\OO(\vecz),S)\rar&\Hom_\YY(U,S)\rar&\Hom_\YY(\OO(\vecw),S)\rar&\Ext_\YY^1(\OO(\vecz),S)=0.
  \end{tikzcd}
  \]
  We already know that $\dim\Hom_\YY(\OO(\vecz),S)=1$. Proceeding as
  before, applying the functor $\Hom_\YY(\OO(\vecw),-)$ to the short
  exact sequence \eqref{eq:X}, we can show that
  $\dim\Hom_\YY(\OO(\vecw),S)=0$. Hence $\dim\Hom_\YY(U,S)=1$. We can
  show that $\dim\Hom_\YY(U,S')=1$ in a similar manner.

  It follows, by a suitable change of basis of $\End_\YY(T_+)$, that
  the quiver with relations of $\End_\XX(T)^\op$ is given by
  \[
  \begin{tikzcd}[column sep=small]
    \OO\rar&\OO(\vecz)\rar\dlar[path,dotted]&S\dar[path,dotted]\\
    \OO(\vecz+2\vecw)\rar\uar&U\dar\uar&\OO(\vecx+3\vecw)\lar\dar\\
    S'\uar[path,dotted]&\OO(\vecx+\vecw)\lar\urar[path,dotted]&\OO(\vecy+2\vecw)\lar
  \end{tikzcd}
  \]
  where each relation is a zero relation or a commutative relation.

  Using Proposition \ref{prop:charactetrization-2RF-selfinjective-QP}
  it is easy to check that $\End_\XX(T)$ is a 2-representation-finite
  algebra.  Therefore $T$ is a $\tau^2$-stable tilting sheaf, see
  Proposition \ref{prop:tau2-stable}.  Then, we can see in Figure
  \ref{fig:multiextension-ex} that the only action of order 2 on the
  Auslander-Reiten quiver of $\vect\YY$ which fixes $T_+$ given by
  rotation by $\pi$ along the horizontal axis, which can be
  interpreted as degree shift by $\vecy+2\vecw_\YY$. This
  concludes the proof of the theorem.
\end{proof}

As a consequence of Theorem \ref{thm:classification-tau2X} we obtain
the following classification results.

\begin{theorem}
  \label{thm:2-rf-der-can}
  Let $\XX$ be a weighted projective line and basic $T$ a tilting complex in
  $\Db(\XX)=\Db(\cohX)$. Then $\End_{\Db(\XX)}(T)$, is a
  2-representation-finite algebras if and only if $\End_{\Db(\XX)}(T)$
  is one of the algebras in Figures \ref{fig:2222-2-APR},
  \ref{fig:244-2RF} and \ref{fig:236-2RF}.  Moreover, this determines
  $T$ up to an autoequivalence of $\Db(\cohX)$.
\end{theorem}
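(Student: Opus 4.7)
The plan is to derive Theorem \ref{thm:2-rf-der-can} from Theorem \ref{thm:classification-tau2X} by using 2-APR-tilting to reduce an arbitrary tilting complex to a $\tau^2$-stable tilting sheaf, and then showing that the 2-APR-tilting graph generated from the sheaves listed in Figures \ref{fig:2222-2-APR}, \ref{fig:244-multi}, \ref{fig:236-multi} exhausts precisely the algebras in Figures \ref{fig:2222-2-APR}, \ref{fig:244-2RF}, \ref{fig:236-2RF}.

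For the ``only if'' direction, suppose $\End_{\Db(\XX)}(T)$ is 2-representation-finite. By Proposition \ref{prop:reduction-to-cohX}, we may apply iterated 2-APR-tilting to $T$ in order to obtain a tilting sheaf $E \in \cohX$ whose endomorphism algebra is again 2-representation-finite, and by Proposition \ref{prop:tau2-stable} the sheaf $E$ is $\tau^2$-stable. Then Theorem \ref{thm:classification-tau2X} tells us that $\End_\XX(E)$ appears in Figures \ref{fig:2222-2-APR}, \ref{fig:244-multi}, or \ref{fig:236-multi}; in particular $\XX$ has tubular type $(2,2,2,2;\lambda)$, $(2,4,4)$ or $(2,3,6)$. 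Reversing this procedure shows that $\End_{\Db(\XX)}(T)$ is obtained from one of these sheaf-level algebras by a finite sequence of 2-APR-(co)tilts, and by Theorem \ref{thm:2-APR-2-RF} every algebra along such a chain is 2-representation-finite.

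For the ``if'' direction and to complete the classification, I would start from each of the $\tau^2$-stable tilting sheaf algebras classified in Theorem \ref{thm:classification-tau2X}, compute the associated graded quiver with potential via Theorem \ref{thm:tildeLambda}, and iteratively perform 2-APR-tilts using the combinatorial description of Theorem \ref{thm:2-APR-QPs} (mutation at sinks of degree-zero arrows and dually at sources for cotilting). Each sink/source encountered produces a new 2-representation-finite algebra by Theorem \ref{thm:2-APR-2-RF}. Continuing until the procedure closes up on itself yields the full 2-APR-(co)tilting connected component of each $\tau^2$-stable seed, and I would verify that the resulting list of algebras is exactly what is drawn in Figures \ref{fig:2222-2-APR}, \ref{fig:244-2RF}, \ref{fig:236-2RF}, with the thick edges matching the 2-APR mutations. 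In the tubular case $(2,2,2,2;\lambda)$ all tilting complexes are $\tau^2$-stable since $\tau^2 = \id_\XX$, so no iteration is actually needed.

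The ``moreover'' clause — that the endomorphism algebra determines $T$ up to an autoequivalence of $\Db(\cohX)$ — follows in two steps. At the sheaf level, Theorem \ref{thm:classification-tau2X} together with Proposition \ref{prop:normal-position} places every $\tau^2$-stable tilting sheaf in normal position, and the combinatorial rigidity of the perpendicular category $T_0^\perp \cong \coh\YY$ (Theorem \ref{thm:techincal-general}) together with the fact that $T_+$ is a tilting bundle with prescribed endomorphism algebra over the tame hereditary category $\coh\YY$ forces $T$ to be unique up to the degree-shift action of $\LL$, which is part of $\Aut \Db(\cohX)$. For a general 2-representation-finite tilting complex $T$, the iterated 2-APR-tilting chain from $T$ to some sheaf $E$ reconstructed above is uniquely determined by the underlying sequence of projective summands involved in the mutation path through the figures, so $T$ is determined by $\End_{\Db(\XX)}(T)$ up to the same autoequivalence ambiguity.

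The main obstacle is the combinatorial bookkeeping in the middle step: verifying that the 2-APR-mutation graph starting from each $\tau^2$-stable seed is finite and closes up exactly as pictured, without producing any new algebras. This is not conceptually hard — each step is routine application of Theorem \ref{thm:2-APR-QPs} — but requires systematically tracking all tubular types and all orbits of 2-APR-tilts, which is the content displayed in Figures \ref{fig:2222-2-APR}, \ref{fig:244-2RF}, \ref{fig:236-2RF}.
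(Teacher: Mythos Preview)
Your argument for the first claim matches the paper's: reduce via Proposition~\ref{prop:reduction-to-cohX} to a $\tau^2$-stable tilting sheaf, apply Theorem~\ref{thm:classification-tau2X}, and then observe that Figures~\ref{fig:244-2RF} and~\ref{fig:236-2RF} are exactly the 2-APR-(co)tilting closure of the sheaf-level algebras. The paper states this in one sentence and leaves the combinatorial verification implicit, just as you do.

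The difference is in the ``moreover'' clause. The paper simply invokes \cite[Thm.~3.2]{lenzing_exceptional_2002}, whereas you attempt a direct argument through the perpendicular category and the 2-APR chain. That argument has real gaps. First, knowing $\End_\YY(T_+)$ does not by itself pin down $T_+$ inside $\vect\YY$ up to the $\LL$-action: over a tame hereditary category there can be several preprojective tilting modules with isomorphic endomorphism algebras not related by any obvious shift, and you have not supplied a mechanism that rules this out. Second, the 2-APR chain from a complex $T$ down to a sheaf $E$ is not uniquely determined by $\End_{\Db(\XX)}(T)$ alone; two complexes with isomorphic endomorphism algebras could in principle follow different mutation paths to non-isomorphic sheaves, so your reconstruction step does not close. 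In fact the clean route here is the general one underlying the cited reference: if $T$ and $T'$ are tilting complexes with $\End_{\Db(\XX)}(T)\cong\End_{\Db(\XX)}(T')=:A$, then each of them induces a triangle equivalence $\Db(\cohX)\simeq\Db(\mod A)$, and composing one with a quasi-inverse of the other produces an autoequivalence of $\Db(\cohX)$ carrying $T'$ to $T$. No case analysis is needed.
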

\begin{proof}
  The first claim follows immediately from Proposition
  \ref{prop:reduction-to-cohX} and Theorem
  \ref{thm:classification-tau2X}, since \ref{fig:244-2RF} and
  \ref{fig:236-2RF} are all algebras that can be obtained by
  2-APR-(co)tilting from the algebras in Figures \ref{fig:244-2-APR}
  and \ref{fig:236-2-APR}. The second claim is a standard application
  of \cite[Thm. 3.2]{lenzing_exceptional_2002}.
\end{proof}

\begin{theorem}
  \label{thm:t2-cpx}
  Let $T$ be a basic complex in $\Db(\cohX)$. Then, $T$ is
  $\tau^2$-stable if and only if $\End_{\Db(\XX)}(T)$ is one of the
  algebras in Figures \ref{fig:2222-2-APR}, \ref{fig:244-2-APR}, or
  \ref{fig:236-2-APR}. Moreover, this determines $T$ up to an
  autoequivalence of $\Db(\cohX)$.
\end{theorem}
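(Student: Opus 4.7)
The plan is to combine the structural tools already in place---Proposition \ref{prop:tau2-2RF}, Proposition \ref{prop:reduction-to-cohX}, and Theorem \ref{thm:classification-tau2X}---with a $\tau^2$-equivariant enhancement of 2-APR-tilting. By Proposition \ref{prop:tau2-2RF}, a basic tilting complex $T\in\Db(\cohX)$ is $\tau^2$-stable if and only if $\End_{\Db(\XX)}(T)$ is 2-homogeneous 2-representation-finite, and by Theorem \ref{thm:2-rf-der-can} such an endomorphism algebra must already appear in Figures \ref{fig:2222-2-APR}, \ref{fig:244-2RF} or \ref{fig:236-2RF}. So the problem reduces to singling out the 2-homogeneous ones and establishing the uniqueness statement.

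For the \emph{only if} direction, I would start with a basic $\tau^2$-stable tilting complex $T$ and reduce to a tilting sheaf inside $\cohX$ by following the induction on shift-length used in the proof of Proposition \ref{prop:reduction-to-cohX}, but performed $\tau^2$-equivariantly: whenever an indecomposable summand $T_\ell$ of highest shift is chosen for 2-APR-tilting, its $\tau^2$-translate $\tau^2 T_\ell$ is also a summand (by $\tau^2$-stability), lies in the same shift degree (because $\tau$ preserves $\cohX$), and should be 2-APR-tilted simultaneously. The independence of the two tilts is guaranteed by the vanishing $\Hom_{\Db(\XX)}(T_\ell,\tau^2 T_\ell)=0$, which follows from rigidity of $T$ and Serre duality exactly as in Lemma \ref{lemma:T0-semisimple}. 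One then checks, as in Proposition \ref{prop:reduction-to-cohX}, that the new complex is a $\tau^2$-stable tilting complex of strictly smaller shift-length; the induction terminates at a $\tau^2$-stable tilting sheaf, classified by Theorem \ref{thm:classification-tau2X}.

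For the \emph{if} direction and to populate Figures \ref{fig:244-2-APR} and \ref{fig:236-2-APR}, I would start from the tilting sheaves listed in Theorem \ref{thm:classification-tau2X} and iterate $\tau^2$-equivariant 2-APR-tilts at $\tau^2$-orbits of sink (resp. source) vertices, computing the resulting graded quivers with potential via Theorem \ref{thm:2-APR-QPs}. Theorem \ref{thm:2-APR-2-RF} preserves the 2-representation-finite property, a direct inspection of the Nakayama permutation shows that 2-homogeneity is preserved as well, and by Proposition \ref{prop:tau2-2RF} the resulting complexes are $\tau^2$-stable. Comparing with the list of Theorem \ref{thm:2-rf-der-can} then confirms that these $\tau^2$-equivariant orbits account for all the algebras in Figures \ref{fig:2222-2-APR}, \ref{fig:244-2-APR}, \ref{fig:236-2-APR}; the case $(2,2,2,2;\lambda)$ is automatic since $\tau^2=\mathrm{id}_\XX$ there, so every tilting complex is $\tau^2$-stable.

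The uniqueness-up-to-autoequivalence assertion follows, as in the proof of Theorem \ref{thm:2-rf-der-can}, from Lenzing's theorem on exceptional sequences \cite[Thm. 3.2]{lenzing_exceptional_2002}. The main obstacle is the $\tau^2$-equivariant reduction in the second paragraph: one has to verify that at every step the summand chosen for mutation is genuinely paired with a \emph{distinct} $\tau^2$-translate (if $\tau^2 T_\ell\cong T_\ell$, Lemma \ref{lemma:T0-semisimple} would already force $T_\ell$ to be an exceptional simple sheaf in degree zero, contradicting $\ell>0$) and that the two simultaneous 2-APR-tilts commute at the level of $\Db(\cohX)$---routine but essential bookkeeping, aided by the global-dimension-two vanishing $\Ext^{\geq 3}=0$ that is already exploited in Proposition \ref{prop:reduction-to-cohX}.
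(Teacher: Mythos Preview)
Your first paragraph is already the paper's complete proof: Proposition~\ref{prop:tau2-2RF} identifies $\tau^2$-stability of a tilting complex with 2-homogeneous 2-representation-finiteness of its endomorphism algebra, Theorem~\ref{thm:2-rf-der-can} lists all 2-representation-finite derived-canonical algebras, and one extracts the 2-homogeneous ones by direct inspection of that list; the uniqueness clause is inherited verbatim from Theorem~\ref{thm:2-rf-der-can}. Nothing further is required.

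The $\tau^2$-equivariant reduction you develop in the later paragraphs is therefore unnecessary, and two of its supporting claims are in fact false. First, rigidity of $T$ does not yield $\Hom_{\Db(\XX)}(T_\ell,\tau^2 T_\ell)=0$: since $\tau^2 T_\ell$ is a summand of $T$, this space is a direct summand of $\End_{\Db(\XX)}(T)$, and it is certainly nonzero whenever $T_\ell\cong\tau^2 T_\ell$. Second, Lemma~\ref{lemma:T0-semisimple} is a statement about torsion sheaves and says nothing about vector-bundle summands; in type $(2,2,2,2;\lambda)$, where $\tau^2=\id_\XX$, \emph{every} indecomposable satisfies $\tau^2 T_\ell\cong T_\ell$, so your argument ruling this out cannot work. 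If you did want a $\tau^2$-equivariant version of Proposition~\ref{prop:reduction-to-cohX}, no orbit-by-orbit bookkeeping is needed anyway: the 2-APR-tilt there is performed at the \emph{entire} highest-degree part $T_\ell$ (not at a chosen indecomposable), and since $\tau$ preserves cohomological degree, $T_\ell$ is itself $\tau^2$-stable whenever $T$ is, whence $\tau^{-1}T_\ell\cong\tau T_\ell$ and the resulting complex $T'$ is automatically $\tau^2$-stable.
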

\begin{proof}
  By Theorem \ref{thm:2-rf-der-can}, the algebra $\End_\XX(T)$ can be
  obtained from one of the algebras in Figures \ref{fig:2222-2-APR},
  \ref{fig:244-multi} or \ref{fig:236-multi} by iterated
  2-APR-(co)-tilting. By Proposition \ref{prop:tau2-stable}, we have
  that $T$ is $\tau^2$-stable is a 2-homogeneous
  2-representation-finite algebra. These are precisely the algebras in
  Figures \ref{fig:2222-2-APR}, \ref{fig:244-2-APR}, or
  \ref{fig:236-2-APR}.
\end{proof}

\begin{theorem}
  \label{thm:classification-ct-can}
  Let $T$ be a basic $\tau^2$-stable tilting sheaf in $\cohX$.  Then
  the cluster-tilted algebra $\End_\C(T)$ is isomorphic to the
  Jacobian algebra associated to one of the quivers with potential in
  Figures \ref{fig:2222}, \ref{fig:244} or \ref{fig:236}, and all of
  the Jacobian algebras associated to one of these quivers with
  potential arise in this way.
\end{theorem}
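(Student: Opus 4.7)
The plan is to combine Theorem \ref{thm:classification-tau2X}, which classifies the algebras $\End_\XX(T)$ for $\tau^2$-stable tilting sheaves, with the explicit description of the 3-preprojective algebra provided by Theorem \ref{thm:tildeLambda} and the identification of $\End_\C(T)$ with $\Pi_3(\End_\XX(T))$ from Proposition \ref{prop:EndC-Pi3}. Concretely, given a basic $\tau^2$-stable tilting sheaf $T$, set $\Lambda := \End_\XX(T)$. Theorem \ref{thm:classification-tau2X} tells us that $\Lambda$ is one of the algebras in Figures \ref{fig:2222-2-APR}, \ref{fig:244-multi} or \ref{fig:236-multi}; Proposition \ref{prop:tau2-2RF} guarantees that $\Lambda$ has global dimension at most $2$, so Theorem \ref{thm:tildeLambda} applies and presents $\Pi_3(\Lambda)$ as the Jacobian algebra of the graded quiver with potential $(\tilde{Q}, W, d)$, where $\tilde{Q}$ is obtained from the Gabriel quiver of $\Lambda$ by adjoining an arrow $r_i^*$ opposite to each relation $r_i$ of a minimal presentation, and $W = \sum_i r_i r_i^*$.

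The forward direction then reduces to a case-by-case verification that, for each $\Lambda$ in the aforementioned figures, the associated $(\tilde{Q}, W)$ matches one of the quivers with potential in Figures \ref{fig:2222}, \ref{fig:244} or \ref{fig:236}. This is routine if lengthy: one reads off the commutativity and zero relations from the pictures (and the canonical relations involving the parameter $\lambda$ in the $(2,2,2,2;\lambda)$ case), draws the dual arrows, and checks that $\sum_i r_i r_i^*$ coincides with the potential ``(sum of clockwise cycles) minus (sum of counter-clockwise cycles)'' described in the target captions. The converse implication is then essentially automatic: given a quiver with potential from Figures \ref{fig:2222}, \ref{fig:244} or \ref{fig:236}, its truncated Jacobian algebra in the sense of Definition \ref{def:truncated-Jacobian-algebra}, obtained by killing the dual arrows of degree one, is one of the algebras listed in Theorem \ref{thm:classification-tau2X} and therefore is realised as $\End_\XX(T)$ for some basic $\tau^2$-stable tilting sheaf $T$; applying Proposition \ref{prop:EndC-Pi3} and Theorem \ref{thm:tildeLambda} again recovers the given quiver with potential as a presentation of $\End_\C(T)$.

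The main obstacle is the sheer amount of bookkeeping in the case analysis, since one must treat each configuration of Figures \ref{fig:2222-2-APR}, \ref{fig:244-multi} and \ref{fig:236-multi} separately, identify a minimal generating set of relations (needed for Theorem \ref{thm:tildeLambda} to apply cleanly), and compare against the target figure. A secondary concern is specific to the $(2,2,2,2;\lambda)$ case: here the canonical relations depend on $\lambda$, so one must confirm that the $\lambda$-dependence in $W = \sum_i r_i r_i^*$ agrees with the conventions of Figure \ref{fig:2222} and verify the six-fold invariance of the resulting Jacobian algebra noted in its caption.
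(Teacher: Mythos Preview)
Your proposal is correct and follows essentially the same route as the paper's proof: classify $\Lambda=\End_\XX(T)$ via Theorem~\ref{thm:classification-tau2X}, identify $\End_\C(T)\cong\Pi_3(\Lambda)$ via Proposition~\ref{prop:EndC-Pi3}, and present $\Pi_3(\Lambda)$ as a Jacobian algebra via Theorem~\ref{thm:tildeLambda}. The only difference is cosmetic: where you sketch the case-by-case matching of $(\tilde{Q},W)$ with the target figures and the converse via truncated Jacobian algebras, the paper simply outsources this verification to \cite[Secs.~5.1, 9.2 and 9.3]{herschend_selfinjective_2011}.
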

\begin{proof}
  Let $T$ be a basic $\tau^2$-stable tilting sheaf in $\cohX$ and set
  $\Lambda=\End_\XX(T)$.  It follows from Theorem
  \ref{thm:classification-tau2X} that $\Lambda$ is isomorphic to one
  of the algebras in Figures \ref{fig:2222-2-APR}, \ref{fig:244-multi}
  or \ref{fig:236-multi}.  Then, by Proposition \ref{prop:EndC-Pi3}
  there exist an isomorphism $\End_\C(T)\cong \Pi_3(\Lambda)$.  By
  Theorem \ref{thm:tildeLambda}, we have that $\Pi_3(\Lambda)$ is
  isomorphic to the Jacobian algebra to one of the quivers with
  potential in Figures \ref{fig:2222}, \ref{fig:244} or \ref{fig:236}.
  
  Conversely, each Jacobian algebra associated to one of the quivers
  with potential in Figures \ref{fig:2222}, \ref{fig:244} or
  \ref{fig:236} is of the form $\Pi_3(\Lambda)$ for some $\Lambda$ in
  Figures \ref{fig:2222-2-APR}, \ref{fig:244-multi} or
  \ref{fig:236-multi}, see \cite[Secs. 5.1, 9.2 and
  9.3]{herschend_selfinjective_2011}.  The theorem follows.
\end{proof}

\begin{acknowledgements}
The author wishes to thank Prof. H. Lenzing for sharing his notes on
the classification of $\tau^2$-stable tilting sheaves (Theorem
\ref{thm:classification-tau2X}), where this article found its origin.
This thanks are extended to L. Demonet for motivating discussions on
$G$-equivariant categories, which did not find a place in the final
version of this article.  Finally, the author wishes to acknowledge
Prof. O. Iyama who suggested the author to work on this problems and
for the interesting discussions regarding higher Auslander-Reiten theory.
\end{acknowledgements}

\bibliographystyle{abbrv} \bibliography{zotero}

\end{document}